\newtheorem{assumptionA}{A-\hspace{-1.2mm}}
\theoremstyle{plain}
\newtheorem{lemma}{Lemma}
\newtheorem{theorem}{Theorem}
\newtheorem{remark}{Remark}
\newtheorem{cor}{Corollary}
\begin{document}
\title[Explicit Milstein-type Scheme]{On Milstein approximations with varying coefficients: the case of super-linear diffusion coefficients}

\author[C. Kumar]{Chaman Kumar}
\address[Chaman Kumar]{Stat-Math Unit, Indian Statistical Institute, Delhi}
\email{rkchaman@isid.ac.in}

\author[S. Sabanis]{Sotirios Sabanis}
\address[Sotirios Sabanis]{School of Mathematics, University of Edinburgh, Edinburgh}
\email{s.sabanis@ed.ac.uk}

\begin{abstract}

\bigskip
A new class of explicit Milstein schemes, which approximate stochastic differential equations (SDEs) with superlinearly growing drift and diffusion coefficients, is proposed in this article. It is shown, under very mild conditions, that these explicit schemes converge in $\mathcal L^p$ to the solution of the corresponding SDEs with optimal rate.

\noindent {\it AMS subject classifications}: Primary 60H35; secondary 65C30.
\end{abstract}

\maketitle

\section{Introduction}

Following the approach of \cite{sabanis2015}, we extend the techniques of constructing explicit approximations to the solutions of SDEs with super-linear coefficients in order to develop Milstein-type schemes with optimal rate of (strong) convergence.

Recent advances in the area of numerical approximations of such non-linear SDEs have produced new Euler-type schemes, e.g. see  \cite{hutzenthaler2012, sabanis2013, tretyakov2013, sabanis2015, hutzenthaler2015, dareiotis2015}, which are explicit in nature and hence computationally more efficient than their implicit counterparts. High-order schemes have also been developed in this direction. In particular, Milstein-type (order 1.0) schemes for SDEs with super-linear drift coefficients have been studied in \cite{wanga2013} and in \cite{kumar2015} with the latter article extending the results to include L\'evy noise, i.e. discontinuous paths. Furthermore, both drift and diffusion coefficients are allowed to grow super-linearly in \cite{zhang2014} and in \cite{Kruse et al.}. The latter reference has significantly relaxed the assumptions on the regularity of SDE coefficients by using the notions of C-stability and B-consistency. More precisely, the authors in \cite{Kruse et al.} produced optimal rate of convergence results in the case where the drift and diffusion coefficients are only (once) continuously differentiable functions. Our results, which were developed at around the same time as the latter reference by using different methodologies, are obtained under the same relaxed assumptions with regards to the regularity that is required of the SDE coefficients. Crucially, we relax further the moments bound requirement which is essential for practical applications.

We illustrate the above statement by considering an example which appears in \cite{Kruse et al.}, namely the one-dimensional SDE given by
\begin{align*}
dx_t=x_t(1-x_t^2)dt+\sigma(1-x_t^2)dw_t, \qquad \forall \, t \in [0,\,T],
\end{align*}
with initial value $x_0$ and a positive constant $\sigma$. Theorem \ref{thm:main} below yields that for $p_0=14$ (note that $\rho=2$) one obtains optimal rate of convergence in $\mathcal L^2$ (when $\sigma^2 \le \frac{2}{13}$ and $p_1>2$ such that $\sigma^2(p_1-1)\le 1$) whereas the corresponding result in \cite{Kruse et al.}, Table 1 in Section 8, requires  $p_0=18$ for their explicit (projective) scheme. The same requirement, i.e. $p_0=14$, as in this article is only achieved by the implicit schemes considered in \cite{Kruse et al.}.

Finally, we note that Theorem \ref{thm:main} establishes optimal rate of convergence results (under suitable assumptions) in $\mathcal L^p$ for $p>2$, which is, to the best of the authors' knowledge, the first such results in the case of SDEs with super-linear coefficients.

We conclude this section by introducing some notations which are used in this article. The Euclidean norm of a $d$-dimensional vector $b$ and the Hilbert-Schmidt norm of a $d\times m$ matrix $\sigma$ are denoted by $|b|$ and $|\sigma|$ respectively. The transpose of a matrix $\sigma$ is denoted  by $\sigma^*$. The $i$th element of  $b$ is denoted by $b^i$, whereas $\sigma^{(i,j)}$ and $\sigma^{(j)}$ stand for $(i,j)$-th element and $j$-th column of $\sigma$ respectively for every  $i=1,\ldots,d$ and $j=1,\ldots,m$. Further, $xy$ denotes the inner product of two $d$-dimensional vectors $x$ and $y$.  The notation $\lfloor a \rfloor$ stands for the integer part of a positive real number $a$. Let $D$ denote an operator such that for a function $f: \mathbb{R}^d \to \mathbb{R}^d$, $Df(.)$ gives a $d \times d$ matrix  whose $(i,j)$-th entry is $\frac{\partial f^i(.)}{\partial x^j}$ for every $i,j=1,\ldots,d$.  For every $j=1,\ldots,m$, let $\Lambda^j$ be an operator such that for a function $g: \mathbb{R}^d \to \mathbb{R}^{d \times m}$, $\Lambda^j g(.)$ gives a matrix of order $d \times m$ whose $(i,k)$-th entry is given by
$$
[\Lambda^j g(.)]_{(i,k)} := \sum_{u=1}^d g^{(u,j)}(.) \frac{\partial g^{(i,k)}(.)}{\partial x^u}
$$
for every $i=1,\ldots, d$, $k=1,\ldots,m$.

\section{Main Results} \label{sec:main:results}
Suppose $(\Omega, \{\mathscr{F}_t\}_{t \geq 0}, \mathscr{F}, P)$  is a complete filtered probability space satisfying the usual conditions, i.e. the filtration is  right continuous and $\mathscr{F}_0$ contains all $P$-null sets. Let $T>0$ be a fixed constant and  $(w_t)_{t \in [0, T]}$ denote an ${\mathbb{R}}^m-$valued  standard Wiener process. Further, suppose that $b(x)$ and $\sigma(x)$ are  $\mathscr{B}(\mathbb R^d)$-measurable functions with values in ${\mathbb{R}}^d$ and ${\mathbb{R}}^{d\times m}$ respectively. Moreover, $b(x)$ and $\sigma(x)$ are continuously differentiable in $x\in \mathbb{R}^d$. For the purpose of this article, the following $d$-dimensional SDE is considered,
\begin{align} \label{eq:sde}
x_t=& \xi + \int_{0}^t b(x_s)ds+\int_{0}^t \sigma(x_s)dw_s,
\end{align}
almost surely for any $t \in [0,T] $, where $\xi$ is an $\mathscr{F}_{0}$-measurable  random variable in $\mathbb{R}^d$.

Let $p_0, p_1 \geq 2$ and $\rho \geq 1$ (or $\rho=0$) are fixed constants. For the purpose of this article,  the following assumptions are made.


\begin{assumptionA} \label{as:sde:initial}
$E|\xi|^{p_0} < \infty$.
\end{assumptionA}

\begin{assumptionA} \label{as:sde:growth}
There exists a constant $L>0$ such that
$$
2xb(x)+ (p_0-1)|\sigma(x)|^2 \leq L(1+|x|^2)
$$
for any $x\in \mathbb{R}^d$.
\end{assumptionA}

\begin{assumptionA} \label{as:sde:lipschitz}
There exists a constant $L>0$ such that
$$
2(x-\bar{x})(b(x)-b(\bar{x}))+ (p_1-1)|\sigma(x)-\sigma(\bar{ x})|^2 \leq L|x-\bar{x}|^2
$$
for any $x, \bar{x} \in \mathbb{R}^d$.
\end{assumptionA}

\begin{assumptionA} \label{as:sde:lipschitz:b'}
There exists a constant $L>0$ such that
\begin{align*}
|D b(x)-D b(\bar{x})|  & \leq L (1+|x|+|\bar{x}|)^{\rho-1}|x-\bar{x}|
\end{align*}
for any $x, \bar{x} \in \mathbb{R}^d$.
\end{assumptionA}

\begin{assumptionA} \label{as:sde:lipschitz:sigma'}
There exists  a constant $L>0$ such that, for every $j=1,\ldots, m$,
\begin{align*}
|D\sigma^{(j)}(x)-D\sigma^{(j)}(\bar{x})|  & \leq L (1+|x|+|\bar{x}|)^{\frac{\rho-2}{2}}|x-\bar{x}|
\end{align*}
for any $x,\bar{x} \in \mathbb{R}^d$.
\end{assumptionA}


\begin{remark} \label{rem:poly:b}
Assumption A-\ref{as:sde:lipschitz:b'} means that there is  a constant $L>0$ such that
\begin{align*}
\Big|\frac{\partial b^i(x)}{\partial x^j}\Big| & \leq L(1+|x|)^{\rho}
\end{align*}
for any $x \in \mathbb{R}^d$ and for every $i,j=1,\ldots,d$. As a consequence, one also obtains that there exists a constant $L>0$ such that
\begin{align*}
|b(x)-b(\bar{x})| & \leq L (1+|x|+|\bar{x}|)^{\rho}|x-\bar{x}|
\end{align*}
for any $x , \bar{x} \in \mathbb{R}^d$. Moreover, this implies that $b(x)$ satisfies,
\begin{align*}
|b(x)|   & \leq L(1+|x|)^{\rho+1}
\end{align*}
for any $x \in \mathbb{R}^d$. Furthermore, due to Assumption A-\ref{as:sde:lipschitz:sigma'}, there exists a constant $L>0$ such that
\begin{align*}
\Big|\frac{\partial \sigma^{(i,j)}(x)}{\partial x^k}\Big|&\leq L(1+|x|)^{\frac{\rho}{2}}
\end{align*}
for any $x \in \mathbb{R}^d$ and for every $i,k=1,\ldots,d$, $j=1,\ldots,m$. Also, Assumption A-\ref{as:sde:lipschitz} implies
\begin{align*}
|\sigma(x)-\sigma(\bar{x})| & \leq L (1+|x|+|\bar{x}|)^\frac{\rho}{2} |x-\bar{x}| \notag
\end{align*}
for any $x, \bar{x} \in \mathbb{R}^d$.  Moreover, this means $\sigma(x)$ satisfies,
\begin{align*}
|\sigma(x)|&\leq L(1+|x|)^{\frac{\rho+2}{2}}
\end{align*}
for any $x  \in \mathbb{R}^d$. In addition, one notices that
\begin{align*}
|\Lambda^j \sigma(x)| &\leq L(1+|x|)^{\rho+1}
\end{align*}
for any $x \in \mathbb{R}^d$ and for every $j=1,\ldots,m$.
\end{remark}

For every $n \in \mathbb{N}$ and $x \in \mathbb{R}^d$, we define the following functions,
\begin{align*}
 b^n(x)& :=\frac{b(x)}{1+n^{-\theta}|x|^{2\rho \theta}},
 \\
\sigma^n(x)& :=  \frac{\sigma(x)}{1+n^{-\theta}|x|^{2\rho \theta}},
 \end{align*}
where $\theta \geq \frac{1}{2}$ and, similarly, for the purposes of establishing a new, explicit Milstein-type scheme, for every $j=1,\ldots,m$, we define
 \begin{align*}
\Lambda^{n,j}\sigma(x) & := \frac{ \Lambda^j \sigma(x)}{1+n^{-\theta}|x|^{2\rho \theta}}.
\end{align*}
\begin{remark} The case $\theta=1/2$ is studied in \cite{sabanis2015}, without the use of $\Lambda^{n,j}\sigma(x)$, as the aim is the formulation of a new  explicit Euler-type scheme. Throughout this article, $\theta$ is taken to be $1$, which corresponds to an order $1.0$ Milstein scheme. By taking different values of $\theta=1.5,2,2.5, \ldots$ and by appropriately controlling higher order terms, one can obtain optimal rate of convergence results for higher order schemes by adopting the approach developed in \cite{sabanis2015} and in this article.
\end{remark}

Moreover, let us also define
\begin{align*}
 \sigma_1^n(t, x)&:=\sum_{j=1}^{m}\int_{\kappa(n,t)}^t   \Lambda^{n,j}\sigma(x) dw_r^j =\sum_{j=1}^{m} \Lambda^{n,j}\sigma(x) (w_t^j-w_{\kappa(n,t)}^j) \notag
\end{align*}
and hence set
$$
\tilde \sigma^n(t,x):= \sigma^n(x)  + \sigma_1^n(t,x)
$$
almost surely for any $x \in \mathbb{R}^d$, $n \in \mathbb{N}$  and $t  \in [0,T] $.
\begin{remark} \label{rem:bn:sign:}
Due to Remark \ref{rem:poly:b}, one immediately notices that
\begin{align*}
|b^n(x)| &\leq \min( K n^{\frac{1}{2}}(1+|x|),|b(x)|)
\\
|\sigma^n(x)|^2 & \leq \min( K n^{\frac{1}{2}}(1+|x|^2),|\sigma(x)|^2)
\\
|\Lambda^{n,j}\sigma(x)| & \leq \min( K n^{\frac{1}{2}}(1+|x|),|\Lambda(x)|)
\end{align*}
for every $n \in \mathbb{N}, x \in \mathbb{R}^d$ and $j=1,\ldots,m$.
\end{remark}
Let us define $\kappa(n,t):=\lfloor nt \rfloor/ n$ for any $t \in [0,T]$. We propose below a new variant of the Milstein scheme with coefficients which vary according to the choice of the time step. The aim is to approximate solutions of non-linear SDEs such as equation \eqref{eq:sde}. The new explicit scheme is given below
\begin{align} \label{eq:milstein:diffusion}
x_t^n=& \xi + \int_{0}^t  b^n(x_{\kappa(n,s)}^n)ds+\int_{0}^t \tilde{\sigma}^n(s, x_{\kappa(n,s)}^n)dw_s
\end{align}
almost surely for any $t \in [0,T]$.
\begin{remark}
In the following, $K>0$ denotes a generic constant that varies from place to place, but is always independent of $n \in \mathbb{N}$.
\end{remark}

The main result of this article is stated in the following theorem.
\begin{theorem} \label{thm:main}
Let Assumptions A-\ref{as:sde:initial} to A-\ref{as:sde:lipschitz:sigma'} be satisfied with $p_0\ge 2(3\rho+1)$ and $p_1>2$. Then, the explicit Milstein-type scheme \eqref{eq:milstein:diffusion} converges in $\mathcal L^p$ to the true solution of SDE \eqref{eq:sde} with a rate of convergence equal to $1.0$, i.e. for every $n \in \mathbb{N}$
\begin{align} \label{rate-result}
\sup_{0 \leq t \leq T}E|x_t-x_t^n|^p \leq K n^{-p},
\end{align}
when $p=2$. Moreover, if $p_0\ge 4(3\rho+1)$, then \eqref{rate-result} is true for any $p \leq \frac{p_0}{3\rho+1}$ provided that $p<p_1$.
\end{theorem}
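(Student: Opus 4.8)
The plan is to carry out the now-standard three-stage argument for taming-type schemes, as developed in \cite{sabanis2015}, suitably adapted to the Milstein setting. First, the moments of the true solution: Assumption A-\ref{as:sde:growth} is a coercivity (monotone-growth) condition, so applying It\^o's formula to $|x_t|^{p_0}$ (using A-\ref{as:sde:initial}) together with Gronwall's inequality yields $\sup_{0\le t\le T}E|x_t|^{p_0}<\infty$; combined with the polynomial bounds of Remark \ref{rem:poly:b}, this controls $E|b(x_t)|^q$, $E|\sigma(x_t)|^q$ and $E|\Lambda^j\sigma(x_t)|^q$ for $q$ up to the relevant fraction of $p_0$.

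The second stage is the uniform-in-$n$ moment bound for the scheme. Using the truncation bounds of Remark \ref{rem:bn:sign:} --- notably $|b^n(x)|\le Kn^{1/2}(1+|x|)$, $|\sigma^n(x)|^2\le Kn^{1/2}(1+|x|^2)$ and the analogous bound for $\Lambda^{n,j}\sigma$ --- I would derive a one-step recursion of the form $E\big[|x^n_{\kappa(n,t)+1/n}|^{p_0}\,\big|\,\mathscr F_{\kappa(n,t)}\big]\le (1+K/n)\,|x^n_{\kappa(n,t)}|^{p_0}+K/n$; the key points are that the dangerous stochastic increments are conditionally centred, that the exponent $2\rho\theta$ in the denominators of $b^n,\sigma^n,\Lambda^{n,j}\sigma$ is chosen precisely so that A-\ref{as:sde:growth} survives the truncation, and that the extra Milstein term $\sigma_1^n$ only contributes lower-order terms. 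Iterating gives $\sup_{n}\sup_{0\le t\le T}E|x_t^n|^{p_0}<\infty$, and a similar but easier computation yields the one-step estimate $E|x_t^n-x^n_{\kappa(n,t)}|^q\le Kn^{-q/2}$ for $q$ in the admissible range (here $\sigma_1^n$ contributes at the even smaller order $n^{-1}$, via the polynomial bound on $\Lambda^{n,j}\sigma$).

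For the error analysis, set $e_t^n:=x_t-x_t^n$ and apply It\^o's formula to $|e_t^n|^p$. The drift discrepancy is split as $b(x_s)-b^n(x^n_{\kappa(n,s)})=[b(x_s)-b(x_s^n)]+[b(x_s^n)-b(x^n_{\kappa(n,s)})]+[b(x^n_{\kappa(n,s)})-b^n(x^n_{\kappa(n,s)})]$, and the diffusion discrepancy is split analogously, except that the middle piece is handled by a one-term It\^o--Taylor expansion of $\sigma(x_s^n)$ about $x^n_{\kappa(n,s)}$: its leading stochastic part equals $\sum_{j}\Lambda^{n,j}\sigma(x^n_{\kappa(n,s)})(w_s^j-w^j_{\kappa(n,s)})$ up to truncation, which cancels exactly against $\sigma_1^n(s,x^n_{\kappa(n,s)})$, while the remainder is $O(n^{-1})$ in $\mathcal L^p$ thanks to A-\ref{as:sde:lipschitz:sigma'} controlling the increments of $D\sigma^{(j)}$. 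The diagonal contribution $\tfrac{p}{2}|e_s^n|^{p-2}\big(2e_s^n(b(x_s)-b(x_s^n))+(p-1)|\sigma(x_s)-\sigma(x_s^n)|^2\big)$ is absorbed using A-\ref{as:sde:lipschitz}, which needs $p-1\le p_1-1$; the $(1+\varepsilon)$ inflation of the diffusion square when it is separated from the remainder and cross terms forces the strict inequality $p<p_1$. The time-regularity pieces $b(x_s^n)-b(x^n_{\kappa(n,s)})$ and their diffusion analogue are Taylor expanded (remainders controlled by A-\ref{as:sde:lipschitz:b'} and A-\ref{as:sde:lipschitz:sigma'} together with the polynomial Lipschitz bounds of Remark \ref{rem:poly:b}) and then estimated by H\"older's inequality and the bounds of the first two stages; crucially, since these terms are integrated in $ds$ for the drift and sit under a conditionally centred stochastic integral for the diffusion, a conditioning-on-$\mathscr F_{\kappa(n,s)}$ argument recovers order $n^{-1}$ rather than $n^{-1/2}$. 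The truncation errors $b-b^n$ and $\sigma-\sigma^n$ are $O\big(n^{-1}(1+|x|)^{3\rho+1}\big)$ by Remark \ref{rem:poly:b}, so after Young's inequality they contribute $Kn^{-p}E(1+|x_s^n|)^{(3\rho+1)p}$; this moment accounting, together with the Cauchy--Schwarz splittings applied to the cross terms, is what produces the thresholds $p_0\ge 2(3\rho+1)$ for $p=2$ and $p_0\ge 4(3\rho+1)$ for general $p\le p_0/(3\rho+1)$. Collecting everything gives $E|e_t^n|^p\le K\int_0^t E|e_s^n|^p\,ds+Kn^{-p}$, and Gronwall's inequality yields \eqref{rate-result}.

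I expect the main obstacles to be, first, the uniform-in-$n$ moment bound of order $p_0$ for the truncated Milstein scheme (the $|x|^{2\rho\theta}$ terms in the denominators make the one-step recursion delicate, especially keeping the Milstein term from spoiling the coercivity), and second, the bookkeeping of the It\^o--Taylor remainder of the diffusion term in the error analysis: one must simultaneously check that the leading stochastic term is matched exactly by $\sigma_1^n$, bound the remainder in $\mathcal L^p$ at order $n^{-1}$ using only once-differentiability of $\sigma$ with H\"older-continuous derivative (A-\ref{as:sde:lipschitz:sigma'}), and track precisely which moments of $x_s^n$ are consumed --- it is this last point that pins down the admissible range of $p$ and the required size of $p_0$.
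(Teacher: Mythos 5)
Your proposal follows essentially the same route as the paper: moment bounds for the true solution, a uniform-in-$n$ $p_0$-th moment bound for the tamed scheme exploiting the fact that the common denominator preserves the coercivity in A-\ref{as:sde:growth}, the one-step estimate of order $n^{-1/2}$, the three-way splittings of the drift and diffusion discrepancies, exact cancellation of the leading stochastic increment of $\sigma(x_s^n)-\sigma(x^n_{\kappa(n,s)})$ against $\sigma_1^n$ with the remainder controlled at order $n^{-1}$ via the H\"older continuity of $D\sigma^{(j)}$ in A-\ref{as:sde:lipschitz:sigma'} (the paper does this through a mean-value-theorem lemma rather than an It\^o--Taylor expansion, but the content is identical), absorption of the diagonal term by A-\ref{as:sde:lipschitz} with the $(1+\epsilon)$ inflation forcing $p<p_1$, and Gronwall. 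The discrete one-step recursion you propose for the moment bound is a legitimate alternative to the paper's continuous-time It\^o/Gronwall argument and changes nothing essential.

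The one step that, as literally described, would not go through is your claim that ``a conditioning-on-$\mathscr F_{\kappa(n,s)}$ argument recovers order $n^{-1}$'' for the term $\int_0^t|e_s^n|^{p-2}e_s^n\,Db(x^n_{\kappa(n,s)})\int_{\kappa(n,s)}^s\tilde\sigma^n\,dw_r\,ds$. The stochastic increment is indeed conditionally centred, but the weight $|e_s^n|^{p-2}e_s^n$ is $\mathscr F_s$-measurable, not $\mathscr F_{\kappa(n,s)}$-measurable, so the expectation does not vanish by conditioning alone. The paper's Lemma \ref{lem:a-tilde a:rate:new} handles this by freezing the weight at $\kappa(n,s)$ (that frozen term does vanish) and then applying It\^o's formula to $|e_s^n|^{p-2}e_s^{n,k}$ on $[\kappa(n,s),s]$ to expand the correction; the resulting five terms $T_{31},\dots,T_{35}$ consume the bulk of the proof, and it is precisely their estimation (products of $|e_r^n|^{p-2}$ or $|e_r^n|^{p-3}$ with polynomially growing factors, split by Young and H\"older against the $p_0$-th moments) that forces the stronger hypothesis $p_0\ge 4(3\rho+1)$ when $p>2$, while $p=2$ escapes because the weight $|e_s^n|^{p-2}$ degenerates to $1$. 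Your sketch attributes the threshold to the truncation-error bookkeeping instead; that part only needs $p\le p_0/(3\rho+1)$. So the mechanism you name is the right one, but you should be aware that making it rigorous requires this additional It\^o expansion of the weight, and that this is where the second moment condition in the theorem actually originates.
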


\begin{remark}
One observes immediately that for the case $\rho=0$, one recovers, due to Assumptions A-\ref{as:sde:initial} to A-\ref{as:sde:lipschitz:sigma'} and Theorem \ref{thm:main}, the classical Milstein framework and results (with some improvement perhaps as the coefficients of \eqref{eq:sde} are required only to be once continuously differentiable in this article).
\end{remark}

\begin{remark} \label{split}
In order to ease notation, it is chosen not to explicitly present the calculations for, and thus it is left as an exercise to the reader, the case where the drift and/or the diffusion coefficients contain parts which are Lipschitz continuous and grow at most linearly (in $x$). In such a case, the analysis for these parts follows closely the classical approach and the main theorem/results of this article remain true. Furthermore, note that such a statement applies also in the case of non-autonomous coefficients in which typical assumptions for the smoothness of coefficients in $t$  are considered (as, for example, in \cite{Kruse et al.}).
\end{remark}

The details of the proof of the main result, i.e. Theorem \ref{thm:main}, and of the required lemmas are given in the next two sections.
\section{Moment Bounds}
It is a well-known fact that due to Assumptions A-\ref{as:sde:initial} to A-\ref{as:sde:lipschitz}, the $p_0$-th moment of the true solution of \eqref{eq:sde} is bounded uniformly in time.
\begin{lemma} \label{lem:mb:true}
Let Assumptions A-\ref{as:sde:initial} to A-\ref{as:sde:lipschitz} be satisfied. Then, there exists a  unique solution $(x_t)_{t \in [0,T]}$ of SDE \eqref{eq:sde} and the following holds,
$$
\sup_{0 \leq t \leq T }E|x_t|^{p_0} \leq K.
$$
\end{lemma}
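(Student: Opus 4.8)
The plan is to establish the result in two stages: first existence and uniqueness of a strong solution, then the uniform moment bound. For existence and uniqueness, I would observe that Assumptions A-\ref{as:sde:lipschitz:b'} and A-\ref{as:sde:lipschitz:sigma'} (as unpacked in Remark \ref{rem:poly:b}) give local Lipschitz continuity of $b$ and $\sigma$ with polynomially growing Lipschitz constants, so a standard localization/truncation argument yields a unique local (maximal) solution up to an explosion time $\tau_\infty$. The moment estimate will then serve the dual purpose of showing $\tau_\infty = \infty$ (no explosion) and providing the stated bound. So effectively everything reduces to the a priori estimate on $\sup_{0\le t\le T} E|x_t|^{p_0}$, carried out on the localized solution $x_{t\wedge\tau_R}$ and then passed to the limit $R\to\infty$ via Fatou.

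For the a priori bound, the natural tool is It\^o's formula applied to the function $V(x) = (1+|x|^2)^{p_0/2}$. First I would compute
\begin{align*}
d(1+|x_t|^2)^{p_0/2} = {}& \tfrac{p_0}{2}(1+|x_t|^2)^{\frac{p_0}{2}-1}\Big(2x_tb(x_t) + |\sigma(x_t)|^2\Big)\,dt \\
& + \tfrac{p_0}{2}\Big(\tfrac{p_0}{2}-1\Big)(1+|x_t|^2)^{\frac{p_0}{2}-2}\cdot 2|\sigma(x_t)^*x_t|^2\,dt \\
& + p_0(1+|x_t|^2)^{\frac{p_0}{2}-1}x_t\sigma(x_t)\,dw_t.
\end{align*}
Using $|\sigma(x_t)^*x_t|^2 \le |x_t|^2|\sigma(x_t)|^2 \le (1+|x_t|^2)|\sigma(x_t)|^2$, the two drift terms combine (with room to spare, since $\frac{p_0}{2}-1 < \frac{p_0}{2}$) into something bounded by $\tfrac{p_0}{2}(1+|x_t|^2)^{\frac{p_0}{2}-1}\big(2x_tb(x_t) + (p_0-1)|\sigma(x_t)|^2\big)$. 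Now Assumption A-\ref{as:sde:growth} applies directly: this is at most $\tfrac{p_0}{2}(1+|x_t|^2)^{\frac{p_0}{2}-1}L(1+|x_t|^2) = \tfrac{Lp_0}{2}(1+|x_t|^2)^{p_0/2}$. Taking expectations on the stopped process kills the martingale term, and Gr\"onwall's inequality gives $E(1+|x_{t\wedge\tau_R}|^2)^{p_0/2} \le e^{Lp_0 T/2}\,E(1+|\xi|^2)^{p_0/2}$, which is finite by Assumption A-\ref{as:sde:initial}.

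The only genuine subtlety — and the step I would be most careful about — is the interplay between the localization and the stochastic integral: one must check that the local martingale $\int_0^{t\wedge\tau_R}p_0(1+|x_s|^2)^{\frac{p_0}{2}-1}x_s\sigma(x_s)\,dw_s$ is a true martingale with zero mean, which holds because on $[0,\tau_R]$ the integrand is bounded. After obtaining the uniform-in-$R$ bound, I would let $R\to\infty$; since $\tau_R\uparrow\tau_\infty$, the bound forces $P(\tau_\infty \le T)=0$ for every $T$, hence global existence, and Fatou's lemma transfers the estimate to $x_t$ itself, giving $\sup_{0\le t\le T}E|x_t|^{p_0}\le K$ with $K$ depending only on $L$, $p_0$, $T$ and $E|\xi|^{p_0}$. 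Uniqueness follows from the local Lipschitz property together with the already-established finiteness of moments (a standard stopping-time plus Gr\"onwall argument on $E|x_t-\bar x_t|^2$, or directly from Assumption A-\ref{as:sde:lipschitz}). Since this is a classical result, I would likely just cite a standard reference (e.g. in the spirit of Mao's monograph) rather than reproduce the full argument.
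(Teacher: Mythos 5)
Your proposal is correct and is precisely the classical argument that the paper itself defers to: the paper offers no proof of this lemma, simply citing Mao's monograph, and your localization--It\^o--Gr\"onwall--Fatou scheme (with the coercivity Assumption A-\ref{as:sde:growth} absorbing the combined drift and second-order terms, and local Lipschitzness from Remark \ref{rem:poly:b} giving uniqueness) is the standard proof found there. The computation with $V(x)=(1+|x|^2)^{p_0/2}$ and the combination $2xb(x)+(p_0-1)|\sigma(x)|^2$ is carried out correctly, so there is nothing to fix.
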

The proof of the above lemma can be found in many textbooks, e.g. see \cite{mao1997}. The following lemmas are required in order to allow one to obtain moment bounds for the new explicit scheme \eqref{eq:milstein:diffusion}.

\begin{remark}
Another useful observations is that for every fixed $n \in \mathbb{N}$ and due to Remark \ref{rem:bn:sign:}, the $p_0$-th moment of the new Milstein-type scheme \eqref{eq:milstein:diffusion} is bounded uniformly in time (as in the case of the classical Milstein scheme/framework with SDE coefficients which grow at most linearly). Clearly, one cannot claim at this point that such a bound is independent of $n$. However, the use of stopping times in the derivation of moment bounds henceforth can be avoided.
\end{remark}

\begin{lemma} \label{lem:Esign1}
Let Assumption A-\ref{as:sde:lipschitz:sigma'} be satisfied. Then,
$$
E|\sigma_1^n(t, x_{\kappa(n,t)}^n)|^{p_0} \leq K (1+ E|x_{\kappa(n,t)}^n|^{p_0})
$$
for  any $t \in [0,T]$ and $n \in \mathbb{N}$.
\end{lemma}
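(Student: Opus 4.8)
The plan is to exploit the independence of the Wiener increment $w_t^j-w_{\kappa(n,t)}^j$ from $\mathscr{F}_{\kappa(n,t)}$, together with the ``order $n^{1/2}$'' bound on $\Lambda^{n,j}\sigma$ recorded in Remark \ref{rem:bn:sign:}, whose growth in $n$ is precisely cancelled by the $n^{-p_0/2}$ decay of the $p_0$-th moment of the increment. Note first that the quantity is finite for each fixed $n$ by the remark following Lemma \ref{lem:mb:true} (the scheme has finite $p_0$-th moments for fixed $n$), so the estimate below only needs to produce a constant $K$ independent of $n$.

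First I would write $\sigma_1^n(t,x)=\sum_{j=1}^m \Lambda^{n,j}\sigma(x)\,(w_t^j-w_{\kappa(n,t)}^j)$ and apply the triangle inequality for the Hilbert--Schmidt norm to get $|\sigma_1^n(t,x)|\le \sum_{j=1}^m |\Lambda^{n,j}\sigma(x)|\,|w_t^j-w_{\kappa(n,t)}^j|$. Then, by the power-mean (discrete Hölder) inequality $\big(\sum_{j=1}^m a_j\big)^{p_0}\le m^{p_0-1}\sum_{j=1}^m a_j^{p_0}$, one has
$$
E|\sigma_1^n(t,x_{\kappa(n,t)}^n)|^{p_0}\le m^{p_0-1}\sum_{j=1}^m E\Big[\,|\Lambda^{n,j}\sigma(x_{\kappa(n,t)}^n)|^{p_0}\,|w_t^j-w_{\kappa(n,t)}^j|^{p_0}\,\Big].
$$

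Next I would condition on $\mathscr{F}_{\kappa(n,t)}$. Since $x_{\kappa(n,t)}^n$ is $\mathscr{F}_{\kappa(n,t)}$-measurable (the scheme is adapted) while $w_t^j-w_{\kappa(n,t)}^j$ is independent of $\mathscr{F}_{\kappa(n,t)}$, the tower property gives $E[|w_t^j-w_{\kappa(n,t)}^j|^{p_0}\mid\mathscr{F}_{\kappa(n,t)}]=C_{p_0}(t-\kappa(n,t))^{p_0/2}\le C_{p_0}\,n^{-p_0/2}$. Pulling this deterministic factor out and then using the bound $|\Lambda^{n,j}\sigma(x)|^{p_0}\le K\,n^{p_0/2}(1+|x|)^{p_0}$ from Remark \ref{rem:bn:sign:}, the two powers of $n$ cancel exactly and one is left with $K(1+E|x_{\kappa(n,t)}^n|^{p_0})$, as claimed.

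There is essentially no genuine obstacle here; the only point requiring care is the choice of bound in Remark \ref{rem:bn:sign:}. Using instead the polynomial estimate $|\Lambda^j\sigma(x)|\le L(1+|x|)^{\rho+1}$ (valid by Remark \ref{rem:poly:b}) would leave a term of the form $E(1+|x_{\kappa(n,t)}^n|)^{p_0(\rho+1)}$, for which no bound is available at this stage; it is precisely the truncated bound $|\Lambda^{n,j}\sigma(x)|\le Kn^{1/2}(1+|x|)$ that both yields the correct power $|x|^{p_0}$ and keeps the constant independent of $n$.
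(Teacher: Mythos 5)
Your proof is correct and follows essentially the same route as the paper: both arguments rest on the truncated bound $|\Lambda^{n,j}\sigma(x)|\le Kn^{1/2}(1+|x|)$ from Remark \ref{rem:bn:sign:}, whose $n^{p_0/2}$ growth is cancelled by the $n^{-p_0/2}$ from the $p_0$-th moment of the increment over an interval of length at most $1/n$. The only cosmetic difference is that you compute the Gaussian moment directly by conditioning on $\mathscr{F}_{\kappa(n,t)}$, whereas the paper invokes the standard moment inequality for stochastic integrals together with H\"older's inequality; both are valid and yield the same estimate.
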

\begin{proof}
On using an elementary inequality of stochastic integrals and H\"older's inequality, one obtains
\begin{align*}
E|\sigma_1^{n}(t, x_{\kappa(n,t)}^n)|^{p_0} & = K E\Big|\sum_{j=1}^{m}\int_{\kappa(n,t)}^t \Lambda^{n,j} \sigma(x_{\kappa(n,s)}^n)dw_s^j\Big|^{p_0}
\\
& \leq K n^{-\frac{p_0}{2}+1} E \int_{\kappa(n,t)}^t |\Lambda^{n,j} \sigma(x_{\kappa(n,s)}^n)|^{p_0}  ds
\end{align*}
which due to Remark \ref{rem:bn:sign:} gives
\begin{align*}
E|\sigma_1^{n}(t, x_{\kappa(n,t)}^n)|^{p_0} & \leq K n^{-\frac{p_0}{2}+1} E \int_{\kappa(n,t)}^t n^{\frac{p_0}{2}}(1+|x_{\kappa(n,s)}^n|^{p_0})  ds
\end{align*}
and hence the proof completes.
\end{proof}
The following corollary is an immediate consequence of Lemma \ref{lem:Esign1} and Remark \ref{rem:bn:sign:}.
\begin{cor} \label{lem:tilde:sign:no:rate}
Let Assumption A-\ref{as:sde:lipschitz:sigma'} be satisfied. Then
$$
E|\tilde{\sigma}^n(t, x_{\kappa(n,t)}^n)|^{p_0} \leq K n^{\frac{p_0}{4}}(1+E|x_{\kappa(n,t)}^n|^{p_0})
$$
for any $n \in \mathbb{N}$ and $t \in [0,T]$.
\end{cor}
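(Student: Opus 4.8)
The plan is to exploit the decomposition $\tilde\sigma^n(t,x)=\sigma^n(x)+\sigma_1^n(t,x)$ directly and treat the two summands separately. By the elementary inequality $|a+b|^{p_0}\le 2^{p_0-1}(|a|^{p_0}+|b|^{p_0})$ one gets
\begin{align*}
E|\tilde\sigma^n(t,x_{\kappa(n,t)}^n)|^{p_0}\le K\big(E|\sigma^n(x_{\kappa(n,t)}^n)|^{p_0}+E|\sigma_1^n(t,x_{\kappa(n,t)}^n)|^{p_0}\big),
\end{align*}
so it suffices to bound each term by the right-hand side of the claimed inequality.

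For the first term I would invoke Remark \ref{rem:bn:sign:}, which gives $|\sigma^n(x)|^2\le Kn^{1/2}(1+|x|^2)$. Raising to the power $p_0/2$ and using $(1+|x|^2)^{p_0/2}\le K(1+|x|^{p_0})$ yields $|\sigma^n(x)|^{p_0}\le Kn^{p_0/4}(1+|x|^{p_0})$; taking expectations at $x=x_{\kappa(n,t)}^n$ produces the desired bound. For the second term I would simply apply Lemma \ref{lem:Esign1}, which already gives $E|\sigma_1^n(t,x_{\kappa(n,t)}^n)|^{p_0}\le K(1+E|x_{\kappa(n,t)}^n|^{p_0})$, and then note that since $n\in\mathbb N$ we have $n^{p_0/4}\ge 1$, so this is trivially dominated by $Kn^{p_0/4}(1+E|x_{\kappa(n,t)}^n|^{p_0})$. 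Adding the two estimates completes the proof.

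There is no real obstacle here: the statement is an immediate corollary, and the only mild care needed is to keep track of which power of $n$ is the dominant one (the $n^{p_0/4}$ coming from $\sigma^n$, as opposed to the $n$-independent bound for $\sigma_1^n$) and to absorb all numerical factors into the generic constant $K$, which by convention is independent of $n$.
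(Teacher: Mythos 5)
Your proof is correct and follows exactly the route the paper intends: the paper states the corollary is "an immediate consequence of Lemma \ref{lem:Esign1} and Remark \ref{rem:bn:sign:}", which is precisely your splitting of $\tilde\sigma^n$ into $\sigma^n$ (handled via the bound $|\sigma^n(x)|^2\le Kn^{1/2}(1+|x|^2)$ from the Remark) and $\sigma_1^n$ (handled via the Lemma). Nothing further is needed.
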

When $p_0=2$, one proceeds with the following lemma (which is important for the case $\rho=0$).
\begin{lemma} \label{lem:mbound}
Let Assumptions A-\ref{as:sde:initial} to A-\ref{as:sde:lipschitz:sigma'} be satisfied. Then, the explicit Milstein-type scheme \eqref{eq:milstein:diffusion} satisfies the following,
$$
\sup_{n \in \mathbb{N}} \sup_{0 \leq t \leq T}E|x_t^n|^{2} \leq K.
$$
\end{lemma}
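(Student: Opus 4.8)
The plan is to apply It\^o's formula to $|x_t^n|^2$ and estimate the resulting terms using the one-sided growth condition A-\ref{as:sde:growth}, keeping track only of terms that are linear in $E|x_s^n|^2$ plus a constant, so that Gr\"onwall's inequality closes the argument. First I would write, for the scheme \eqref{eq:milstein:diffusion},
\begin{align*}
E|x_t^n|^2 = E|\xi|^2 + E\int_0^t \Big( 2 x_s^n b^n(x_{\kappa(n,s)}^n) + |\tilde\sigma^n(s, x_{\kappa(n,s)}^n)|^2 \Big)\, ds,
\end{align*}
the stochastic integral having zero expectation because, for each fixed $n$, the scheme has finite second moments uniformly on $[0,T]$ (as noted in the remark following Lemma \ref{lem:mb:true}). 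The key decomposition is $x_s^n = x_{\kappa(n,s)}^n + (x_s^n - x_{\kappa(n,s)}^n)$: replacing $x_s^n$ by $x_{\kappa(n,s)}^n$ in the cross term $2x_s^n b^n(x_{\kappa(n,s)}^n)$ produces the "good" term $2 x_{\kappa(n,s)}^n b^n(x_{\kappa(n,s)}^n)$ (to which A-\ref{as:sde:growth} applies, since $b^n$ is $b$ divided by a positive factor $\ge 1$, giving $2 y b^n(y) \le L(1+|y|^2) - (p_0-1)|\sigma^n(y)|^2 \le L(1+|y|^2)$ when $p_0 \ge 1$), plus a remainder $2(x_s^n - x_{\kappa(n,s)}^n) b^n(x_{\kappa(n,s)}^n)$ that must be shown to be $O(n^{-1/2})$ in a suitable sense.

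Next I would estimate the one-step increment. Between grid points, $x_s^n - x_{\kappa(n,s)}^n = \int_{\kappa(n,s)}^s b^n(x_{\kappa(n,r)}^n)\,dr + \int_{\kappa(n,s)}^s \tilde\sigma^n(r, x_{\kappa(n,r)}^n)\,dw_r$; using Remark \ref{rem:bn:sign:} ($|b^n(x)| \le Kn^{1/2}(1+|x|)$) and Corollary \ref{lem:tilde:sign:no:rate} ($E|\tilde\sigma^n(r,x^n_{\kappa(n,r)})|^{2} \le Kn^{1/2}(1+E|x^n_{\kappa(n,r)}|^{2})$, taking $p_0 = 2$ there), together with the length $s - \kappa(n,s) \le n^{-1}$ of the interval and It\^o's isometry for the stochastic part, one gets
\begin{align*}
E|x_s^n - x_{\kappa(n,s)}^n|^2 \le K n^{-1}\Big(1 + \sup_{0 \le r \le s} E|x_{\kappa(n,r)}^n|^2\Big),
\end{align*}
where the $n^{1/2}$ factors are absorbed: the drift contributes $n \cdot n^{-2} \cdot (1+\cdots)$ and the diffusion $n^{1/2} \cdot n^{-1} \cdot (1+\cdots)$, both $O(n^{-1/2})$ or better. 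For the diffusion term in the main estimate, I would bound $|\tilde\sigma^n|^2 \le 2|\sigma^n(x_{\kappa(n,s)}^n)|^2 + 2|\sigma_1^n(s,x_{\kappa(n,s)}^n)|^2$; the first is controlled by $|\sigma(x_{\kappa(n,s)}^n)|^2 \le L(1+|x^n_{\kappa(n,s)}|^2)$ via A-\ref{as:sde:growth} (again using $p_0 \ge 1$), and the expectation of the second is handled by the $p_0=2$ version of Lemma \ref{lem:Esign1}, giving $K(1+E|x^n_{\kappa(n,s)}|^2)$. Collecting everything and applying Young's inequality to the remainder cross term $2(x_s^n - x_{\kappa(n,s)}^n)b^n(x^n_{\kappa(n,s)}^n)$ — bounding it by $|x_s^n - x_{\kappa(n,s)}^n|^2 + |b^n(x^n_{\kappa(n,s)}^n)|^2$ is too crude since $|b^n|^2$ carries an $n$; instead I would bound it by $n^{1/2}|x_s^n - x_{\kappa(n,s)}^n|^2 \cdot n^{-1/2} \cdot$ ... — more carefully, write it as $2(x_s^n - x_{\kappa(n,s)}^n)b^n$ and use Cauchy–Schwarz plus the one-step estimate: $E|(x_s^n-x_{\kappa(n,s)}^n)b^n| \le (E|x_s^n-x_{\kappa(n,s)}^n|^2)^{1/2}(E|b^n|^2)^{1/2} \le (Kn^{-1}(1+\sup E|x^n|^2))^{1/2}(Kn(1+\sup E|x^n|^2))^{1/2} = K(1+\sup_{0\le r\le s}E|x_{\kappa(n,r)}^n|^2)$, which is linear and $n$-independent. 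Hence
\begin{align*}
E|x_t^n|^2 \le K + K\int_0^t \sup_{0 \le r \le s} E|x_{\kappa(n,r)}^n|^2 \, ds,
\end{align*}
and since $\sup_{0\le r \le s}E|x^n_{\kappa(n,r)}|^2 \le \sup_{0\le r\le s}E|x^n_r|^2$ is nondecreasing in $s$, Gr\"onwall's lemma yields $\sup_{0\le t\le T}E|x_t^n|^2 \le Ke^{KT}$, uniformly in $n$.

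The main obstacle, and the point requiring the most care, is the treatment of the remainder cross term $2(x_s^n - x_{\kappa(n,s)}^n) b^n(x^n_{\kappa(n,s)}^n)$: naive bounds introduce a factor of $n$ from $|b^n|^2$ that cannot be absorbed, so one must exploit the $n^{-1}$ smallness of $E|x_s^n - x_{\kappa(n,s)}^n|^2$ to exactly cancel it, as sketched above. A secondary technical point is justifying that the stochastic integral in It\^o's formula is a true martingale (zero expectation) — this is where the remark about finite $n$-dependent moments of the scheme, together with the linear-growth bounds in Remark \ref{rem:bn:sign:}, is used, so that no localization by stopping times is needed.
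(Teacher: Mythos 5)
Your overall skeleton (It\^o's formula, the splitting $x^n_s = x^n_{\kappa(n,s)} + (x^n_s - x^n_{\kappa(n,s)})$, Gronwall) matches the paper's, but two steps as written do not go through. First, the bound $|\sigma(x)|^2 \le L(1+|x|^2)$ does \emph{not} follow from Assumption A-\ref{as:sde:growth} and is false in the setting of this paper: A-\ref{as:sde:growth} is a one-sided condition on the combination $2xb(x)+(p_0-1)|\sigma(x)|^2$, and the whole point here is that $\sigma$ may grow super-linearly (e.g.\ $b(x)=x-x^3$, $\sigma(x)=\sigma_0(1-x^2)$ satisfies A-\ref{as:sde:growth} while $|\sigma(x)|^2\sim x^4$). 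So you cannot first spend A-\ref{as:sde:growth} on $2x^n_{\kappa(n,s)}b^n(x^n_{\kappa(n,s)})$ alone and then control $|\sigma^n(x^n_{\kappa(n,s)})|^2$ separately; nor can you afford the factor $2$ in $|\tilde\sigma^n|^2 \le 2|\sigma^n|^2+2|\sigma_1^n|^2$, since for $p_0=2$ the coefficient on $|\sigma|^2$ in A-\ref{as:sde:growth} is exactly $1$. The paper expands $|\tilde\sigma^n|^2 = |\sigma^n|^2 + 2\sum_{i,j}\sigma^{n,(i,j)}\sigma_1^{n,(i,j)}+|\sigma_1^n|^2$ exactly, keeps $2x^n_{\kappa(n,s)} b^n(x^n_{\kappa(n,s)})+|\sigma^n(x^n_{\kappa(n,s)})|^2$ together as the single quantity to which A-\ref{as:sde:growth} applies (the taming denominator being $\ge 1$), notes that the cross term has zero expectation because $\sigma^{n,(i,j)}(x^n_{\kappa(n,s)})$ is $\mathscr{F}_{\kappa(n,s)}$-measurable while $\sigma_1^{n,(i,j)}(s,\cdot)$ is a stochastic integral over $[\kappa(n,s),s]$, and controls $E|\sigma_1^n|^2$ by Lemma \ref{lem:Esign1}.

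Second, your Cauchy--Schwarz treatment of the remainder cross term does not close. Before the uniform moment bound is established, the only one-step estimate available from Remark \ref{rem:bn:sign:} and Corollary \ref{lem:tilde:sign:no:rate} is $E|x^n_s-x^n_{\kappa(n,s)}|^2 \le Kn^{-1/2}(1+\sup E|x^n|^2)$; your own bookkeeping gives $O(n^{-1/2})$ for the diffusion contribution, yet your displayed one-step bound and the subsequent Cauchy--Schwarz computation use $O(n^{-1})$. With the honest $n^{-1/2}$, Cauchy--Schwarz against $(E|b^n|^2)^{1/2}\le (Kn)^{1/2}(1+\sup E|x^n|^2)^{1/2}$ yields a contribution of order $n^{1/4}(1+\sup E|x^n|^2)$, which Gronwall cannot absorb uniformly in $n$. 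The fix is again martingale orthogonality rather than Cauchy--Schwarz: substituting the scheme for the increment, the part $\int_{\kappa(n,s)}^s\tilde\sigma^n\,dw_r\cdot b^n(x^n_{\kappa(n,s)})$ has zero expectation since $b^n(x^n_{\kappa(n,s)})$ is $\mathscr{F}_{\kappa(n,s)}$-measurable, and only the drift part of the increment survives, for which $|b^n|\le Kn^{1/2}(1+|x|)$ applied twice against the interval length $n^{-1}$ gives an $n$-independent bound that is linear in $\sup E|x^n|^2$. Your observation that no stopping times are needed, and your Gronwall closing step, are correct.
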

\begin{proof}
By It\^o's formula,  one obtains
\begin{align*}
|x_t^n|^{2} & =|\xi|^{2} + 2 \int_0^t  x_s^n b^n(x_{\kappa(n,s)}^n)ds + 2 \int_0^t  x_s^n \tilde{\sigma}^n(s, x_{\kappa(n,s)}^n) dw_s
\\
&\quad +  \int_0^t  |\tilde{\sigma}^{n}(s, x_{\kappa(n,s)}^n)|^2 ds
\end{align*}
for any $t \in [0,T]$. Also, one uses $|z_1+z_2|^2 = |z_1|^2 + 2\sum_{i=1}^{d}\sum_{j=1}^{m}z_1^{(i,j)}z_2^{(i,j)}+|z_2|^2$ for any $z_1, z_2 \in \mathbb{R}^{d \times m}$ to estimate the last term of the above equation,
\begin{align*}
& E|x_t^n|^{2}  =E|\xi|^{2} +  E\int_0^t  2(x_s^n-x_{\kappa(n,s)}^n) b^n(x_{\kappa(n,s)}^n) ds
\\
& +  E\int_0^t  \{2x_{\kappa(n,s)}^n b^n(x_{\kappa(n,s)}^n)+ |\sigma^n(x_{\kappa(n,s)}^n)|^2 \} ds + E\int_0^t |\sigma^n_1(s, x_{\kappa(n,s)}^n)|^2 ds
\\
&  + 2 E \sum_{i=1}^{d}\sum_{j=1}^{m}\int_0^t \sigma^{n,(i,j)}(x_{\kappa(n,s)}^n)\sigma^{n,(i,j)}_1(s, x_{\kappa(n,s)}^n) ds
\end{align*}
which further implies due to Lemma \ref{lem:Esign1} (with $p_0=2$),
\begin{align*}
& E|x_t^n|^{2}  \leq E|\xi|^{2} +  2 E\int_0^t  \int^{s}_{\kappa(n,s)} b^n(x_{\kappa(n,r)}^n) dr b^n(x_{\kappa(n,s)}^n) ds
\\
&+  2 E\int_0^t  \int^{s}_{\kappa(n,s)}\tilde{\sigma}^n(r,x_{\kappa(n,r)}^n) dw_r b^n(x_{\kappa(n,s)}^n) ds
\\
& +  E\int_0^t  \frac{2x_{\kappa(n,s)}^n b(x_{\kappa(n,s)}^n)+ |\sigma(x_{\kappa(n,s)}^n)|^2 }{1+n^{-1}|x_{\kappa(n,s)}^n|^{2\rho+4}} ds + KE\int_0^t (1+|x_{\kappa(n,s)}^n|^2) ds
\\
&  + 2 E \sum_{i=1}^{d}\sum_{j=1}^{m}\int_0^t \sigma^{n,(i,j)}(x_{\kappa(n,s)}^n)\sum_{k=1}^{m}\int^{s}_{\kappa(n,s)} \Lambda^{n,k}\sigma^{(i,j)}(x_{\kappa(n,r)}^n)dw_r^k  ds
\end{align*}
and then on the application of Assumption A-\ref{as:sde:growth}, Remark \ref{rem:bn:sign:} (also notice that third and last terms are zero) gives
\begin{align*}
& \sup_{0 \leq s \leq t}E|x_s^n|^{2}  \leq E|\xi|^{2} +  K + K\int_0^t \sup_{0 \leq r \leq s}E|x_r^n|^2 ds<\infty
\end{align*}
for any $t \in [0,T]$. The proof completes on using Gronwall's lemma.
\end{proof}
When $p_0 \geq 4$, one proceeds with the following lemma.
\begin{lemma} \label{lem:mbound}
Let Assumptions A-\ref{as:sde:initial} to A-\ref{as:sde:lipschitz:sigma'} be satisfied. Then, the explicit Milstein-type scheme \eqref{eq:milstein:diffusion} satisfies the following,
$$
\sup_{n \in \mathbb{N}} \sup_{0 \leq t \leq T}E|x_t^n|^{p_0} \leq K.
$$
\end{lemma}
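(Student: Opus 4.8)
The plan is to apply It\^o's formula to $|x_t^n|^{p_0}$ and control the resulting terms so as to close a Gronwall-type inequality with constants independent of $n$. Writing $p_0 \ge 4$, It\^o's formula gives
\begin{align*}
|x_t^n|^{p_0} &= |\xi|^{p_0} + p_0 \int_0^t |x_s^n|^{p_0-2} x_s^n b^n(x_{\kappa(n,s)}^n)\, ds + p_0 \int_0^t |x_s^n|^{p_0-2} x_s^n \tilde\sigma^n(s,x_{\kappa(n,s)}^n)\, dw_s \\
&\quad + \frac{p_0}{2}\int_0^t |x_s^n|^{p_0-2} |\tilde\sigma^n(s,x_{\kappa(n,s)}^n)|^2\, ds + \frac{p_0(p_0-2)}{2}\int_0^t |x_s^n|^{p_0-4} |\tilde\sigma^{n,*}(s,x_{\kappa(n,s)}^n)x_s^n|^2\, ds.
\end{align*}
The martingale term vanishes in expectation (its integrability is guaranteed by the fixed-$n$ moment bound noted in the remark before Lemma~\ref{lem:Esign1}, which is why stopping times are unnecessary). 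The key device, exactly as in the $p_0=2$ case of the preceding lemma, is to replace $x_s^n$ by $x_{\kappa(n,s)}^n$ in the drift-type terms and to collect the ``diagonal'' combination $2x\, b(x) + (p_0-1)|\sigma(x)|^2$ so that Assumption~A-\ref{as:sde:growth} can be invoked. Concretely, I would write $x_s^n = x_{\kappa(n,s)}^n + (x_s^n - x_{\kappa(n,s)}^n)$ and expand $|\tilde\sigma^n|^2 = |\sigma^n|^2 + 2\sigma^n\!\cdot\!\sigma_1^n + |\sigma_1^n|^2$, so that the leading term becomes $p_0 |x_{\kappa(n,s)}^n|^{p_0-2}\big( x_{\kappa(n,s)}^n b(x_{\kappa(n,s)}^n) + \tfrac{p_0-1}{2}|\sigma(x_{\kappa(n,s)}^n)|^2\big)\big/(1+n^{-1}|x_{\kappa(n,s)}^n|^{2\rho+4})$, which by A-\ref{as:sde:growth} is bounded by $K|x_{\kappa(n,s)}^n|^{p_0-2}(1+|x_{\kappa(n,s)}^n|^2) \le K(1+|x_{\kappa(n,s)}^n|^{p_0})$.

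The remaining terms are error terms coming from the one-step increment $x_s^n - x_{\kappa(n,s)}^n$ and from $\sigma_1^n$; each must be shown to be $O(1+\sup_{r\le s}E|x_r^n|^{p_0})$ with an $n$-independent constant. For this I would repeatedly use: (i) Remark~\ref{rem:bn:sign:}, which gives $|b^n(x)| \le Kn^{1/2}(1+|x|)$, $|\sigma^n(x)|^2 \le Kn^{1/2}(1+|x|^2)$, $|\Lambda^{n,j}\sigma(x)| \le Kn^{1/2}(1+|x|)$ --- so each ``bad'' factor of $n^{1/2}$ is compensated by a factor $\int_{\kappa(n,s)}^s dr = O(n^{-1})$ (or, for stochastic increments, $E|w_s - w_{\kappa(n,s)}|^2 = O(n^{-1})$ after applying the Burkholder--Davis--Gundy / elementary stochastic-integral inequality as in Lemma~\ref{lem:Esign1}); (ii) Young's inequality to absorb mixed powers, e.g.\ $|x_{\kappa(n,s)}^n|^{p_0-2}(1+|x_{\kappa(n,s)}^n|) \cdot n^{-1/2} \lesssim 1 + |x_{\kappa(n,s)}^n|^{p_0}$ since the $n^{-1/2}$ kills the excess; (iii) Corollary~\ref{lem:tilde:sign:no:rate} and Lemma~\ref{lem:Esign1} to handle the $|x_s^n|^{p_0-2}|\tilde\sigma^n|^2$ and $|x_s^n|^{p_0-4}|\tilde\sigma^{n,*}x_s^n|^2$ terms after one more splitting $x_s^n = x_{\kappa(n,s)}^n + (x_s^n-x_{\kappa(n,s)}^n)$, using Young's inequality to trade the factor $|x_s^n - x_{\kappa(n,s)}^n|$ (which carries an $n^{-1/2}$ in $\mathcal L^{p_0}$, once that bound is available) against the high powers. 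A short preliminary estimate $E|x_s^n - x_{\kappa(n,s)}^n|^{p_0} \le K n^{-p_0/2}(1 + \sup_{r\le s}E|x_r^n|^{p_0}) \cdot n^{p_0/4}$-type bound --- or more simply $E|x_s^n - x_{\kappa(n,s)}^n|^{2}\le Kn^{-1/2}(1+\sup_{r \le s}E|x_{r}^n|^2)$ combined with the already-proven $\mathcal L^2$ bound --- would be extracted first from \eqref{eq:milstein:diffusion}, Remark~\ref{rem:bn:sign:} and Corollary~\ref{lem:tilde:sign:no:rate}.

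After all error terms are bounded, taking $\sup_{0\le s\le t}$ and using that $E|x_{\kappa(n,s)}^n|^{p_0} \le \sup_{0\le r\le s}E|x_r^n|^{p_0}$ yields
$$
\sup_{0\le s\le t} E|x_s^n|^{p_0} \le E|\xi|^{p_0} + K + K\int_0^t \sup_{0\le r\le s} E|x_r^n|^{p_0}\, ds,
$$
and Gronwall's lemma finishes the proof, with $K$ independent of $n$ by construction. The main obstacle I anticipate is bookkeeping rather than conceptual: one must check that in every error term the number of compensating factors of $n^{-1/2}$ (from time increments or Brownian increments) is at least as large as the number of spurious $n^{1/2}$ factors picked up from the truncated coefficients, \emph{and} that after applying Young's inequality no power of $|x^n|$ exceeding $p_0$ survives --- this is where the interplay between the growth exponent $\rho$, the truncation exponent $2\rho\theta = 2\rho$ in the denominator, and the moment order $p_0$ is delicate. (Note, however, that this particular lemma only claims the $p_0$-th moment bound itself, so the rate-sharp accounting $n^{-p}$ is not needed here; the cruder ``each increment costs $n^{-1/2}$'' suffices, exactly as in the $p_0=2$ lemma above.)
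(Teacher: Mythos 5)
Your overall architecture coincides with the paper's: It\^o's formula for $|x_t^n|^{p_0}$, Schwarz's inequality to merge the two second-order terms into $\frac{p_0(p_0-1)}{2}|x_s^n|^{p_0-2}|\tilde\sigma^n|^2$, the splittings $x_s^n=x_{\kappa(n,s)}^n+(x_s^n-x_{\kappa(n,s)}^n)$ and $|\tilde\sigma^n|^2=|\sigma^n|^2+2\sum_{i,j}\sigma^{n,(i,j)}\sigma_1^{n,(i,j)}+|\sigma_1^n|^2$ so that Assumption A-\ref{as:sde:growth} controls the frozen combination, Lemma \ref{lem:Esign1} for the $|\sigma_1^n|^2$ term, and Gronwall's lemma at the end. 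The gap is in your step (iii), i.e.\ in the two cross terms $E\int_0^t|x_s^n|^{p_0-2}(x_s^n-x_{\kappa(n,s)}^n)b^n(x_{\kappa(n,s)}^n)ds$ and $E\int_0^t|x_s^n|^{p_0-2}\sum_{i,j}\sigma^{n,(i,j)}(x_{\kappa(n,s)}^n)\sigma_1^{n,(i,j)}(s,x_{\kappa(n,s)}^n)ds$. Before the uniform moment bound is established, Corollary \ref{lem:tilde:sign:no:rate} only gives $\|\tilde\sigma^n\|_{\mathcal L^{p_0}}\lesssim n^{1/4}$ (times moments of $x^n$), so the stochastic part of the one-step increment is only $O(n^{-1/4})$ in $\mathcal L^{p_0}$ --- not $O(n^{-1/2})$ as you assert --- while $|b^n|$ costs $n^{1/2}$ and $|\sigma^n|$ costs $n^{1/4}$ by Remark \ref{rem:bn:sign:}. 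A direct H\"older/Young trade therefore leaves an uncompensated factor of order $n^{1/4}$ and the Gronwall inequality does not close. (Your fallback via the $\mathcal L^2$ increment bound does not help either, since the remaining factors in these terms require $p_0$-th moments.)

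What is actually needed --- and what the paper spends most of the proof on --- is to split the prefactor as well: write $|x_s^n|^{p_0-2}=|x_{\kappa(n,s)}^n|^{p_0-2}+\big(|x_s^n|^{p_0-2}-|x_{\kappa(n,s)}^n|^{p_0-2}\big)$. With the frozen prefactor the martingale cross terms vanish \emph{exactly} in expectation, because the integrands of $\int_{\kappa(n,s)}^s\tilde\sigma^n(r,x_{\kappa(n,r)}^n)dw_r$ and of $\sigma_1^n(s,x_{\kappa(n,s)}^n)$ are $\mathscr F_{\kappa(n,s)}$-measurable; the remaining difference $|x_s^n|^{p_0-2}-|x_{\kappa(n,s)}^n|^{p_0-2}$ is then expanded by a second application of It\^o's formula, which supplies the additional increment factor that makes the subsequent multi-stage Young/H\"older bookkeeping close with $n$-independent constants. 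You invoke this cancellation only by analogy with the $p_0=2$ case, where it is immediate because there is no prefactor; for $p_0\ge4$ the prefactor splitting and the second It\^o expansion constitute the essential missing step.
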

\begin{proof}
By It\^o's formula,  one obtains
\begin{align*}
|x_t^n|^{p_0} & =|\xi|^{p_0} + p_0 \int_0^t |x_s^n|^{p_0-2} x_s^n b^n(x_{\kappa(n,s)}^n)ds
\\
&\quad +p_0 \int_0^t |x_s^n|^{p_0-2} x_s^n \tilde{\sigma}^n(s, x_{\kappa(n,s)}^n) dw_s
\\
& \quad +\frac{p_0(p_0-2)}{2} \int_0^t |x_s^n|^{p_0-4} |\tilde{\sigma}^{n*}(s, x_{\kappa(n,s)}^n)x_s^n|^2 ds
\\
&\quad + \frac{p_0}{2} \int_0^t |x_s^n|^{p_0-2} |\tilde{\sigma}^{n}(s, x_{\kappa(n,s)}^n)|^2 ds,
\end{align*}
and then on taking expectation along with Schwarz inequality,
\begin{align*}
E|x_t^n|^{p_0} & \leq E|\xi|^{p_0}+p_0 E\int_0^t |x_s^n|^{p_0-2} (x_s^n-x_{\kappa(n,s)}^n) b^n(x_{\kappa(n,s)}^n)ds
\\
& \quad+p_0 E\int_0^t |x_s^n|^{p_0-2} x_{\kappa(n,s)}^n b^n(x_{\kappa(n,s)}^n)ds
\\
& \quad+\frac{p_0(p_0-1)}{2} E \int_0^t |x_s^n|^{p_0-2} |\tilde{\sigma}^{n}(s, x_{\kappa(n,s)}^n)|^2 ds
\end{align*}
for any $t \in [0,T]$. Then, one uses $|z_1+z_2|^2 = |z_1|^2+2\sum_{i=1}^{d}\sum_{j=1}^{m}z_1^{(i,j)}z_2^{(i,j)}+|z_2|^2$ for $z_1,z_2 \in \mathbb{R}^{d\times m}$ to obtain the following estimates,
\begin{align}
E|x_t^n&|^{p_0}  \leq E|\xi|^{p_0}+p_0 E\int_0^t |x_s^n|^{p_0-2} (x_s^n-x_{\kappa(n,s)}^n) b^n(x_{\kappa(n,s)}^n)ds \notag
\\
& +\frac{p_0}{2} E\int_0^t |x_s^n|^{p_0-2} \{2x_{\kappa(n,s)}^n b^n(x_{\kappa(n,s)}^n)+(p_0-1)|\sigma^{n}(x_{\kappa(n,s)}^n)|^2\}ds \notag
\\
& +\frac{p_0(p_0-1)}{2} E \int_0^t |x_s^n|^{p_0-2} |\sigma^{n}_1(s, x_{\kappa(n,s)}^n)|^2 ds \notag
\\
& +p_0(p_0-1) E \int_0^t |x_s^n|^{p_0-2} \sum_{i=1}^{d}\sum_{j=1}^{m}\sigma^{n,(i,j)}(x_{\kappa(n,s)}^n) \sigma^{n,(i,j)}_1(s, x_{\kappa(n,s)}^n) ds \notag
\\
 =: & C_1+C_2+C_3+C_4+C_5. \label{eq:C1+C4}
\end{align}
Here, $C_1:=E|\xi|^{p_0}$. In order to estimate $C_2$, one notices that it can be written as
\begin{align*}
C_2:=& p_0 E\int_0^t |x_s^n|^{p_0-2} (x_s^n-x_{\kappa(n,s)}^n) b^n(x_{\kappa(n,s)}^n)ds
\\
=& p_0 E\int_0^t |x_s^n|^{p_0-2} \int^s_{\kappa(n,s)}  b^n(x_{\kappa(n,r)}^n)dr b^n(x_{\kappa(n,s)}^n)ds
\\
&+ p_0 E\int_0^t |x_s^n|^{p_0-2} \int^s_{\kappa(n,s)}  \tilde{\sigma}^n(r, x_{\kappa(n,r)}^n)dw_r b^n(x_{\kappa(n,s)}^n)ds
\end{align*}
which on the application of Remark  \ref{rem:bn:sign:} and Young's inequality gives,
\begin{align*}
C_2  \leq & K \int_0^t E |x_s^n|^{p_0} ds  + K \int_0^t E |x_{\kappa(n,s)}^n|^{p_0} ds
\\
& + p_0 E\int_0^t |x_{\kappa(n,s)}^n|^{p_0-2} \int^s_{\kappa(n,s)}  \tilde{\sigma}^n(r, x_{\kappa(n,r)}^n)dw_r b^n(x_{\kappa(n,s)}^n)ds
\\
&+ p_0 E\int_0^t (|x_s^n|^{p_0-2}-|x_{\kappa(n,s)}^n|^{p_0-2}) \int^s_{\kappa(n,s)}  \tilde{\sigma}^n(r, x_{\kappa(n,r)}^n)dw_r b^n(x_{\kappa(n,s)}^n)ds
\end{align*}
for any $t \in [0,T]$. Further, one observes that the second term of the above equation is zero  and the third term can be estimated by the application of It\^o's formula as below,
\begin{align*}
C_2 & \leq K \int_0^t \sup_{0 \leq r \leq s}E|x_r^n|^{p_0} ds + K E\int_0^t \int_{\kappa(n,s)}^s |x_r^n|^{p_0-4} x_r^n b^n(x_{\kappa(n,r)}^n)dr
\\
& \qquad \times \int^s_{\kappa(n,s)}  \tilde{\sigma}^n(r, x_{\kappa(n,r)}^n)dw_r b^n(x_{\kappa(n,s)}^n)ds
\\
&+ K E\int_0^t \int_{\kappa(n,s)}^s |x_r^n|^{p_0-4} x_r^n \tilde{\sigma}^n(r, x_{\kappa(n,r)}^n) dw_r
\\
& \qquad \times \int^s_{\kappa(n,s)}  \tilde{\sigma}^n(r, x_{\kappa(n,r)}^n)dw_r b^n(x_{\kappa(n,s)}^n)ds
\\
&+ K E\int_0^t \int_{\kappa(n,s)}^s |x_r^n|^{p_0-4} |\tilde{\sigma}^{n}(r, x_{\kappa(n,r)}^n)|^2 dr
\\
& \qquad \times \mid \int^s_{\kappa(n,s)}  \tilde{\sigma}^n(r, x_{\kappa(n,r)}^n)dw_r \mid \mid b^n(x_{\kappa(n,s)}^n)\mid ds
\end{align*}
for any $t \in [0,T]$. Due to Remark \ref{rem:bn:sign:} along with an elementary inequality of stochastic integrals, the following estimates can be obtained,
\begin{align*}
C_2 & \leq K \int_0^t \sup_{0 \leq r \leq s}E|x_r^n|^{p_0} ds
\\
& + K n E\int_0^t  \int_{\kappa(n,s)}^s (1+|x_{\kappa(n,s)}^n|^2)|x_r^n|^{p_0-3}  dr \Big|\int^s_{\kappa(n,s)}  \tilde{\sigma}^n(r, x_{\kappa(n,r)}^n)dw_r \Big| ds
\\
&+ K n^{\frac{1}{2}}E\int_0^t \int_{\kappa(n,s)}^s (1+|x_{\kappa(n,s)}^n|)|x_r^n|^{p_0-3} |\tilde{\sigma}^n(r, x_{\kappa(n,r)}^n)|^2 dr ds
\\
&+ K n^\frac{1}{2} E\int_0^t \int_{\kappa(n,s)}^s (1+|x_{\kappa(n,s)}^n|) |x_r^n|^{p_0-4} |\tilde{\sigma}^{n}(r,x_{\kappa(n,r)}^n)|^2  dr
\\
& \qquad \times \Big|\int^s_{\kappa(n,s)}   \tilde{\sigma}^n(r, x_{\kappa(n,r)}^n)dw_r \Big|ds
\end{align*}
which can also be estimated as,
\begin{align*}
C_2 & \leq  K \int_0^t \sup_{0 \leq r \leq s} E|x_r^n|^{p_0} ds
\\
&+ K  E\int_0^t  n^\frac{3}{4} \int_{\kappa(n,s)}^s (1+|x_{\kappa(n,s)}^n|^2)|x_r^n|^{p_0-3}  dr n^\frac{1}{4}\Big|\int^s_{\kappa(n,s)}  \tilde{\sigma}^n(r, x_{\kappa(n,r)}^n)dw_r \Big| ds
\\
&+ K E\int_0^t  \int_{\kappa(n,s)}^s n^{1-\frac{2}{p_0}}(1+|x_{\kappa(n,s)}^n|)|x_r^n|^{p_0-3} n^{-\frac{1}{2}+\frac{2}{p_0}}|\tilde{\sigma}^n(r, x_{\kappa(n,r)}^n)|^2 dr ds
\\
&+ K  E\int_0^t n^\frac{1}{4} \int_{\kappa(n,s)}^s (1+|x_{\kappa(n,s)}^n|) |x_r^n|^{p_0-4}  |\tilde{\sigma}^{n}(r, x_{\kappa(n,r)}^n)|^2 dr
\\
&\qquad \times n^{\frac{1}{4}} \Big|\int^s_{\kappa(n,s)}   \tilde{\sigma}^n(r, x_{\kappa(n,r)}^n)dw_r \Big|ds
\end{align*}
and then one uses Young's inequality to obtain the following estimates,
\begin{align*}
C_2 & \leq K \int_0^t \sup_{0 \leq r \leq s}E|x_r^n|^{p_0} ds
\\
& + K n^\frac{3p_0}{4(p_0-1)} E\int_0^t  \Big(\int_{\kappa(n,s)}^s (1+|x_{\kappa(n,s)}^n|^2)|x_r^n|^{p_0-3}  dr\Big)^\frac{p_0}{p_0-1} ds
\\
&+ K n E\int_0^t \int_{\kappa(n,s)}^s \big((1+|x_{\kappa(n,s)}^n|) |x_r^n|^{p_0-3} \big)^\frac{p_0}{p_0-2} dr ds
\\
&+ K  n^\frac{p_0}{4(p_0-1)} E\int_0^t \Big(\int_{\kappa(n,s)}^s (1+|x_{\kappa(n,s)}^n|)|x_r^n|^{p_0-4} \mid  \tilde{\sigma}^n(r,x_{\kappa(n,r)}^n) \mid^2 dr\Big)^\frac{p_0}{p_0-1}ds
\\
&+ K n^{\frac{p_0}{4}} E\int_0^t \Big|\int^s_{\kappa(n,s)}  \tilde{\sigma}^n(r,x_{\kappa(n,r)}^n)dw_r \Big|^{p_0} ds
\\
& + K n^{-\frac{p_0}{4}+1} E\int_0^t \int^s_{\kappa(n,s)}  |\tilde{\sigma}^n(r, x_{\kappa(n,r)}^n)|^{p_0}dr  ds
\end{align*}
for any $t \in [0,T]$. Further, by the application of H\"older's inequality and an elementary inequality of stochastic integrals, one obtains the following estimates,
\begin{align*}
C_2 & \leq K \int_0^t \sup_{0 \leq r \leq s}E|x_r^n|^{p_0} ds
\\
& + K n^{\frac{3p_0}{4(p_0-1)}-\frac{p_0}{p_0-1}+1} E\int_0^t  \int_{\kappa(n,s)}^s (1+|x_{\kappa(n,s)}^n|^\frac{2p_0}{p_0-1})|x_r^n|^{\frac{p_0(p_0-3)}{p_0-1}}   dr ds
\\
&+ K nE\int_0^t \int_{\kappa(n,s)}^s (1+|x_{\kappa(n,s)}^n|^\frac{p_0}{p_0-2}) |x_r^n|^{\frac{(p_0-3)p_0}{p_0-2}}  dr ds
\\
&+ K  n^{\frac{p_0}{4(p_0-1)}-\frac{p_0}{p_0-1}+1} E\int_0^t \int_{\kappa(n,s)}^s (1+|x_{\kappa(n,s)}^n|^\frac{p_0}{p_0-1})|x_r^n|^{\frac{p_0(p_0-4)}{p_0-1}}
\\
& \qquad \times \mid  \tilde{\sigma}^n(r,x_{\kappa(n,r)}^n)\mid^\frac{2p_0}{p_0-1} drds
\\
&+ K n^{-\frac{p_0}{4}+1} E\int_0^t \int^s_{\kappa(n,s)}  |\tilde{\sigma}^n(r, x_{\kappa(n,r)}^n)|^{p_0}dr  ds
\end{align*}
which due to Corollary \ref{lem:tilde:sign:no:rate} yields
\begin{align*}
C_2 & \leq K \int_0^t \sup_{0 \leq r \leq s}E|x_r^n|^{p_0} ds
\\
& + K E\int_0^t   (1+|x_{\kappa(n,s)}^n|^\frac{2p_0}{p_0-1}) n^{-\frac{p_0}{4(p_0-1)}+1} \int_{\kappa(n,s)}^s |x_r^n|^{\frac{p_0(p_0-3)}{p_0-1}}   dr ds
\\
&+ K E\int_0^t (1+|x_{\kappa(n,s)}^n|^\frac{p_0}{p_0-2}) n \int_{\kappa(n,s)}^s  |x_r^n|^{\frac{(p_0-3)p_0}{p_0-2}}  dr ds
\\
&+ K E\int_0^t (1+|x_{\kappa(n,s)}^n|^\frac{p_0}{p_0-1}) n^{-\frac{3p_0}{4(p_0-1)}+1}\int_{\kappa(n,s)}^s |x_r^n|^{\frac{p_0(p_0-4)}{p_0-1}}
\\
&\qquad \times \mid  \tilde{\sigma}^n(r,x_{\kappa(n,r)}^n)\mid^\frac{2p_0}{p_0-1}  drds
\\
&+ K n^{-\frac{p_0}{4}+1} \int_0^t \int^s_{\kappa(n,s)}  n^\frac{p_0}{4}(1+E|x_{\kappa(n,r)}^n|^{p_0}) dr  ds
\end{align*}
and then on further application of Young's inequality, following estimates are obtained
\begin{align*}
C_2 & \leq K+ K \int_0^t \sup_{0 \leq r \leq s}E|x_r^n|^{p_0} ds + K E\int_0^t   (1+|x_{\kappa(n,s)}^n|^{p_0}) ds
\\
&  + K E\int_0^t n^{-\frac{p_0}{4(p_0-3)}+\frac{p_0-1}{p_0-3}} \Big(\int_{\kappa(n,s)}^s |x_r^n|^{\frac{p_0(p_0-3)}{p_0-1}}   dr\Big)^\frac{p_0-1}{p_0-3} ds
\\
& + K E\int_0^t n^\frac{p_0-2}{p_0-3} \Big(\int_{\kappa(n,s)}^s  |x_r^n|^{\frac{(p_0-3)p_0}{p_0-2}}  dr\Big)^\frac{p_0-2}{p_0-3} ds
\\
& + K   E\int_0^t  n^{-\frac{3p_0}{4(p_0-2)}+\frac{p_0-1}{p_0-2}} \Big(\int_{\kappa(n,s)}^s |x_r^n|^{\frac{p_0(p_0-4)}{p_0-1}} \mid  \tilde{\sigma}^n(r,x_{\kappa(n,r)}^n)\mid^\frac{2p_0}{p_0-1} dr\Big)^\frac{p_0-1}{p_0-2} ds
\end{align*}
for any $t \in [0,T]$. Thus, on using H\"older's inequality, one obtains
\begin{align*}
C_2 & \leq K+ K \int_0^t \sup_{0 \leq r \leq s}E|x_r^n|^{p_0} ds   + K E\int_0^t n^{-\frac{p_0}{4(p_0-3)}+1} \int_{\kappa(n,s)}^s |x_r^n|^{p_0}   dr ds
\\
& + K E\int_0^t n \int_{\kappa(n,s)}^s  |x_r^n|^{p_0}  dr ds
\\
&+ K   E\int_0^t  n^{-\frac{3p_0}{4(p_0-2)}+1} \int_{\kappa(n,s)}^s |x_r^n|^{\frac{p_0(p_0-4)}{p_0-2}}  \mid  \tilde{\sigma}^n(r,x_{\kappa(n,r)}^n)\mid^\frac{2p_0}{p_0-2} dr ds
\end{align*}
for any $t \in [0,T]$. Also, one can write above inequality as
\begin{align*}
C_2 & \leq K+ K \int_0^t \sup_{0 \leq r \leq s}E|x_r^n|^{p_0} ds
\\
&+ K   E\int_0^t   \int_{\kappa(n,s)}^s  n^{\frac{p_0-4}{p_0-2}} |x_r^n|^{\frac{p_0(p_0-4)}{p_0-2}}  n^{\frac{-7p_0+16}{4(p_0-2)}+1}\mid  \tilde{\sigma}^n(r,x_{\kappa(n,r)}^n)\mid^\frac{2p_0}{p_0-2} dr ds
\end{align*}
which on using Young's inequality yields,
\begin{align*}
C_2 & \leq K+ K \int_0^t \sup_{0 \leq r \leq s}E|x_r^n|^{p_0} ds
\\
&+ K   E\int_0^t   \int_{\kappa(n,s)}^s  n |x_r^n|^{p_0} dr ds + K   E\int_0^t   \int_{\kappa(n,s)}^s n^{-\frac{3p_0-8}{8}}\mid  \tilde{\sigma}^n(r,x_{\kappa(n,r)}^n)\mid^{p_0} dr ds
\end{align*}
and due to Corollary \ref{lem:tilde:sign:no:rate}, one obtains
\begin{align*}
C_2 & \leq K+ K \int_0^t \sup_{0 \leq r \leq s}E|x_r^n|^{p_0} ds   + K  n^{-\frac{p_0}{8}} \int_0^t   (1+ E\mid x_{\kappa(n,s)}^n\mid^{p_0}) ds
\end{align*}
and hence finally the following estimates are obtained,
\begin{align}
C_2 &\leq  K + K \int_0^t \sup_{0 \leq r \leq s}E|x_r^n|^{p_0} ds \label{eq:C2}
\end{align}
for any $t \in [0,T]$. For $C_3$, one uses Assumption A-\ref{as:sde:growth} to obtain the following,
\begin{align}
C_3 &:=\frac{p_0}{2} E\int_0^t |x_s^n|^{p_0-2} \{2x_{\kappa(n,s)}^n b^n(x_{\kappa(n,s)}^n)+(p_0-1)|\sigma^{n}(x_{\kappa(n,s)}^n)|^2\}ds \notag
\\
& = \frac{p_0}{2} E\int_0^t |x_s^n|^{p_0-2} \frac{2x_{\kappa(n,s)}^n b(x_{\kappa(n,s)}^n)+(p_0-1)|\sigma(x_{\kappa(n,s)}^n)|^2}{1+n^{-1}|x_{\kappa(n,s)}^n|^{2\rho+4}} ds \notag
\\
& \leq K E\int_0^t |x_s^n|^{p_0-2} \frac{1+|x_{\kappa(n,s)}^n|^2}{1+n^{-1}|x_{\kappa(n,s)}^n|^{2\rho+4}} ds \notag
\end{align}
which due to Young's inequality gives,
\begin{align}
C_3\leq  K + K \int_0^t \sup_{0 \leq r \leq s}E|x_r^n|^{p_0} ds \label{eq:C3}
\end{align}
for any $t \in [0,T]$. Furthermore, by using Young's inequality, $C_4$ in \eqref{eq:C1+C4} is estimated  as,
\begin{align*}
C_4&:=\frac{p_0(p_0-1)}{2} E \int_0^t |x_s^n|^{p_0-2} |\sigma^{n}_1(s, x_{\kappa(n,s)}^n)|^2 ds
\\
&\leq K E \int_0^t |x_s^n|^{p_0} ds + K E \int_0^t  |\sigma^{n}_1(s, x_{\kappa(n,s)}^n)|^{p_0} ds
\end{align*}
and then on the application of Lemma \ref{lem:Esign1}, one obtains
\begin{align}
C_4  \leq K + K \int_0^t \sup_{0 \leq r \leq s}E|x_r^n|^{p_0} ds  \label{eq:C4}
\end{align}
for any $t \in [0,T]$. Now, for estimating $C_5$, one writes
\begin{align*}
C_5&:=p_0(p_0-1) E \int_0^t |x_s^n|^{p_0-2} \sum_{i=1}^{d}\sum_{j=1}^{m}\sigma^{n,(i,j)}(x_{\kappa(n,s)}^n) \sigma^{n,(i,j)}_1(s, x_{\kappa(n,s)}^n) ds
\\
& \leq K E \int_0^t |x_{\kappa(n,s)}^n|^{p_0-2} \sum_{i=1}^{d}\sum_{j=1}^{m}\sigma^{n,(i,j)}(x_{\kappa(n,s)}^n)\sum_{k=1}^{m} \int_{\kappa(n,s)}^{s} \Lambda^{n,k} \sigma^{n,(i,j)}(x_{\kappa(n,r)}^n)dw_r^k ds
\\
& +K E \int_0^t (|x_s^n|^{p_0-2}-|x_{\kappa(n,s)}^n|^{p_0-2})  \sum_{i=1}^{d}\sum_{j=1}^{m}\sigma^{n,(i,j)}(x_{\kappa(n,s)}^n)\sigma^{n,(i,j)}_1(s, x_{\kappa(n,s)}^n) ds
\end{align*}
for any $t \in [0,T]$. Clearly, the first term is zero and one uses It\^o's formula for the second term to obtain the following,
\begin{align*}
C_5 &\leq K E \int_0^t \int_{\kappa(n,s)}^{s} |x_r^n|^{p_0-4}x_r^nb^n(x_{\kappa(n,r)}^n)dr
\\
& \qquad \times \sum_{i=1}^{d}\sum_{j=1}^{m}\sigma^{n,(i,j)}(x_{\kappa(n,s)}^n)\sigma^{n,(i,j)}_1(s, x_{\kappa(n,s)}^n) ds
\\
&+K E \int_0^t \int_{\kappa(n,s)}^{s} |x_r^n|^{p_0-4}x_r^n \tilde \sigma^n(r,x_{\kappa(n,r)}^n)dw_r
\\
& \qquad \times \sum_{i=1}^{d}\sum_{j=1}^{m}\sigma^{n,(i,j)}(x_{\kappa(n,s)}^n)\sum_{k=1}^{m} \int_{\kappa(n,s)}^{s} \Lambda^{n,k} \sigma^{n,(i,j)}(x_{\kappa(n,r)}^n)dw_r^k ds
\\
&+K E \int_0^t \int_{\kappa(n,s)}^{s} |x_r^n|^{p_0-4} |\tilde \sigma^n(r,x_{\kappa(n,r)}^n)|^2dr
\\
& \qquad \times |\sum_{i=1}^{d}\sum_{j=1}^{m}\sigma^{n,(i,j)}(x_{\kappa(n,s)}^n)\sigma^{n,(i,j)}_1(s, x_{\kappa(n,s)}^n)| ds
\end{align*}
which on using Schwarz inequality and Remark \ref{rem:bn:sign:}  along with an elementary inequality of stochastic integrals yields,
\begin{align*}
C_5 &\leq K n^{\frac{3}{4}} E \int_0^t (1+|x_{\kappa(n,s)}^n|^2)\int_{\kappa(n,s)}^{s} |x_r^n|^{p_0-3}dr  |\sigma^{n}_1(s,x_{\kappa(n,s)}^n)| ds
\\
&+K n^{\frac{3}{4}}  E \int_0^t (1+|x_{\kappa(n,s)}^n|^2) \int_{\kappa(n,s)}^{s} |x_r^n|^{p_0-3} |\tilde \sigma^n(r,x_{\kappa(n,r)}^n)|dr ds
\\
&+K n^{\frac{1}{4}}E \int_0^t (1+|x_{\kappa(n,s)}^n|) \int_{\kappa(n,s)}^{s} |x_r^n|^{p_0-4} |\tilde \sigma^n(r,x_{\kappa(n,r)}^n)|^2dr|\sigma^{n}_1(s,x_{\kappa(n,s)}^n)| ds
\end{align*}
for any $t \in [0,T]$. Further, one uses Young's inequality  to obtain the following estimates,
\begin{align*}
C_5 &\leq K+ K   \int_0^t E|x_{\kappa(n,s)}^n|^{p_0} ds
\\
& + n^{\frac{3p_0}{4(p_0-2)}}E\int_0^t\big(\int_{\kappa(n,s)}^{s} |x_r^n|^{p_0-3}dr \big)^\frac{p_0}{p_0-2} |\sigma^{n}_1(s,x_{\kappa(n,s)}^n)|^\frac{p_0}{p_0-2} ds
\\
&+K n^{\frac{3p_0}{4(p_0-2)}}  E \int_0^t  \big(\int_{\kappa(n,s)}^{s} |x_r^n|^{p_0-3} |\tilde \sigma^n(r,x_{\kappa(n,r)}^n)|dr\big)^\frac{p_0}{p_0-2} ds
\\
&+K n^{\frac{p_0}{4(p_0-1)}}E \int_0^t  \big(\int_{\kappa(n,s)}^{s} |x_r^n|^{p_0-4} |\tilde \sigma^n(r,x_{\kappa(n,r)}^n)|^2dr\big)^\frac{p_0}{p_0-1}|\sigma^{n}_1(s,x_{\kappa(n,s)}^n)|^\frac{p_0}{p_0-1} ds
\end{align*}
which on the application of H\"older's inequality yields
\begin{align*}
C_5 &\leq K+ K   \int_0^t E|x_{\kappa(n,s)}^n|^{p_0} ds
\\
& + n^{\frac{3p_0}{4(p_0-2)}-\frac{p_0}{p_0-2}+1} E\int_0^t\int_{\kappa(n,s)}^{s} |x_r^n|^{\frac{p_0(p_0-3)}{p_0-2}}dr |\sigma^{n}_1(s,x_{\kappa(n,s)}^n)|^\frac{p_0}{p_0-2} ds
\\
&+K n^{\frac{3p_0}{4(p_0-2)}-\frac{p_0}{p_0-2}+1}  E \int_0^t \int_{\kappa(n,s)}^{s} |x_r^n|^{\frac{(p_0-3)p_0}{p_0-2}} |\tilde \sigma^n(r,x_{\kappa(n,r)}^n)|^\frac{p_0}{p_0-2}dr ds
\\
&+K n^{\frac{p_0}{4(p_0-1)}-\frac{p_0}{p_0-1}+1}E \int_0^t \int_{\kappa(n,s)}^{s} |x_r^n|^{\frac{(p_0-4)p_0}{p_0-1}} |\tilde \sigma^n(r,x_{\kappa(n,r)}^n)|^\frac{2p_0}{p_0-1}dr|\sigma^{n}_1(s,x_{\kappa(n,s)}^n)|^\frac{p_0}{p_0-1} ds
\end{align*}
and then again using Young's inequality, the following estimates are obtained,
\begin{align*}
C_5 &\leq K+ K   \int_0^t E|x_{\kappa(n,s)}^n|^{p_0} ds +KE\int_0^t |\sigma^{n}_1(s,x_{\kappa(n,s)}^n)|^{p_0} ds
\\
& + n^{\frac{3p_0}{4(p_0-3)}-\frac{p_0}{p_0-3}+\frac{p_0-2}{p_0-3}} E\int_0^t\big(\int_{\kappa(n,s)}^{s} |x_r^n|^{\frac{p_0(p_0-3)}{p_0-2}}dr\big)^\frac{p_0-2}{p_0-3} ds
\\
&+K  E \int_0^t \int_{\kappa(n,s)}^{s} n^{\frac{p_0-3}{p_0-2}} |x_r^n|^{\frac{(p_0-3)p_0}{p_0-2}} n^{\frac{3p_0}{4(p_0-2)}-\frac{p_0}{p_0-2}+1-\frac{p_0-3}{p_0-2}} |\tilde \sigma^n(r,x_{\kappa(n,r)}^n)|^\frac{p_0}{p_0-2}dr ds
\\
&+K n^{\frac{p_0}{4(p_0-2)}-\frac{p_0}{p_0-2}+\frac{p_0-1}{p_0-2}}E \int_0^t \big(\int_{\kappa(n,s)}^{s} |x_r^n|^{\frac{(p_0-4)p_0}{p_0-1}} |\tilde \sigma^n(r,x_{\kappa(n,r)}^n)|^\frac{2p_0}{p_0-1}dr\big)^\frac{p_0-1}{p_0-2} ds
\end{align*}
for any $t \in [0,T]$. The application of Young's inequality, H\"older's inequality  and Lemma \ref{lem:Esign1} implies
\begin{align*}
C_5 &\leq K+ K   \int_0^t \sup_{0 \leq r \leq s}E|x_r^n|^{p_0} ds + n^{-\frac{p_0}{4(p_0-3)}+1} E\int_0^t\int_{\kappa(n,s)}^{s} |x_r^n|^{p_0}dr ds
\\
&+K  E \int_0^t \int_{\kappa(n,s)}^{s} n |x_r^n|^{p_0} ds+K  E \int_0^t \int_{\kappa(n,s)}^{s} n^{-\frac{p_0}{4}+1} |\tilde \sigma^n(r,x_{\kappa(n,r)}^n)|^{p_0}dr ds
\\
&+K E \int_0^t \int_{\kappa(n,s)}^{s} n^{\frac{p_0-4}{p_0-2}} |x_r^n|^{\frac{(p_0-4)p_0}{p_0-2}} n^{\frac{p_0}{4(p_0-2)}-\frac{p_0}{p_0-2}+1-\frac{p_0-4}{p_0-2}}|\tilde \sigma^n(r,x_{\kappa(n,r)}^n)|^\frac{2p_0}{p_0-2}dr ds
\end{align*}
and once again using Young's inequality along with Corollary \ref{lem:tilde:sign:no:rate}, one obtains the following,
\begin{align*}
C_5 &\leq K+ K   \int_0^t \sup_{0 \leq r \leq s}E|x_r^n|^{p_0} d+K  E \int_0^t \int_{\kappa(n,s)}^{s} n^{-\frac{p_0}{4}+1} |\tilde \sigma^n(r,x_{\kappa(n,r)}^n)|^{p_0}dr ds
\\
&+K E \int_0^t \int_{\kappa(n,s)}^{s} n^{-\frac{3p_0}{8}+1}|\tilde \sigma^n(r,x_{\kappa(n,r)}^n)|^{p_0} dr ds
\end{align*}
for any $t \in [0,T]$. Thus, Corollary \ref{lem:tilde:sign:no:rate} gives
\begin{align} \label{eq:C5}
C_5 &\leq K+ K   \int_0^t \sup_{0 \leq r \leq s}E|x_r^n|^{p_0} ds
\end{align}
for any $t \in [0,T]$. By substituting estimates from \eqref{eq:C2}, \eqref{eq:C3}, \eqref{eq:C4}  and \eqref{eq:C5} in \eqref{eq:C1+C4}, the following estimates are obtained,
\begin{align}
\sup_{0 \leq s \leq t} E|x_s^n|^{p_0} \leq K + K \int_0^t \sup_{0 \leq r \leq s}E|x_r^n|^{p_0} ds<\infty  \notag
\end{align}
for any $t \in [0,T]$. The proof is completed by the Gronwall's lemma.
\end{proof}
\section{Proof of Main Result}
A simple application of the mean value theorem, which appears in the Lemma below, allows us to simplify substantially the proof of Theorem \ref{thm:main}. Furthermore, throughout this section, it is assumed that $p_0\ge 2(3\rho +1)$ and $p_1>2$.
\begin{lemma} \label{lem:goodlemma}
Let $f:\mathbb{R}^d \to \mathbb{R}$ be a continuously differentiable function which satisfies the following,
\begin{align} \label{eq:goodlemma}
|Df(x)-Df(\bar x)| \leq (1+|x|+|\bar x|)^{\gamma}|x-\bar x|
\end{align}
for all $x ,\bar x\in \mathbb{R}^d$ and for a fixed $\gamma \in \mathbb{R} $. Then, there exists a constant $L$ such that
$$
|f(x)-f(\bar x)-\sum_{i=1}^d\frac{\partial f(\bar x)}{\partial y^i}(x^i-\bar x^i)| \leq L(1+|x|+|\bar x|)^{\gamma}|x-\bar x|^2
$$
for any $x,\bar x \in \mathbb{R}^d$.
\end{lemma}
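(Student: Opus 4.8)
The plan is to reduce the estimate to the one–dimensional fundamental theorem of calculus along the segment joining $\bar x$ to $x$. Setting $g(t):=f(\bar x+t(x-\bar x))$ for $t\in[0,1]$, the chain rule gives $g'(t)=\sum_{i=1}^{d}\frac{\partial f}{\partial y^i}(\bar x+t(x-\bar x))(x^i-\bar x^i)$, so that $f(x)-f(\bar x)-\sum_{i=1}^{d}\frac{\partial f(\bar x)}{\partial y^i}(x^i-\bar x^i)=g(1)-g(0)-g'(0)=\int_0^1(g'(t)-g'(0))\,dt$. I would then estimate $|g'(t)-g'(0)|$ by the Cauchy--Schwarz inequality followed by the hypothesis \eqref{eq:goodlemma}, using that $|(\bar x+t(x-\bar x))-\bar x|=t|x-\bar x|$; this yields
\[
\left|f(x)-f(\bar x)-\sum_{i=1}^{d}\tfrac{\partial f(\bar x)}{\partial y^i}(x^i-\bar x^i)\right|\le |x-\bar x|^2\int_0^1 t\bigl(1+|\bar x+t(x-\bar x)|+|\bar x|\bigr)^{\gamma}\,dt ,
\]
and it remains to bound the integral $J:=\int_0^1 t\bigl(1+|\bar x+t(x-\bar x)|+|\bar x|\bigr)^{\gamma}\,dt$ by $L(1+|x|+|\bar x|)^{\gamma}$.

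When $\gamma\ge 0$ this is immediate: convexity of the norm gives $|\bar x+t(x-\bar x)|\le t|x|+(1-t)|\bar x|\le|x|+|\bar x|$, hence $1+|\bar x+t(x-\bar x)|+|\bar x|\le 2(1+|x|+|\bar x|)$, and so $J\le 2^{\gamma}(1+|x|+|\bar x|)^{\gamma}\int_0^1 t\,dt=2^{\gamma-1}(1+|x|+|\bar x|)^{\gamma}$. The only genuinely delicate point --- and the step I expect to require the most care --- is the case $\gamma<0$, since there the naive pointwise bound $(1+|\bar x+t(x-\bar x)|+|\bar x|)^{\gamma}\le L(1+|x|+|\bar x|)^{\gamma}$ breaks down near $t=0$ (take $\bar x=0$ and $|x|$ large), so one is forced to exploit the weight $t$ appearing in the integrand.

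For $\gamma<0$ the key elementary inequality I would establish is
\[
1+|\bar x+t(x-\bar x)|+|\bar x|\ \ge\ 1+t\bigl(|x|+|\bar x|\bigr)\qquad\text{for all }t\in[0,1],
\]
which follows by writing $\bar x+t(x-\bar x)=x-(1-t)(x-\bar x)$, applying the reverse triangle inequality, and then using $|x-\bar x|\le|x|+|\bar x|$. Since $\gamma<0$, this gives $J\le\int_0^1 t(1+tM)^{\gamma}\,dt$ with $M:=|x|+|\bar x|$, and a direct computation of this scalar integral (splitting at $t=1/M$ when $M>1$, bounding $(1+tM)^{\gamma}\le1$ on the lower piece, $(1+tM)^{\gamma}\le(tM)^{\gamma}$ on the upper one, and using $\int_{1/M}^{1}t^{1+\gamma}\,dt\le\frac{1}{2+\gamma}$) shows $\int_0^1 t(1+tM)^{\gamma}\,dt\le L_{\gamma}(1+M)^{\gamma}=L_\gamma(1+|x|+|\bar x|)^\gamma$, which is exactly the required bound. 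This argument uses $\gamma>-2$, which is harmless: the exponents $\gamma$ occurring in the applications all satisfy $\gamma\ge-1$ (cf.\ Assumptions A-\ref{as:sde:lipschitz:b'}, A-\ref{as:sde:lipschitz:sigma'} and Remark \ref{rem:poly:b}), and for $\gamma\le-2$ the hypothesis \eqref{eq:goodlemma} in fact forces $Df$ to be constant (let a third point tend to infinity in the triangle inequality for $Df$), so that the left-hand side of the claimed estimate vanishes identically. Collecting the three cases completes the proof.
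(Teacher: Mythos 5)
Your proof is correct and rests on the same basic idea as the paper's: a first-order Taylor expansion of $f$ along the segment from $\bar x$ to $x$, combined with the bound \eqref{eq:goodlemma} on $Df$. The paper uses the Lagrange (mean-value) form of the remainder, writing $f(x)-f(\bar x)=\sum_{i}\frac{\partial f(qx+(1-q)\bar x)}{\partial y^i}(x^i-\bar x^i)$ for some $q\in(0,1)$ and then invoking \eqref{eq:goodlemma} in a single concluding line, whereas you use the integral form $\int_0^1(g'(t)-g'(0))\,dt$. The substantive difference lies in your treatment of $\gamma<0$: the paper's final step implicitly requires $q\,(1+|qx+(1-q)\bar x|+|\bar x|)^{\gamma}\le L(1+|x|+|\bar x|)^{\gamma}$ uniformly in $q$, which is immediate for $\gamma\ge 0$ but for $\gamma<0$ needs precisely the kind of argument you supply (the lower bound $1+|\bar x+t(x-\bar x)|+|\bar x|\ge 1+t(|x|+|\bar x|)$ together with the scalar estimate exploiting the weight $t$) --- and negative exponents do occur in the applications, e.g.\ $\gamma=(\rho-2)/2=-1/2$ when $\rho=1$ in Lemma \ref{lem:sig-sig:rate}. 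Your separate disposal of $\gamma\le-2$ via the observation that \eqref{eq:goodlemma} then forces $Df$ to be constant is also correct (in fact $\gamma<-1$ already yields that degeneracy, which is the only reason the paper's single-$q$ bound can be salvaged on all of $\gamma<-1$). In short, your argument is complete and slightly more general where the paper's is terse; there is no gap.
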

\begin{proof} By mean value theorem,
$$
f(x)-f(\bar x)=\sum_{i=1}^{d}\frac{f(qx+(1-q)\bar x)}{\partial y^i}(x^i-\bar x^i)
$$
for some $q\in (0,1)$. Hence, for a fixed $q \in (0,1)$,
\begin{align*}
|f(x)&-f(\bar x)-\sum_{i=1}^{d}\frac{\partial f(\bar x)}{\partial y^i}(x^i-\bar x^i)|
\\
&= \Big|\sum_{i=1}^{d}\frac{\partial f(qx+(1-q)\bar x)}{\partial y^i}(x^i-\bar x^i)-\sum_{i=1}^{d}\frac{\partial f(x)}{\partial y^i}(x^i-\bar x^i)|
\\
&\leq \sum_{i=1}^{d}\Big| \frac{\partial f(qx+(1-q)\bar x)}{\partial y^i}-\frac{\partial f(x)}{\partial y^i}\Big||x^i-\bar x^i|
\end{align*}
which on using equation \eqref{eq:goodlemma} completes the proof.
\end{proof}
\begin{lemma} \label{lem:sign1:rate}
Let Assumptions A-\ref{as:sde:initial} to A-\ref{as:sde:lipschitz:sigma'} be satisfied. Then, for every $n\in \mathbb{N} $
$$
\sup_{0 \leq t \leq T }E|\sigma_1^n(t, x_{\kappa(n,t)}^n)|^{p} \leq K n^{-\frac{p}{2}}
$$
for any $p \leq \frac{p_0}{\rho+1}$.
\end{lemma}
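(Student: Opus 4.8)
The plan is to exploit the fact that on the interval $[\kappa(n,t),t]$ the scheme is frozen, so that $\sigma_1^n(t,x_{\kappa(n,t)}^n)$ coincides with the It\^o integral $\sum_{j=1}^{m}\int_{\kappa(n,t)}^{t}\Lambda^{n,j}\sigma(x_{\kappa(n,s)}^n)\,dw_s^j$ (recall $x_{\kappa(n,s)}^n=x_{\kappa(n,t)}^n$ for every $s\in[\kappa(n,t),t]$), and then to combine an elementary inequality for stochastic integrals with the uniform-in-$n$ moment bound of Lemma~\ref{lem:mbound}. The argument mirrors that of Lemma~\ref{lem:Esign1}, the only new ingredient being that one now keeps track of the negative power of $n$ coming from the short integration window.

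First I would treat the case $p\ge2$. Using an elementary inequality of stochastic integrals followed by H\"older's inequality in time — the interval $[\kappa(n,t),t]$ having length at most $1/n$ — one obtains
\[
E|\sigma_1^n(t,x_{\kappa(n,t)}^n)|^{p}\le K\,n^{-\frac{p}{2}+1}\sum_{j=1}^{m}E\int_{\kappa(n,t)}^{t}|\Lambda^{n,j}\sigma(x_{\kappa(n,s)}^n)|^{p}\,ds .
\]
Since $|\Lambda^{n,j}\sigma(x)|\le|\Lambda^{j}\sigma(x)|\le L(1+|x|)^{\rho+1}$ by Remarks~\ref{rem:bn:sign:} and~\ref{rem:poly:b}, and since the integrand is constant in $s$ on the frozen interval, the $ds$-integral contributes a further factor $n^{-1}$, leaving
\[
E|\sigma_1^n(t,x_{\kappa(n,t)}^n)|^{p}\le K\,n^{-\frac{p}{2}}\,E\big(1+|x_{\kappa(n,t)}^n|\big)^{(\rho+1)p}.
\]
Here the restriction $p\le p_0/(\rho+1)$ is exactly what is required: it ensures $(\rho+1)p\le p_0$, so that the bound $\sup_{n\in\mathbb N}\sup_{0\le t\le T}E|x_t^n|^{p_0}\le K$ from Lemma~\ref{lem:mbound} yields $E(1+|x_{\kappa(n,t)}^n|)^{(\rho+1)p}\le K$, and hence $\sup_{0\le t\le T}E|\sigma_1^n(t,x_{\kappa(n,t)}^n)|^{p}\le K\,n^{-p/2}$.

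For $1\le p<2$ (only relevant if one restricts attention to such small exponents, since under the standing hypotheses $p_0\ge2(3\rho+1)$ one has $p_0/(\rho+1)\ge2$), it suffices to note that the case $p=2$ is admissible because $2(\rho+1)\le p_0$, and then apply Jensen's inequality $E|\sigma_1^n(t,x_{\kappa(n,t)}^n)|^{p}\le\big(E|\sigma_1^n(t,x_{\kappa(n,t)}^n)|^{2}\big)^{p/2}$ to transfer the estimate. Alternatively one may condition on $\mathscr F_{\kappa(n,t)}$ and use the independence of $x_{\kappa(n,t)}^n$ from the increment $w_t-w_{\kappa(n,t)}$, together with $E|w_t^{j}-w_{\kappa(n,t)}^{j}|^{p}\le K n^{-p/2}$.

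There is no genuine obstacle in this lemma; the only point needing care is the exponent bookkeeping — verifying that the factor $n^{-p/2+1}$ produced by the stochastic-integral inequality and the factor $n^{-1}$ coming from the frozen time integral multiply to exactly $n^{-p/2}$, and that $(\rho+1)p\le p_0$ is precisely the inequality that lets Lemma~\ref{lem:mbound} close the estimate.
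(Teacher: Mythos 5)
Your argument is correct and follows essentially the same route as the paper: an elementary inequality for stochastic integrals plus H\"older's inequality produces the factor $n^{-\frac{p}{2}+1}$, the bounds $|\Lambda^{n,j}\sigma(x)|\le|\Lambda^{j}\sigma(x)|\le L(1+|x|)^{\rho+1}$ from Remarks \ref{rem:bn:sign:} and \ref{rem:poly:b} reduce everything to $(1+|x_{\kappa(n,s)}^n|)^{(\rho+1)p}$, and the length-$1/n$ integration window together with Lemma \ref{lem:mbound} (which applies precisely because $(\rho+1)p\le p_0$) closes the estimate. The extra remarks on $p<2$ are harmless but not needed in the paper's setting.
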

\begin{proof}
By the application of an elementary inequality of stochastic integrals, H\"older's inequality and Remarks [\ref{rem:poly:b}, \ref{rem:bn:sign:}], one obtains
\begin{align*}
E|\sigma_1^n(t, x_{\kappa(n,t)}^n)|^{p} & \leq K \sum_{j=1}^{m}E\Big|\int_{\kappa(n,t)}^t \Lambda^{n,j}\sigma(x_{\kappa(n,s)}^n)dw_s^j\Big|^{p}
\\
& \leq K n^{-\frac{p}{2}+1} E \int_{\kappa(n,t)}^t |\Lambda^{j}\sigma(x_{\kappa(n,s)}^n)|^{p} ds
\\
& \leq K n^{-\frac{p}{2}+1} E \int_{\kappa(n,t)}^t (1+|x_{\kappa(n,s)}^n|)^{(\rho + 1)p}  ds
\end{align*}
which due to Lemma \ref{lem:mbound} completes the proof.
\end{proof}
As a consequence of  the above lemma, one obtains the following corollary.
\begin{cor} \label{cor:tilde:sig}
Let Assumptions A-\ref{as:sde:initial} to A-\ref{as:sde:lipschitz:sigma'} be satisfied. Then, for every $n \in \mathbb{N}$,
$$
\sup_{0 \leq t \leq T }E|\tilde \sigma^n(t, x_{\kappa(n,t)}^n)|^{p} \leq K
$$
for any $p \leq \frac{p_0}{\rho+1}$.
\end{cor}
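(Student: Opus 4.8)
The plan is to exploit the decomposition $\tilde\sigma^n(t,x)=\sigma^n(x)+\sigma_1^n(t,x)$ and to estimate the two summands separately, recombining via an elementary inequality. Writing $|a+b|^p\le K(|a|^p+|b|^p)$ for the generic constant $K$ (allowed to depend on $p$), one has
$$
E|\tilde\sigma^n(t, x_{\kappa(n,t)}^n)|^{p}\le K\, E|\sigma^n(x_{\kappa(n,t)}^n)|^{p}+K\, E|\sigma_1^n(t, x_{\kappa(n,t)}^n)|^{p}
$$
for every $t\in[0,T]$ and $n\in\mathbb N$, so it suffices to bound each term uniformly in $t$ and $n$.

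The contribution of $\sigma_1^n$ is disposed of immediately: Lemma \ref{lem:sign1:rate} gives $\sup_{0\le t\le T}E|\sigma_1^n(t, x_{\kappa(n,t)}^n)|^{p}\le K n^{-p/2}\le K$ whenever $p\le\frac{p_0}{\rho+1}$, which is exactly the range claimed in the corollary.

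For the contribution of $\sigma^n$, I would first use $|\sigma^n(x)|\le|\sigma(x)|$ from Remark \ref{rem:bn:sign:} together with the polynomial growth bound $|\sigma(x)|\le L(1+|x|)^{(\rho+2)/2}$ from Remark \ref{rem:poly:b}, which yields
$$
E|\sigma^n(x_{\kappa(n,t)}^n)|^{p}\le K\,E\big(1+|x_{\kappa(n,t)}^n|\big)^{(\rho+2)p/2}\le K\big(1+E|x_{\kappa(n,t)}^n|^{(\rho+2)p/2}\big).
$$
Since $\rho\ge 0$ one has $\frac{\rho+2}{2(\rho+1)}\le 1$, hence $\frac{(\rho+2)p}{2}\le\frac{(\rho+2)p_0}{2(\rho+1)}\le p_0$ for every $p\le\frac{p_0}{\rho+1}$, so the uniform $p_0$-th moment bound of Lemma \ref{lem:mbound} applies and gives $\sup_{0\le t\le T}E|\sigma^n(x_{\kappa(n,t)}^n)|^{p}\le K$ independently of $n$. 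Combining the two bounds completes the proof.

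There is essentially no obstacle here; the corollary is a routine consequence of Lemmas \ref{lem:mbound} and \ref{lem:sign1:rate}. The only point requiring a moment's care is the exponent bookkeeping — one must check that the admissible range $p\le\frac{p_0}{\rho+1}$ forced by Lemma \ref{lem:sign1:rate} is also compatible with keeping the power $(\rho+2)p/2$ of $|x_{\kappa(n,t)}^n|$, produced by the polynomial growth of $\sigma$, within the $p_0$-th moment controlled by Lemma \ref{lem:mbound}; this holds precisely because $\frac{\rho+2}{2(\rho+1)}\le 1$ for all $\rho\ge 0$. Note also that, in contrast to Corollary \ref{lem:tilde:sign:no:rate}, here one deliberately uses the growth bound $|\sigma^n(x)|\le|\sigma(x)|$ rather than the $n^{1/2}$-bound, which is what allows the estimate to be uniform in $n$ with no power of $n$ on the right-hand side.
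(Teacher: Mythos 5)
Your proposal is correct and follows exactly the route the paper intends: the corollary is stated as an immediate consequence of Lemma \ref{lem:sign1:rate} for the $\sigma_1^n$ part, with the $\sigma^n$ part handled by the bound $|\sigma^n(x)|\le|\sigma(x)|\le L(1+|x|)^{(\rho+2)/2}$ from Remarks \ref{rem:bn:sign:} and \ref{rem:poly:b} together with the uniform moment bound of Lemma \ref{lem:mbound}. Your exponent check $(\rho+2)p/2\le p_0$ for $p\le p_0/(\rho+1)$ is the only detail the paper leaves implicit, and you verify it correctly.
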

\begin{lemma} \label{lem:one:step}
Let Assumptions A-\ref{as:sde:initial} to A-\ref{as:sde:lipschitz:sigma'} be satisfied. Then, for every $n \in \mathbb{N}$,
$$
\sup_{0 \leq t \leq T}E|x_t^n-x_{\kappa(n,t)}^n|^{p} \leq K n^{-\frac{p}{2}}
$$
for any $p \leq \frac{p_0}{\rho +1}$.
\end{lemma}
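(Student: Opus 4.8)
The plan is to subtract the two instances of the defining relation \eqref{eq:milstein:diffusion} at times $t$ and $\kappa(n,t)$, which gives, for any $t\in[0,T]$,
\[
x_t^n-x_{\kappa(n,t)}^n=\int_{\kappa(n,t)}^t b^n(x_{\kappa(n,s)}^n)\,ds+\int_{\kappa(n,t)}^t \tilde\sigma^n(s,x_{\kappa(n,s)}^n)\,dw_s,
\]
and then to estimate the $\mathcal L^p$-norms of the drift and diffusion contributions separately, exploiting throughout that the length of $[\kappa(n,t),t]$ is at most $n^{-1}$. (For $p<2$ the asserted bound follows from the case $p=2$ by Jensen's inequality, so one may assume $p\ge 2$.)

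For the drift term, I would first apply H\"older's inequality in the time variable to obtain $\big|\int_{\kappa(n,t)}^t b^n(x_{\kappa(n,s)}^n)\,ds\big|^p\le n^{-(p-1)}\int_{\kappa(n,t)}^t |b^n(x_{\kappa(n,s)}^n)|^p\,ds$. Since $|b^n(x)|\le |b(x)|\le K(1+|x|)^{\rho+1}$ by Remark \ref{rem:poly:b}, taking expectations and invoking the $n$-uniform moment bound of Lemma \ref{lem:mbound} — applicable because $p(\rho+1)\le p_0$ under the hypothesis $p\le p_0/(\rho+1)$ — together with the factor $n^{-1}$ from the remaining time integral yields $E\big|\int_{\kappa(n,t)}^t b^n(x_{\kappa(n,s)}^n)\,ds\big|^p\le Kn^{-p}$, which is even smaller than the claimed bound.

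For the diffusion term, I would use the elementary inequality of stochastic integrals together with H\"older's inequality to get $E\big|\int_{\kappa(n,t)}^t \tilde\sigma^n(s,x_{\kappa(n,s)}^n)\,dw_s\big|^p\le Kn^{-p/2+1}\,E\int_{\kappa(n,t)}^t|\tilde\sigma^n(s,x_{\kappa(n,s)}^n)|^p\,ds$; here one should note that, although $\tilde\sigma^n$ depends explicitly on $s$ through the increment $w_s^j-w_{\kappa(n,s)}^j$, the integrand is adapted and the inequality still applies. Then Corollary \ref{cor:tilde:sig} bounds $E|\tilde\sigma^n(s,x_{\kappa(n,s)}^n)|^p$ by a constant uniformly in $s$ and $n$ (again for $p\le p_0/(\rho+1)$), so the remaining integral over an interval of length $\le n^{-1}$ contributes a factor $n^{-1}$, leaving $E\big|\int_{\kappa(n,t)}^t \tilde\sigma^n(s,x_{\kappa(n,s)}^n)\,dw_s\big|^p\le Kn^{-p/2}$. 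Combining the two estimates and taking the supremum over $t\in[0,T]$ completes the argument.

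There is no genuine obstacle here: this is a standard one-step consistency estimate. The only points requiring care are the bookkeeping of the powers of $n$, so that the interval length $n^{-1}$ precisely compensates the factor $n^{1-p/2}$ produced by the stochastic-integral inequality, and the realisation that the essential ingredient is the $n$-uniform moment bound already established in Lemma \ref{lem:mbound}, entering here through Corollary \ref{cor:tilde:sig}.
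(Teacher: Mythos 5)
Your proposal is correct and follows essentially the same route as the paper: decompose the one-step increment into its drift and stochastic-integral parts, apply H\"older's inequality and the elementary inequality of stochastic integrals to pull out the factors $n^{-p+1}$ and $n^{-p/2+1}$, and then invoke the polynomial growth bounds of Remarks \ref{rem:poly:b} and \ref{rem:bn:sign:} together with Lemma \ref{lem:mbound} and Corollary \ref{cor:tilde:sig}. The only (harmless) addition is your explicit reduction of the case $p<2$ to $p=2$ via Jensen's inequality, which the paper leaves implicit.
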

\begin{proof}
Due to the scheme \eqref{eq:milstein:diffusion},
\begin{align}
E|x_t^n-x_{\kappa(n,t)}^n|^{p} \leq  & K E\Big|\int_{\kappa(n,t)}^t  b^n(x_{\kappa(n,s)}^n)ds \Big|^{p}  + K E\Big|\int_{\kappa(n,t)}^t \tilde{\sigma}^n(s, x_{\kappa(n,s)}^n) dw_s\Big|^{p} \notag
\end{align}
and then the application of H\"older's inequality along with an elementary inequality of stochastic integrals gives
\begin{align*}
E|x_t^n-x_{\kappa(n,t)}^n|^{p} \leq  & K n^{-p+1}E\int_{\kappa(n,t)}^t  |b^n(x_{\kappa(n,s)}^n)|^{p} ds
\\
& + K n^{-\frac{p}{2}+1} E\int_{\kappa(n,t)}^t|\tilde{\sigma}^n(s,x_{\kappa(n,s)}^n)|^{p} ds
\end{align*}
which on using Remarks [\ref{rem:poly:b}, \ref{rem:bn:sign:}] yields the following estimates,
\begin{align*}
E|x_t^n-x_{\kappa(n,t)}^n|^{p} \leq  & K n^{-p+1}E\int_{\kappa(n,t)}^t  (1+|x_{\kappa(n,s)}^n|)^{(\rho+1)p} ds
\\
& + K n^{-\frac{p}{2}+1} E\int_{\kappa(n,t)}^t|\tilde{\sigma}^n(s,x_{\kappa(n,s)}^n)|^{p} ds
\end{align*}
for any $t \in [0,T]$. Thus, one uses Lemma \ref{lem:mbound} and Corollary \ref{cor:tilde:sig} to complete the proof.
\end{proof}
\begin{lemma} \label{lem:b-bn:rate}
Let Assumptions A-\ref{as:sde:initial} to A-\ref{as:sde:lipschitz:sigma'} be satisfied. Then, for every $n \in \mathbb{N}$,
\begin{align*}
\sup_{0 \leq t \leq T}E|b(x_{\kappa(n,t)}^n)-b^n(x_{\kappa(n,t)}^n)|^p \leq Kn^{-p}
\end{align*}
for any $p \leq \frac{p_0}{3 \rho+1}$.
\end{lemma}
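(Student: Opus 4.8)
The plan is to exploit the explicit form of the ratio defining $b^n$ and then invoke the uniform moment bound. Recalling that $\theta=1$ throughout the article, I would first write, for any $x\in\mathbb{R}^d$ and $n\in\mathbb{N}$,
\begin{align*}
b(x)-b^n(x)=b(x)\Big(1-\frac{1}{1+n^{-1}|x|^{2\rho}}\Big)=b(x)\,\frac{n^{-1}|x|^{2\rho}}{1+n^{-1}|x|^{2\rho}}.
\end{align*}
Using the elementary inequality $\frac{a}{1+a}\le a$ for $a\ge 0$ with $a=n^{-1}|x|^{2\rho}$, together with the polynomial growth estimate $|b(x)|\le L(1+|x|)^{\rho+1}$ from Remark \ref{rem:poly:b}, this gives
\begin{align*}
|b(x)-b^n(x)|\le n^{-1}|b(x)|\,|x|^{2\rho}\le K n^{-1}(1+|x|)^{3\rho+1}
\end{align*}
for every $x\in\mathbb{R}^d$ and $n\in\mathbb{N}$.

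Next I would substitute $x=x_{\kappa(n,t)}^n$, raise both sides to the power $p$ and take expectations, obtaining
\begin{align*}
E|b(x_{\kappa(n,t)}^n)-b^n(x_{\kappa(n,t)}^n)|^p\le K n^{-p}\,E\big(1+|x_{\kappa(n,t)}^n|\big)^{(3\rho+1)p}
\end{align*}
for any $t\in[0,T]$. Since the hypothesis $p\le \frac{p_0}{3\rho+1}$ is exactly equivalent to $(3\rho+1)p\le p_0$, H\"older's inequality and the uniform moment bound of Lemma \ref{lem:mbound} (applied at the grid point $\kappa(n,t)\le T$) yield $\sup_{n\in\mathbb{N}}\sup_{0\le t\le T}E(1+|x_{\kappa(n,t)}^n|)^{(3\rho+1)p}\le K$. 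Taking the supremum over $t\in[0,T]$ completes the proof.

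There is no genuine obstacle here; the only point requiring care is the bookkeeping of the exponent $3\rho+1$ — one power of $\rho$ coming from the super-linear growth of $b$ and the remaining $2\rho$ from the correction factor $n^{-1}|x|^{2\rho}$ — which is precisely what dictates the admissible range $p\le p_0/(3\rho+1)$ needed to close the estimate via the moment bound of Lemma \ref{lem:mbound}.
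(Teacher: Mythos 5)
Your argument is correct and is essentially identical to the paper's own proof: both write $b(x)-b^n(x)$ as $b(x)\,n^{-1}|x|^{2\rho}/(1+n^{-1}|x|^{2\rho})$, bound this by $Kn^{-1}(1+|x|)^{3\rho+1}$ via the growth estimate of Remark \ref{rem:poly:b}, and conclude with the uniform moment bound of Lemma \ref{lem:mbound} under the condition $(3\rho+1)p\le p_0$. No discrepancies to report.
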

\begin{proof}
One observes that
\begin{align*}
|b(x_{\kappa(n,t)}^n)-b^n(x_{\kappa(n,t)}^n)|&=n^{-1}\frac{|b(x_{\kappa(n,t)}^n)||x_{\kappa(n,t)}^n|^{2\rho}}{1+n^{-1}|x_{\kappa(n,t)}^n|^{2\rho}}  \leq n^{-1}(1+|x_{\kappa(n,t)}^n|)^{3\rho+1}
\end{align*}
and hence Lemma \ref{lem:mbound} completes the proof.
\end{proof}
\begin{lemma} \label{lem:si-sin:rate}
Let Assumptions A-\ref{as:sde:initial} to A-\ref{as:sde:lipschitz:sigma'} be satisfied. Then, for  for every $n \in \mathbb{N}$,
\begin{align*}
\sup_{0 \leq t \leq T}E|\sigma(x_{\kappa(n,t)}^n)-\sigma^n(x_{\kappa(n,t)}^n)|^p \leq Kn^{-p}
\end{align*}
for any $p \leq \frac{p_0}{2.5 \rho+1}$.
\end{lemma}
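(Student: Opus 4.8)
The plan is to follow exactly the template of Lemma \ref{lem:b-bn:rate}: first establish a pointwise (deterministic) bound of the form $|\sigma(x)-\sigma^n(x)|\le K n^{-1}(1+|x|)^{\beta}$ with the appropriate exponent $\beta$, and then raise to the $p$-th power, take expectations, and close with the uniform-in-$n$ moment bound of Lemma \ref{lem:mbound}.

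First I would rewrite the truncation difference as a product,
\begin{align*}
\sigma(x)-\sigma^n(x) = \sigma(x)\Big(1-\frac{1}{1+n^{-1}|x|^{2\rho}}\Big) = \sigma(x)\,\frac{n^{-1}|x|^{2\rho}}{1+n^{-1}|x|^{2\rho}},
\end{align*}
so that $|\sigma(x)-\sigma^n(x)| \le n^{-1}|\sigma(x)|\,|x|^{2\rho}$ since the denominator is at least $1$. Then I would substitute the polynomial growth bound $|\sigma(x)|\le L(1+|x|)^{\frac{\rho+2}{2}}$ recorded in Remark \ref{rem:poly:b} together with $|x|^{2\rho}\le(1+|x|)^{2\rho}$; keeping track of the exponents, $\frac{\rho+2}{2}+2\rho = 2.5\rho+1$, this yields $|\sigma(x)-\sigma^n(x)| \le L\, n^{-1}(1+|x|)^{2.5\rho+1}$. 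Evaluating at $x=x_{\kappa(n,t)}^n$, taking $p$-th powers and expectations leaves $K n^{-p}\,E(1+|x_{\kappa(n,t)}^n|)^{(2.5\rho+1)p}$, and the hypothesis $p\le p_0/(2.5\rho+1)$ is precisely what makes $(2.5\rho+1)p\le p_0$, so Lemma \ref{lem:mbound} (applied at the time $\kappa(n,t)\in[0,T]$) bounds the remaining moment by a constant independent of $n$. Taking $\sup_{t\in[0,T]}$ finishes the argument.

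There is no real obstacle here: the proof is a one-line pointwise estimate followed by an invocation of the moment bound, essentially identical to Lemma \ref{lem:b-bn:rate}. The only point requiring care is the exponent bookkeeping — one must combine the growth rate $\frac{\rho+2}{2}$ of $\sigma$ with the extra factor $|x|^{2\rho}$ coming from the truncation to obtain total degree $2.5\rho+1$, and this is exactly what dictates the admissible range $p\le p_0/(2.5\rho+1)$ in the statement (one half-power of $\rho$ lower than the $3\rho+1$ appearing in Lemma \ref{lem:b-bn:rate}, reflecting the fact that $\sigma$ grows more slowly than $b$).
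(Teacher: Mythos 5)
Your proposal is correct and is exactly the argument the paper intends: the paper's proof of this lemma simply says it "follows using same arguments as used in Lemma \ref{lem:b-bn:rate}", and your pointwise identity $\sigma(x)-\sigma^n(x)=\sigma(x)\,n^{-1}|x|^{2\rho}/(1+n^{-1}|x|^{2\rho})$ combined with $|\sigma(x)|\le L(1+|x|)^{(\rho+2)/2}$ and Lemma \ref{lem:mbound} is precisely that argument. Your exponent bookkeeping $\tfrac{\rho+2}{2}+2\rho=2.5\rho+1$ correctly accounts for the threshold $p\le p_0/(2.5\rho+1)$ in the statement.
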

\begin{proof}
The proof follows using same arguments as used in Lemma \ref{lem:b-bn:rate}.
\end{proof}
\begin{lemma} \label{lem:sig-sig:rate}
Let Assumptions A-\ref{as:sde:initial} to  A-\ref{as:sde:lipschitz:sigma'} be satisfied. Then, for every $n \in \mathbb{N}$,
$$
\sup_{0 \leq t \leq T}E|\sigma(x_t^n)-\sigma(x_{\kappa(n,t)}^n)-\sigma_1^n(t, x_{\kappa(n,t)}^n)|^p \leq K n^{-p}
$$
for any $p \leq \frac{p_0}{3\rho+1}$.
\end{lemma}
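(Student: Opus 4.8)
The plan is to Taylor-expand $\sigma(x_t^n)-\sigma(x_{\kappa(n,t)}^n)$ about $x_{\kappa(n,t)}^n$, to substitute the one-step increment of the scheme \eqref{eq:milstein:diffusion}, and to observe that the resulting first-order term is \emph{exactly} $\sigma_1^n(t,x_{\kappa(n,t)}^n)$, so that it cancels and only genuinely $O(n^{-1})$ remainders survive. These remainders are then controlled in $\mathcal L^p$ by means of Remark \ref{rem:poly:b}, the moment bound of Lemma \ref{lem:mbound} and the one-step estimate of Lemma \ref{lem:one:step}.

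In detail, fix $t\in[0,T]$ and write $\bar x:=x_{\kappa(n,t)}^n$. Applying Lemma \ref{lem:goodlemma} to each scalar component $\sigma^{(i,k)}$, which by Assumption A-\ref{as:sde:lipschitz:sigma'} satisfies \eqref{eq:goodlemma} with $\gamma=\frac{\rho-2}{2}$, one obtains
$$
\sigma^{(i,k)}(x_t^n)-\sigma^{(i,k)}(\bar x)=\sum_{u=1}^d\frac{\partial\sigma^{(i,k)}(\bar x)}{\partial x^u}(x_t^{n,u}-\bar x^u)+R^{(1),(i,k)}_t ,
$$
with $|R^{(1),(i,k)}_t|\le L(1+|x_t^n|+|\bar x|)^{(\rho-2)/2}|x_t^n-\bar x|^2$. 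Since $\kappa(n,s)=\kappa(n,t)$ for $s\in[\kappa(n,t),t]$ and $\Lambda^{n,j}\sigma(\bar x)$ is $\mathscr{F}_{\kappa(n,t)}$-measurable, the increment of \eqref{eq:milstein:diffusion} reads
$$
x_t^n-\bar x=b^n(\bar x)(t-\kappa(n,t))+\sigma^n(\bar x)(w_t-w_{\kappa(n,t)})+\sum_{j=1}^m\Lambda^{n,j}\sigma(\bar x)\int_{\kappa(n,t)}^t(w_s^j-w_{\kappa(n,t)}^j)\,dw_s .
$$
Inserting the middle term into the expansion and using $\sigma^{n,(u,j)}(\bar x)=\sigma^{(u,j)}(\bar x)/(1+n^{-1}|\bar x|^{2\rho})$ together with the definitions of $\Lambda^j\sigma$ and of $\Lambda^{n,j}\sigma$ gives the algebraic identity $\sum_{u=1}^d\frac{\partial\sigma^{(i,k)}(\bar x)}{\partial x^u}\,[\sigma^n(\bar x)(w_t-w_{\kappa(n,t)})]^u=[\sigma_1^n(t,\bar x)]_{(i,k)}$. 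Hence the first-order term cancels $\sigma_1^n(t,x_{\kappa(n,t)}^n)$ and
$$
\sigma(x_t^n)-\sigma(x_{\kappa(n,t)}^n)-\sigma_1^n(t,x_{\kappa(n,t)}^n)=R^{(1)}_t+R^{(2)}_t+R^{(3)}_t ,
$$
where $R^{(2)}_t$ collects the drift contribution $D\sigma(\bar x)\,b^n(\bar x)(t-\kappa(n,t))$ and $R^{(3)}_t$ collects $D\sigma(\bar x)$ applied to the iterated integral $\sum_j\Lambda^{n,j}\sigma(\bar x)\int_{\kappa(n,t)}^t(w_s^j-w_{\kappa(n,t)}^j)\,dw_s$.

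It then remains to bound each remainder in $\mathcal L^p$ by $Kn^{-p}$. For $R^{(2)}_t$ one uses $t-\kappa(n,t)\le n^{-1}$, the bounds $|D\sigma(\bar x)|\le K(1+|\bar x|)^{\rho/2}$ and $|b^n(\bar x)|\le|b(\bar x)|\le K(1+|\bar x|)^{\rho+1}$ from Remark \ref{rem:poly:b}, and Lemma \ref{lem:mbound}, so that $E|R^{(2)}_t|^p\le Kn^{-p}E(1+|\bar x|)^{(3\rho+2)p/2}\le Kn^{-p}$ whenever $(3\rho+2)p/2\le p_0$, in particular for $p\le\frac{p_0}{3\rho+1}$. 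For $R^{(3)}_t$, the coefficients $D\sigma(\bar x)$ and $\Lambda^{n,j}\sigma(\bar x)$ are $\mathscr{F}_{\kappa(n,t)}$-measurable while each scalar iterated Wiener integral $\int_{\kappa(n,t)}^t(w_s^j-w_{\kappa(n,t)}^j)\,dw_s^{k}$ has conditional $\mathcal L^q$-norm of order $n^{-1}$; conditioning on $\mathscr{F}_{\kappa(n,t)}$ and using $|D\sigma(\bar x)\Lambda^{n,j}\sigma(\bar x)|\le K(1+|\bar x|)^{(3\rho+2)/2}$ again yields $E|R^{(3)}_t|^p\le Kn^{-p}$ under the same moment budget. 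For $R^{(1)}_t$ one bounds $|x_t^n-\bar x|^2$ by the sum of squares of the three terms in the increment: for the drift part and the iterated-integral part one absorbs one power of $1+|\bar x|$ via $(1+|x_t^n|+|\bar x|)^{(\rho-2)/2}\le(1+|\bar x|)^{(\rho-2)/2}$ (the exponent being $\le 0$ when $\rho<2$, and handled by H\"older otherwise) and argues as for $R^{(2)},R^{(3)}$, while for the part $|\sigma^n(\bar x)(w_t-w_{\kappa(n,t)})|^2$ one conditions on $\mathscr{F}_{\kappa(n,t)}$ to pair $E|w_t-w_{\kappa(n,t)}|^{2p}\le Kn^{-p}$ with $E(1+|\bar x|)^{(3\rho+2)p/2}\le K$. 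Throughout, Lemma \ref{lem:mbound}, Lemma \ref{lem:one:step} and Remark \ref{rem:poly:b} give $E|R^{(1)}_t|^p\le Kn^{-p}$ for $p\le\frac{p_0}{3\rho+1}$; adding the three bounds and taking the supremum over $t$ completes the proof.

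The conceptual heart — and the only genuinely non-routine step — is the exact cancellation identity $\sum_u\frac{\partial\sigma^{(i,k)}(\bar x)}{\partial x^u}[\sigma^n(\bar x)(w_t-w_{\kappa(n,t)})]^u=[\sigma_1^n(t,\bar x)]_{(i,k)}$, which is precisely what the operators $\Lambda^j$ and $\Lambda^{n,j}\sigma$ were designed to produce. The remaining difficulty is purely the moment accounting for $R^{(1)}_t$, where one must exploit both the sub-quadratic growth exponent $\frac{\rho-2}{2}$ of the mean value remainder and the martingale structure of the dominant part of $x_t^n-x_{\kappa(n,t)}^n$ (so that conditioning, rather than Lemma \ref{lem:one:step} alone, produces the gain), in order not to exceed the moment bound of order $p_0$ available from Lemma \ref{lem:mbound}.
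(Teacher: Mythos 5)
Your proof is correct and follows essentially the same route as the paper's: Lemma \ref{lem:goodlemma} with $\gamma=\tfrac{\rho-2}{2}$ applied componentwise, substitution of the one-step increment of \eqref{eq:milstein:diffusion} so that the first-order Brownian term cancels exactly with $\sigma_1^n(t,x_{\kappa(n,t)}^n)$, and then $\mathcal L^p$ moment accounting for the drift, iterated-integral and mean-value remainders via Remark \ref{rem:poly:b} and Lemmas \ref{lem:mbound} and \ref{lem:one:step}. The only (immaterial) difference is that the paper keeps the remaining stochastic term in the form $\int_{\kappa(n,t)}^t\sigma_1^n(s,x_{\kappa(n,s)}^n)\,dw_s$ and controls it with the elementary stochastic-integral inequality, H\"older's inequality and Lemma \ref{lem:sign1:rate}, rather than writing it as an explicit L\'evy-area term and conditioning on $\mathscr{F}_{\kappa(n,t)}$.
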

\begin{proof}
First, one observes that
\begin{align} \label{eq:sig1:g}
\sum_{u=1}^{d}&\frac{\partial \sigma^{(k,v)}(x_{\kappa(n,t)}^n)}{\partial x^u}(x_t^{n,u}-x_{\kappa(n,t)}^{n,u}) =\sum_{u=1}^{d}\frac{\partial \sigma^{(k,v)}(x_{\kappa(n,t)}^n)}{\partial x^u} \bigg( \int_{\kappa(n,t)}^{t}b^{n,u}(x_{\kappa(n,s)}^n)ds \notag
\\
&\qquad+ \int_{\kappa(n,t)}^{t}\sum_{j=1}^{m}\sigma^{n,(u,j)}(x_{\kappa(n,s)}^n)dw_s^j + \int_{\kappa(n,t)}^{t}\sum_{j=1}^{m}\sigma_1^{n,(u,j)}(s,x_{\kappa(n,s)}^n)dw_s^j \bigg) \notag
\\
&=\sum_{u=1}^{d}\frac{\partial \sigma^{(k,v)}(x_{\kappa(n,t)}^n)}{\partial x^u} \int_{\kappa(n,t)}^{t}b^{n,u}(x_{\kappa(n,s)}^n)ds+\sigma_1^{n,(k,v)}(t,x_{\kappa(n,t)}^n) \notag
\\
&\qquad+\sum_{u=1}^{d}\frac{\partial \sigma^{(k,v)}(x_{\kappa(n,t)}^n)}{\partial x^u} \int_{\kappa(n,t)}^{t}\sum_{j=1}^{m}\sigma_1^{n,(u,j)}(s,x_{\kappa(n,s)}^n)dw_s^j
\end{align}
for any $t \in [0,T]$. Also, one can write the following,
\begin{align*}
\sigma^{(k,v)}&(x_t^n)-\sigma^{(k,v)}(x_{\kappa(n,t)}^n)-\sigma_1^{n,(k,v)}(t, x_{\kappa(n,t)}^n)
\\
&=\sigma^{(k,v)}(x_t^n)-\sigma^{(k,v)}(x_{\kappa(n,t)}^n)-\sum_{u=1}^{d}\frac{\partial \sigma^{(k,v)}(x_{\kappa(n,t)}^n)}{\partial x^u}(x_t^{n,u}-x_{\kappa(n,t)}^{n,u})
\\
&\qquad+\sum_{u=1}^{d}\frac{\partial \sigma^{(k,v)}(x_{\kappa(n,t)}^n)}{\partial x^u}(x_t^{n,u}-x_{\kappa(n,t)}^{n,u})- \sigma_1^{n,(k,v)}(t, x_{\kappa(n,t)}^n)
\end{align*}
 and hence due to equation \eqref{eq:sig1:g}, Remark \ref{rem:poly:b} and Lemma \ref{lem:goodlemma} (with $\gamma=(\rho-2)/2$), one obtains
\begin{align*}
|\sigma^{(k,v)}&(x_t^n)-\sigma^{(k,v)}(x_{\kappa(n,t)}^n)-\sigma_1^{n,(k,v)}(t, x_{\kappa(n,t)}^n)|
\\
&\leq L(1+|x_t^n|+|x_{\kappa(n,t)}^n|)^{\frac{\rho-2}{2}}|x_t^{n}-x_{\kappa(n,t)}^{n}|^2
\\
&\qquad+\Big|\sum_{u=1}^{d}\frac{\partial \sigma^{(k,v)}(x_{\kappa(n,t)}^n)}{\partial x^u}\int_{\kappa(n,t)}^t b_s^{n,u} (x_{\kappa(n,s)}^n)ds \Big|
\\
&+\Big|\sum_{u=1}^{d}\frac{\partial \sigma^{(k,v)}(x_{\kappa(n,t)}^n)}{\partial x^u} \int_{\kappa(n,t)}^{t}\sum_{j=1}^{m}\sigma_1^{n,(u,j)}(s,x_{\kappa(n,s)}^n)dw_s^j\Big|
\end{align*}
for any $t \in [0,T]$. Thus, on the application of H\"older's inequality and an elementary inequality of stochastic integrals along with Remarks [\ref{rem:poly:b}, \ref{rem:bn:sign:}], the following estimates are obtained,
\begin{align*}
E|\sigma^{(k,v)}&(x_t^n)-\sigma^{(k,v)}(x_{\kappa(n,t)}^n)-\sigma_1^{n,(k,v)}(t, x_{\kappa(n,t)}^n)|^p
\\
&\leq KE(1+|x_t^n|+|x_{\kappa(n,t)}^n|)^{\frac{\rho p}{2}}|x_t^{n}-x_{\kappa(n,t)}^{n}|^{2p}
\\
&\qquad+Kn^{-p}E(1+|x_{\kappa(n,t)}^n|)^{\frac{\rho p}{2}+(\rho+1)p}
\\
&+Kn^{-\frac{p}{2}+1} \int_{\kappa(n,t)}^{t}E(1+|x_{\kappa(n,t)}^n|)^\frac{\rho p}{2}|\sigma_1^{n}(s,x_{\kappa(n,s)}^n)|^pds
\end{align*}
for any $t\in[0,T]$. One again uses H\"older's inequality  and obtains,
\begin{align*}
E&|\sigma^{(k,v)}(x_t^n)-\sigma^{(k,v)}(x_{\kappa(n,t)}^n)-\sigma_1^{n,(k,v)}(t, x_{\kappa(n,t)}^n)|^p
\\
&\leq K\{E(1+|x_t^n|+|x_{\kappa(n,t)}^n|)^\frac{p_0}{2}\}^{\frac{\rho p}{p_0}}\{E|x_t^{n}-x_{\kappa(n,t)}^{n}|^\frac{2pp_0}{p_0-\rho p}\}^\frac{p_0-\rho p}{p_0}+ Kn^{-p}
\\
&+Kn^{-\frac{p}{2}+1} \int_{\kappa(n,t)}^{t}\{E(1+|x_{\kappa(n,t)}^n|)^{p_0}\}^\frac{\rho p}{2p_0}\{E|\sigma_1^{n}(s,x_{\kappa(n,s)}^n)|^\frac{2pp_0}{2p_0-\rho p}\}^\frac{2p_0-\rho p}{2p_0}ds
\end{align*}
for any $t \in [0,T]$. The proof is completed by Lemmas [\ref{lem:mbound}, \ref{lem:one:step}, \ref{lem:sign1:rate}].
\end{proof}

Let us at this point introduce $e_t^n:=x_t-x_{t}^n$ for any $t \in [0,T]$.
\begin{lemma} \label{lem:a-tilde a:rate:new}
Let Assumptions A-\ref{as:sde:initial} to  A-\ref{as:sde:lipschitz:sigma'} be satisfied. Then, for every $n \in \mathbb{N}$  and $t \in [0,T]$,
\begin{equation} \label{diff:b}
E\int_{0}^{t} |e_s^n|^{p-2}e_s^n(b(x_s^n)-b(x_{\kappa(n,s)}^n))ds\leq K  \int_{0}^t \sup_{0 \leq r \leq s}E|e_r^n|^p   ds + K n^{-p}
\end{equation}
for $p=2$. Furthermore, if $p_0 \ge 4(3\rho+1)$, then \eqref{diff:b} holds for any $p \leq \frac{p_0}{3\rho+1}$.
\end{lemma}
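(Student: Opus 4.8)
The plan is to isolate the part of $b(x_s^n)-b(x_{\kappa(n,s)}^n)$ that is linear in the one-step increment, dispatch the quadratic remainder directly, and exploit the conditional-martingale structure of the one-step diffusion increment for what is left. First I would apply Lemma \ref{lem:goodlemma} componentwise to $b$ with $\gamma=\rho-1$ (which is Assumption A-\ref{as:sde:lipschitz:b'}) to write $b(x_s^n)-b(x_{\kappa(n,s)}^n)=Db(x_{\kappa(n,s)}^n)(x_s^n-x_{\kappa(n,s)}^n)+R_s^n$, where $|R_s^n|\le L(1+|x_s^n|+|x_{\kappa(n,s)}^n|)^{\rho-1}|x_s^n-x_{\kappa(n,s)}^n|^2$. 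The term $E\int_0^t|e_s^n|^{p-2}e_s^n R_s^n\,ds$ is then controlled by Young's inequality (to peel off $K|e_s^n|^p$) followed by H\"older's inequality, Lemma \ref{lem:mbound} and Lemma \ref{lem:one:step} applied to $|x_s^n-x_{\kappa(n,s)}^n|^{2p}$, which supplies the $Kn^{-p}$. Admissibility of the conjugate H\"older exponents is exactly the role of the moment hypothesis: for $p=2$ one needs $p_0\ge 2(3\rho+1)$, and for general $p$ one needs $p_0\ge 4(3\rho+1)$ with $p\le\frac{p_0}{3\rho+1}$; at the extreme value $p=\frac{p_0}{3\rho+1}$ the exponents are forced and satisfy their relation with equality.

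Next I would substitute $x_s^n-x_{\kappa(n,s)}^n=(s-\kappa(n,s))b^n(x_{\kappa(n,s)}^n)+N_s^n$, where $N_s^n:=\int_{\kappa(n,s)}^s\tilde\sigma^n(r,x_{\kappa(n,s)}^n)\,dw_r$ (using $\kappa(n,r)=\kappa(n,s)$ on the subinterval). The drift part is routine: since $|b^n|\le|b|\le K(1+|\cdot|)^{\rho+1}$ and $|Db(\cdot)|\le K(1+|\cdot|)^{\rho}$ by Remark \ref{rem:poly:b}, a factor $n^{-1}$ is free, and Young's inequality kept in the form $n^{-1}(1+|x_{\kappa(n,s)}^n|)^{2\rho+1}|e_s^n|^{p-1}\le\frac1p n^{-p}(1+|x_{\kappa(n,s)}^n|)^{(2\rho+1)p}+\frac{p-1}p|e_s^n|^p$, together with Lemma \ref{lem:mbound} and $(2\rho+1)p\le p_0$, yields the bound $K\int_0^t\sup_{0\le r\le s}E|e_r^n|^p\,ds+Kn^{-p}$.

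The heart of the matter is the diffusion contribution $E\int_0^t|e_s^n|^{p-2}e_s^n\,Db(x_{\kappa(n,s)}^n)N_s^n\,ds$. I would write $|e_s^n|^{p-2}e_s^n=|e_{\kappa(n,s)}^n|^{p-2}e_{\kappa(n,s)}^n+G_s^n$; the first summand times $Db(x_{\kappa(n,s)}^n)$ is $\mathscr{F}_{\kappa(n,s)}$-measurable and $N_s^n$ is a conditional martingale increment, so that part vanishes in expectation. For $G_s^n$, using $\bigl||a|^{p-2}a-|b|^{p-2}b\bigr|\le K(|a|+|b|)^{p-2}|a-b|$ and $e_s^n-e_{\kappa(n,s)}^n=(x_s-x_{\kappa(n,s)})-(x_s^n-x_{\kappa(n,s)}^n)$, I would decompose the increment-difference into (i) the $\mathscr{F}_{\kappa(n,s)}$-measurable drift piece $(s-\kappa(n,s))(b(x_{\kappa(n,s)})-b^n(x_{\kappa(n,s)}^n))$, which is orthogonal to $N_s^n$; (ii) the higher-order drift piece $\int_{\kappa(n,s)}^s(b(x_r)-b(x_{\kappa(n,s)}))\,dr$, of size $n^{-3/2}$ in every relevant $L^q$; and (iii) the stochastic piece $\int_{\kappa(n,s)}^s(\sigma(x_r)-\tilde\sigma^n(r,x_{\kappa(n,s)}^n))\,dw_r$, split further so that its $\mathscr{F}_{\kappa(n,s)}$-measurable part is $(\sigma(x_{\kappa(n,s)})-\sigma^n(x_{\kappa(n,s)}^n))(w_s-w_{\kappa(n,s)})$ and the remainder involves $\int(\sigma(x_r)-\sigma(x_{\kappa(n,s)}))\,dw_r$ and $\int\sigma_1^n(r,x_{\kappa(n,s)}^n)\,dw_r$. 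Pairing these against $N_s^n=\sigma^n(x_{\kappa(n,s)}^n)(w_s-w_{\kappa(n,s)})+\int\sigma_1^n\,dw_r$, I would use conditional orthogonality for the $\mathscr{F}_{\kappa(n,s)}$-measurable pieces, Cauchy--Schwarz together with $E|w_s-w_{\kappa(n,s)}|^4\le Kn^{-2}$, the one-step estimates (Lemma \ref{lem:one:step} and the standard increment estimate for the true solution of \eqref{eq:sde}), Lemma \ref{lem:sign1:rate}, Lemma \ref{lem:si-sin:rate}, Lemma \ref{lem:b-bn:rate} and Corollary \ref{cor:tilde:sig}, and finally Young's inequality in the form $n^{-1}(1+|\cdot|)^{c}|e|^{p-1}\le\frac1p n^{-p}(1+|\cdot|)^{cp}+\frac{p-1}p|e|^p$ (valid when $cp\le p_0$) to convert the mixed $n^{-1}$-times-$|e_{\kappa(n,s)}^n|$ terms into $Kn^{-p}+K\sup_{0\le r\le s}E|e_r^n|^p$. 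Assembling the pieces gives \eqref{diff:b}.

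The main obstacle is precisely this diffusion cross-term. A naive H\"older/Burkholder bound of $E\int_0^t|e_s^n|^{p-2}e_s^n\,Db(x_{\kappa(n,s)}^n)N_s^n\,ds$ only produces $n^{-1}$ (two increments of size $n^{-1/2}$ multiplied, which also is the obstruction in the classical $\rho=0$ case), so one is forced into the iterated procedure of freezing the $|e|^{p-2}e$ factor at $\kappa(n,s)$, using conditional orthogonality against $N_s^n$, and then peeling off the next leading $\mathscr{F}_{\kappa(n,s)}$-measurable term, all while never discarding signs prematurely and keeping every $n^{-1}$ attached to a polynomial growth factor when Young's inequality is applied; it is the bookkeeping of these polynomial growth factors against the $p_0$ available moments through the repeated Young/H\"older steps that pins down the thresholds $p_0\ge 2(3\rho+1)$ for $p=2$ and $p_0\ge 4(3\rho+1)$, $p\le\frac{p_0}{3\rho+1}$ for general $p$.
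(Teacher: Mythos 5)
Your overall architecture matches the paper's: Lemma \ref{lem:goodlemma} with $\gamma=\rho-1$ produces the quadratic remainder ($T_1$ in the paper), the one-step increment is split into its drift and diffusion parts ($T_2$ and $T_3$), and the leading $\mathscr{F}_{\kappa(n,s)}$-measurable factor $|e_{\kappa(n,s)}^n|^{p-2}e_{\kappa(n,s)}^n$ is peeled off so that the martingale property kills the main diffusion contribution. The remainder term $T_1$, the drift term $T_2$, and the leading part of the diffusion term are all handled essentially as in the paper. The gap is in your treatment of the correction $G_s^n$ against $N_s^n$, and it is not cosmetic: as written your argument delivers rate $3/4$, not rate $1$.

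There are two linked problems. First, once you invoke $\bigl||a|^{p-2}a-|b|^{p-2}b\bigr|\le K(|a|+|b|)^{p-2}|a-b|$ you have taken absolute values, so the subsequent ``conditional orthogonality'' claims are unavailable: the prefactor multiplying $e_s^n-e_{\kappa(n,s)}^n$ (the mean-value matrix hidden in that inequality) depends on $e_s^n$ and is not $\mathscr{F}_{\kappa(n,s)}$-measurable, and $E[|X||Y|]$ admits no cancellation. The paper avoids this by applying It\^o's formula to $|e_s^n|^{p-2}e_s^{n,k}$ over $[\kappa(n,s),s]$, which gives an exact \emph{signed} decomposition of $G_s^n$ into $dr$-integrals and $dw$-integrals; the $dw$-integrals paired with $N_s^n$ are then computed exactly ($E[\int f\,dw\int g\,dw]=E\int fg\,dr$ in the terms $T_{33},T_{34}$), converting two factors of order $n^{-1/2}$ into a single $n^{-1}$ attached to the integrand product. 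Second, and decisively, you split the diffusion remainder into $\int(\sigma(x_r)-\sigma(x_{\kappa(n,s)}))\,dw_r$ and $\int\sigma_1^n(r,x_{\kappa(n,s)}^n)\,dw_r$ as \emph{separate} pieces. Each integrand is only $O(n^{-1/2})$, so each stochastic integral is $O(n^{-1})$, its product with $N_s^n=O(n^{-1/2})$ is $O(n^{-3/2})$ in $L^1$, and Young's inequality against $|e_s^n|^{p-2}$ can then yield at best $n^{-3p/4}$ --- no regrouping of H\"older exponents fixes this, because the total smallness budget is $n^{-3/2}$ and it must be raised to the power $p/2$. The order-one rate requires keeping the combination $\sigma(x_r^n)-\sigma(x_{\kappa(n,r)}^n)-\sigma_1^n(r,x_{\kappa(n,r)}^n)$ intact, which is $O(n^{-1})$ by Lemma \ref{lem:sig-sig:rate} --- the one lemma your citation list omits, and the one that encodes the Milstein cancellation (the correction $\sigma_1^n$ cancelling the leading part of the diffusion increment). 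This is exactly the splitting \eqref{eq:sig:plit} used in the paper's estimates of $T_{33}+T_{34}$ and $T_{35}$; without it the lemma, and hence the rate in Theorem \ref{thm:main}, degenerates to the Euler rate.
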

\begin{proof}
First, one writes the following,
\begin{align}
&E\int_{0}^t |e_s^n|^{p-2}  e_s^n(b(x_s^n)-b(x_{\kappa(n,s)}^n))ds  \notag
\\
& = \sum_{k=1}^d E\int_{0}^t |e_s^n|^{p-2} e_s^{n,k}(b^k(x_s^n)-b^k(x_{\kappa(n,s)}^n))ds \notag
\\
& = \sum_{k=1}^d E\int_{0}^t |e_s^n|^{p-2} e_s^{n,k}(b^k(x_s^n)-b^k(x_{\kappa(n,s)}^n)-\sum_{i=1}^{d}\frac{\partial b^k(x_{\kappa(n,s)}^n)}{\partial x^i}(x_s^{n,i}-x_{\kappa(n,s)}^{n,i}))ds \notag
\\
&\qquad+\sum_{k=1}^d E\int_{0}^t |e_s^n|^{p-2} e_s^{n,k} \sum_{i=1}^{d}\frac{\partial b^k(x_{\kappa(n,s)}^n)}{\partial x^i}(x_s^{n,i}-x_{\kappa(n,s)}^{n,i}) \notag
\\
& \leq \sum_{k=1}^d E\int_{0}^t |e_s^n|^{p-1} |b^k(x_s^n)-b^k(x_{\kappa(n,s)}^n)-\sum_{i=1}^{d}\frac{\partial b^k(x_{\kappa(n,s)}^n)}{\partial x^i}(x_s^{n,i}-x_{\kappa(n,s)}^{n,i})|ds \notag
\\
&\qquad+\sum_{k=1}^d E\int_{0}^t |e_s^n|^{p-2} e_s^{n,k} \sum_{i=1}^{d}\frac{\partial b^k(x_{\kappa(n,s)}^n)}{\partial x^i}\int_{\kappa(n,s)}^{s} b^{n,i}(x_{\kappa(n,r)}^n)dr \notag
\\
&\qquad+\sum_{k=1}^d E\int_{0}^t |e_s^n|^{p-2} e_s^{n,k} \sum_{i=1}^{d}\frac{\partial b^k(x_{\kappa(n,s)}^n)}{\partial x^i}\int_{\kappa(n,s)}^{s} \sum_{l=1}^{m}\bar \sigma^{n,(i,l)}(x_{\kappa(n,r)}^n)dw_r^l \notag
\\
&=: T_1+T_2+T_3 \label{eq:T1+T2+T3}
\end{align}
for any $t \in [0,T]$. Notice that when $p=2$, $|e_s^n|^{p-2}$ does not appear in $T_2$ and $T_3$ of the above equation.  One can keep note of this in mind in the following calculations because  their estimations require less computational efforts as compared to the case of $p \geq 4$.

$T_1$ can be estimated by using Lemma \ref{lem:goodlemma} (with $\gamma=\rho-1$) as below,
\begin{align*}
T_1&:=\sum_{k=1}^d E\int_{0}^t |e_s^n|^{p-1} |b^k(x_s^n)-b^k(x_{\kappa(n,s)}^n)-\sum_{i=1}^{d}\frac{\partial b^k(x_{\kappa(n,s)}^n)}{\partial x^i}(x_s^{n,i}-x_{\kappa(n,s)}^{n,i})|ds
\\
& \leq E\int_{0}^t |e_s^n|^{p-1} (1+|x_s^n|+|x_{\kappa(n,s)}^n|)^{\rho-1}|x_s^{n}-x_{\kappa(n,s)}^{n}|^2 ds \notag
\end{align*}
which on the application of Young's inequality and H\"older's inequality gives
\begin{align*}
T_1 &\leq \int_{0}^t E|e_s^n|^{p} ds + \int_{0}^t E(1+|x_s^n|+|x_{\kappa(n,s)}^n|)^{(\rho-1)p}|x_s^{n}-x_{\kappa(n,s)}^{n}|^{2p} ds \notag
\\
&\leq \int_{0}^t E|e_s^n|^{p} ds + \int_{0}^t \{E(1+|x_s^n|+|x_{\kappa(n,s)}^n|)^{p_0}\}^\frac{(\rho-1)p}{p_0}
\\
& \qquad \times \{E|x_s^{n}-x_{\kappa(n,s)}^{n}|^\frac{2pp_0}{p_0-(\rho-1)p}\}^\frac{p_0-(\rho-1)p}{p_0} ds \notag
\end{align*}
and then by using Lemmas [\ref{lem:mbound}, \ref{lem:one:step}], one obtains
\begin{align} \label{eq:T1}
T_1 \leq Kn^{-p}+\int_{0}^t \sup_{0 \leq r \leq s}E|e_r^n|^{p} ds
\end{align}
for any $t \in [0,T]$.

For $T_2$, one uses Schwarz, Young's and H\"older's inequalities and obtains the following estimates,
\begin{align}
T_2 &:=  \sum_{k=1}^d  E \int_{0}^t |e_s^n|^{p-2}e_s^{n,k} \sum_{i=1}^d\frac{\partial b^k(x_{\kappa(n,r)}^n)}{\partial x^i} \int_{\kappa(n,s)}^s  b^{n,i}(x_{\kappa(n,r)}^n)dr ds \notag
\\
& \leq  K  E \int_{0}^t  |e_s^{n}|^p ds +K n^{-p+1}\sum_{k,i=1}^d  E \int_{0}^t  \int_{\kappa(n,s)}^s |\frac{\partial b^k(x_{\kappa(n,r)}^n)}{\partial x^i}|^p |b^{n,i}(x_{\kappa(n,r)}^n)|^p dr ds \notag
\end{align}
which on the application of Remarks [\ref{rem:poly:b}, \ref{rem:bn:sign:}] yields
\begin{align*}
T_2 \leq K   \int_{0}^t  E|e_s^{n}|^p ds +K n^{-p+1}  E \int_{0}^t  \int_{\kappa(n,s)}^s (1+|x_{\kappa(n,r)}^n|)^{(2\rho+1)p} dr ds \notag
\end{align*}
for any $t \in [0,T]$. Furthermore, due to Lemma \ref{lem:mbound}, the following estimates are obtained,
\begin{align}
T_2 &\leq K   n^{-p} + \int_{0}^t  \sup_{0 \leq r \leq s}E |e_r^{n}|^p ds \label{eq:T2}
\end{align}
for any $t \in [0,T]$.

One can now proceed to the estimation of $T_3$. For this, one uses It\^o's formula and obtains the following estimates,
\begin{align} 
|e_s^n|^{p-2} & e_s^{n,k}=|e_{\kappa(n,s)}^n|^{p-2} e_{\kappa(n,s)}^{n,k} + \int_{\kappa(n,s)}^s |e_r^n|^{p-2}(b^k(x_r)-b^{n,k}(x_{\kappa(n,r)}^n) ) dr \notag
\\
&+  \int_{\kappa(n,s)}^s |e_r^n|^{p-2} \sum_{j=1}^m\big(\sigma^{(k,j)}(x_r)-\tilde{\sigma}^{n,(k,j)}(r, x_{\kappa(n,r)}^n)\big)dw_r^j \notag
\\
& + (p-2)  \int_{\kappa(n,s)}^s e_r^{n,k} |e_r^n|^{p-4} e_r^n (b(x_r)-b^n(x_{\kappa(n,r)}^n)) dr \notag
\\
&+ (p-2) \int_{\kappa(n,s)}^s e_r^{n,k} |e_r^n|^{p-4} e_r^n (\sigma(x_r)-\tilde{\sigma}^n(r, x_{\kappa(n,r)}^n))  dw_r  \notag
\\
& + \frac{(p-2)(p-4)}{2}  \int_{\kappa(n,s)}^s e_r^{n,k} |e_r^n|^{p-6}| (\sigma(x_r)-\tilde{\sigma}^n(r, x_{\kappa(n,r)}^n))^{*}e_r^n|^2 dr  \notag
\\
& + \frac{p-2}{2}  \int_{\kappa(n,s)}^s e_r^{n,k}|e_r^n|^{p-4}|\sigma(x_r)-\tilde{\sigma}^n(r, x_{\kappa(n,r)}^n)|^2 dr \notag
\\
& + (p-2) \int_{\kappa(n,s)}^s \sum_{j=1}^m (\sigma^{(k,j)}(x_r)-\tilde{\sigma}^{n,(k,j)}(r, x_{\kappa(n,r)}^n))|e_r^n|^{p-4} \notag
\\
& \qquad \times\sum_{u=1}^d e_r^{n,u} (\sigma^{(u,j)}(x_r)-\tilde{\sigma}^{n,(u,j)}(r, x_{\kappa(n,r)}^n)) dr \notag
\end{align}
for any $s \in [0,T]$. In the above equation, notice that when $p=2$, the last five terms are zero, $|e_{\kappa(n,s)}^n|^{p-2}$ is absent from the first term and $|e_r^n|^{p-2}$ does not appear in the second and third terms. This on substituting  in $T_3$ and then using Schwarz inequality gives the following estimates,
\begin{align}
T_3 & \leq  \sum_{k=1}^d E\int_{0}^t |e_{\kappa(n,s)}^n|^{p-2} e_{\kappa(n,s)}^{n,k} \int_{\kappa(n,s)}^s \sum_{l=1}^m \sum_{i=1}^d \frac{\partial b^k(x_{\kappa(n,r)}^n)}{\partial x^i}  \tilde{\sigma}^{n,(i,l)}(r, x_{\kappa(n,r)}^n)dw_r^l ds \notag
\\
&+\sum_{k=1}^d E\int_{0}^t \int_{\kappa(n,s)}^s |e_r^n|^{p-2}|b(x_r)- b^{n}(x_{\kappa(n,r)}^n) | dr \notag
\\
& \qquad\times  \Big|\int_{\kappa(n,s)}^s \sum_{l=1}^m \sum_{i=1}^d \frac{\partial b^k(x_{\kappa(n,r)}^n)}{\partial x^i}  \tilde{\sigma}^{n,(i,l)}(r, x_{\kappa(n,r)}^n)dw_r^l \Big|ds \notag
\\
& + \sum_{k=1}^d E\int_{0}^t \int_{\kappa(n,s)}^s |e_r^n|^{p-2} \sum_{j=1}^m\big(\sigma^{(k,j)}(x_r)-\tilde{\sigma}^{n,(k,j)}(r, x_{\kappa(n,r)}^n)\big)dw_r^j \notag
\\
& \qquad\times \int_{\kappa(n,s)}^s \sum_{l=1}^m \sum_{i=1}^d\frac{\partial b^k(x_{\kappa(n,r)}^n)}{\partial x^i}  \tilde{\sigma}^{n,(i,l)}(r, x_{\kappa(n,r)}^n)dw_r^l ds \notag
\\
& + K \sum_{k=1}^d E\int_{0}^t  \int_{\kappa(n,s)}^s e_r^{n,k} |e_r^n|^{p-4} e_r^n (\sigma(x_r)-\tilde{\sigma}^n(r, x_{\kappa(n,r)}^n))  dw_r \notag
\\
& \qquad \times \int_{\kappa(n,s)}^s \sum_{l=1}^m \sum_{i=1}^d \frac{\partial b^k(x_{\kappa(n,r)}^n)}{\partial x^i}  \tilde{\sigma}^{n,(i,l)}(r, x_{\kappa(n,r)}^n)dw_r^l  ds \notag
\\
& + K \sum_{k=1}^d E\int_{0}^t   \int_{\kappa(n,s)}^s |e_r^n|^{p-3}|\sigma(x_r)-\tilde{\sigma}^n(r, x_{\kappa(n,r)}^n)|^2 dr \notag
\\
& \qquad \times \Big|\int_{\kappa(n,s)}^s \sum_{l=1}^m \sum_{i=1}^d \frac{\partial b^k(x_{\kappa(n,r)}^n)}{\partial x^i}  \tilde{\sigma}^{n,(i,l)}(r, x_{\kappa(n,r)}^n)dw_r^l \Big| ds \notag
\\
&=:T_{31}+T_{32}+T_{33}+T_{34}+T_{35}  \label{eq:T31+T35}
\end{align}
for any $t \in [0,T]$. In order to estimate $T_{31}$, one writes,
\begin{align*}
T_{31}&:=K\sum_{k=1}^d   E \int_{0}^t |e_{\kappa(n,s)}^n|^{p-2} e_{\kappa(n,s)}^{n,k}
\\
& \qquad \times \int_{\kappa(n,s)}^s\sum_{l=1}^{m} \sum_{i=1}^d\frac{\partial b^k(x_{\kappa(n,r)}^n)}{\partial x^i}  \tilde{\sigma}^{n,(i,l)}(r, x_{\kappa(n,r)}^n)dw_r^l ds \notag
\\
& =K\sum_{k=1}^d   E \int_{0}^t |e_{\kappa(n,s)}^n|^{p-2} e_{\kappa(n,s)}^{n,k}
\\
& \qquad \times \int_{\kappa(n,s)}^s\sum_{l=1}^{m} \sum_{i=1}^d\frac{\partial b^k(x_{\kappa(n,r)}^n)}{\partial x^i}  \sigma^{n,(i,l)}(x_{\kappa(n,r)}^n)dw_r^l ds\notag
\\
& +K\sum_{k=1}^d  E \int_{0}^t |e_{\kappa(n,s)}^n|^{p-2} e_{\kappa(n,s)}^{n,k}
\\
& \qquad \times \int_{\kappa(n,s)}^s  \sum_{l=1}^{m} \sum_{i=1}^d\frac{\partial b^k(x_{\kappa(n,r)}^n)}{\partial x^i}  \sigma_1^{n,(i,l)}(r, x_{\kappa(n,r)}^n)dw_r^l ds\notag
\end{align*}
for any $t \in [0,T]$. In the above, notice that first term is zero. Then, on using the Young's inequality, H\"older's inequality and an elementary inequality of stochastic integrals, one obtains,
\begin{align}
T_{31} & \leq K  \int_{0}^t \sup_{0 \leq r \leq s}E|e_r^n|^p ds\notag
\\
& +K\sum_{k=1}^d   E \int_{0}^t \Big|\int_{\kappa(n,s)}^s\sum_{l=1}^{m} \sum_{i=1}^d \frac{\partial b^k(x_{\kappa(n,r)}^n)}{\partial x^i}  \sigma_1^{n,(i,l)}(r, x_{\kappa(n,r)}^n)dw_r^l\Big|^p ds\notag
\\
& \leq K  \int_{0}^t \sup_{0 \leq r \leq s}E|e_r^n|^p ds +Kn^{-\frac{p}{2}+1} E \int_{0}^t \int_{\kappa(n,s)}^s(1+|x_{\kappa(n,r)}^n|)^{\rho p} \notag
\\
&\qquad \times  |\sigma_1^n(r, x_{\kappa(n,r)}^n)|^p dr ds\notag
\\
& \leq K  \int_{0}^t \sup_{0 \leq r \leq s}E|e_r^n|^p ds +Kn^{-\frac{p}{2}+1}  \int_{0}^t \int_{\kappa(n,s)}^s \{E(1+|x_{\kappa(n,r)}^n|)^{p_0}\}^\frac{\rho p}{p_0}  \notag
\\
& \qquad \times \{E|\sigma_1(x_{\kappa(n,r)}^n)|^{\frac{p p_0}{p_0-\rho p}}\}^\frac{p_0-\rho p}{p_0} dr ds\notag
\end{align}
and then by using Lemmas [\ref{lem:mbound}, \ref{lem:sign1:rate}], one obtains
\begin{align}
T_{31}&\leq Kn^{-p}+K  \int_{0}^t \sup_{0 \leq r \leq s}E|e_r^n|^p ds\label{eq:T31}
\end{align}
for any $t \in [0,T]$. Moreover, for estimating $T_{32}$, one uses the following splitting,
\begin{align}
b(x_r)-\tilde{b}^{n}(x_{\kappa(n,r)}^n) = & (b(x_r)-b(x_r^n))+(b(x_r^n)-b(x_{\kappa(n,r)}^n)) \notag
\\
& +(b(x_{\kappa(n,r)}^n) -b^{n}(x_{\kappa(n,r)}^n)) \label{eq:b:split}
\end{align}
and hence $T_{32}$ can be estimated by
\begin{align*}
&T_{32} :=\sum_{k=1}^d E\int_{0}^t \int_{\kappa(n,s)}^s |e_r^n|^{p-2}|b(x_r)-\tilde{b}^{n}(x_{\kappa(n,r)}^n) | dr
\\
& \qquad \times \Big|\int_{\kappa(n,s)}^s \sum_{l=1}^m \sum_{i=1}^d \frac{\partial b^k(x_{\kappa(n,r)}^n)}{\partial x^i}  \tilde{\sigma}^{n,(i,l)}(r, x_{\kappa(n,r)}^n)dw_r^l \Big|ds
\\
& \leq K \sum_{k=1}^d E\int_{0}^t \int_{\kappa(n,s)}^s \big(n^\frac{1}{p}|e_r^n|\big)^{p-1}(1+|x_r|+|x_r^n|)^{\rho} dr
\\
& \qquad \times n^{-\frac{p-1}{p}} \Big|\int_{\kappa(n,s)}^s \sum_{l=1}^m \sum_{i=1}^d\frac{\partial b^k(x_{\kappa(n,r)}^n)}{\partial x^i}  \tilde{\sigma}^{n,(i,l)}(r, x_{\kappa(n,r)}^n)dw_r^l \Big|ds
\\
& + K \sum_{k=1}^d E\int_{0}^t \int_{\kappa(n,s)}^s \big(n^\frac{1}{p}|e_r^n|\big)^{p-2}(1+|x_r^n|+|x_{\kappa(n,r)}^n|)^{\rho}|x_r^n-x_{\kappa(n,r)}^n| dr
\\
 & \qquad\times n^{-\frac{p-2}{p}} \Big|\int_{\kappa(n,s)}^s\sum_{l=1}^m \sum_{i=1}^d\frac{\partial b^k(x_{\kappa(n,r)}^n)}{\partial x^i}  \tilde{\sigma}^{n,(i,l)}(r, x_{\kappa(n,r)}^n)dw_r^l \Big|ds
 \\
 & + K \sum_{k=1}^d E\int_{0}^t \int_{\kappa(n,s)}^s \big(n^\frac{1}{p}|e_r^n|\big)^{p-2}|b(x_{\kappa(n,r)}^n)-b^{n}(x_{\kappa(n,r)}^n) | dr
 \\
 & \qquad\times n^{-\frac{p-2}{p}} \Big|\int_{\kappa(n,s)}^s \sum_{l=1}^m \sum_{i=1}^d \frac{\partial b^k(x_{\kappa(n,r)}^n)}{\partial x^i}  \tilde{\sigma}^{n,(i,l)}(r, x_{\kappa(n,r)}^n)dw_r^l \Big|ds
\end{align*}
which on the application of Young's inequality gives
\begin{align*}
& T_{32}  \leq K E \int_{0}^t n \int_{\kappa(n,s)}^s |e_r^n|^{p} dr ds
\\
& +  K n^{-p+1} \sum_{k=1}^d E \int_{0}^t  \int_{\kappa(n,s)}^s (1+|x_r|+|x_r^n|)^{\rho p} dr
\\
& \qquad \times \Big|\int_{\kappa(n,s)}^s \sum_{l=1}^m \sum_{i=1}^d \frac{\partial b^k(x_{\kappa(n,r)}^n)}{\partial x^i}  \tilde{\sigma}^{n,(i,l)}(r, x_{\kappa(n,r)}^n)dw_r^l \Big|^p ds
\\
&  + K n^{-\frac{p-2}{2}} \sum_{k=1}^d E \int_{0}^t  \int_{\kappa(n,s)}^s (1+|x_r^n|+|x_{\kappa(n,r)}^n|)^\frac{\rho p}{2} |x_r^n-x_{\kappa(n,r)}^n|^\frac{p}{2} dr
\\
& \qquad \times \Big|\int_{\kappa(n,s)}^s \sum_{l=1}^m \sum_{i=1}^d \frac{\partial b^k(x_{\kappa(n,r)}^n)}{\partial x^i}  \tilde{\sigma}^{n,(i,l)}(r, x_{\kappa(n,r)}^n)dw_r^l \Big|^\frac{p}{2} ds
\\
 & + K n^{-\frac{p-2}{2}} \sum_{k=1}^d E \int_{0}^t \int_{\kappa(n,s)}^s |b(x_{\kappa(n,r)}^n)-b^{n}(x_{\kappa(n,r)}^n) |^\frac{p}{2} dr
\\
& \qquad \times \Big|\int_{\kappa(n,s)}^s \sum_{l=1}^m \sum_{i=1}^d \frac{\partial b^k(x_{\kappa(n,r)}^n)}{\partial x^i}  \tilde{\sigma}^{n,(i,l)}(r, x_{\kappa(n,r)}^n)dw_r^l \Big|^\frac{p}{2} ds
\end{align*}
for any $t \in [0,T]$. Due to H\"older's inequality and an elementary inequality of stochastic integrals, one obtains
\begin{align*}
& T_{32}  \leq K   \int_{0}^t \sup_{0 \leq r \leq s} E|e_r^n|^{p}  ds
\\
& +  K n^{-p+1} \sum_{k=1}^d  \int_{0}^t  \Big[ E \Big(\int_{\kappa(n,s)}^s (1+|x_r|+|x_r^n|)^{\rho p}  dr \Big)^{\frac{p_0}{\rho p}}\Big]^\frac{\rho p}{p_0}
\\
& \qquad \times \Big[E\Big|\int_{\kappa(n,s)}^s \sum_{l=1}^m \sum_{i=1}^d \frac{\partial b^k(x_{\kappa(n,r)}^n)}{\partial x^i}  \tilde{\sigma}^{n,(i,l)}(r, x_{\kappa(n,r)}^n)dw_r^l \Big|^\frac{pp_0}{p_0-\rho p} \Big]^\frac{p_0-\rho p}{p_0} ds
\\
&  + K n^{-\frac{p-2}{2}} \sum_{k=1}^d  \int_{0}^t  \Big[E \Big(\int_{\kappa(n,s)}^s (1+|x_r^n|+|x_{\kappa(n,r)}^n|)^\frac{\rho p}{2} |x_r^n-x_{\kappa(n,r)}^n|^\frac{p}{2} dr \Big)^2\Big]^\frac{1}{2}
\\
& \qquad \times \Big[E\Big|\int_{\kappa(n,s)}^s \sum_{l=1}^m \sum_{i=1}^d \frac{\partial b^k(x_{\kappa(n,r)}^n)}{\partial x^i}  \tilde{\sigma}^{n,(i,l)}(r, x_{\kappa(n,r)}^n)dw_r^l \Big|^p\Big]^\frac{1}{2} ds
 \\
 & + K n^{-\frac{p-2}{2}} \sum_{k=1}^d  \int_{0}^t \Big[E \Big(\int_{\kappa(n,s)}^s |b(x_{\kappa(n,r)}^n)-b^{n}(x_{\kappa(n,r)}^n) |^\frac{p}{2} dr \Big)^2\Big]^\frac{1}{2}
\\
& \qquad \times \Big[E\Big|\int_{\kappa(n,s)}^s \sum_{l=1}^m \sum_{i=1}^d \frac{\partial b^k(x_{\kappa(n,r)}^n)}{\partial x^i}  \tilde{\sigma}^{n,(i,l)}(r, x_{\kappa(n,r)}^n)dw_r^l \Big|^p\Big]^\frac{1}{2} ds
\end{align*}
and this further implies due to H\"older's inequality,
\begin{align*}
&  T_{32} \leq K   \int_{0}^t \sup_{0 \leq r \leq s} E|e_r^n|^{p}  ds
\\
& +  K n^{-p+1}   \int_{0}^t  \Big[ n^{-\frac{p_0}{\rho p}+1}E \int_{\kappa(n,s)}^s (1+|x_r|+|x_r^n|)^{p_0}  dr \Big]^\frac{\rho p}{p_0}
\\
&  \times \Big[n^{-\frac{p p_0}{2 p_0-2\rho p}+1}E\int_{\kappa(n,s)}^s (1+|x_{\kappa(n,r)}^n|)^{\frac{p \rho p_0}{ p_0- \rho p}} | \tilde{\sigma}^n(r,x_{\kappa(n,r)}^n)|^{\frac{pp_0}{ p_0-\rho p}} dr \Big]^{\frac{ p_0- \rho p}{p_0}} ds
\\
&  + K n^{-\frac{p-2}{2}}   \int_{0}^t  \Big[n^{-1}E \int_{\kappa(n,s)}^s (1+|x_r^n|+|x_{\kappa(n,r)}^n|)^{\rho p} |x_r^n-x_{\kappa(n,r)}^n|^p dr \Big]^\frac{1}{2}
\\
& \qquad \times \Big[ n^{-\frac{p}{2}+1} E\int_{\kappa(n,s)}^s (1+|x_{\kappa(n,r)}^n|)^{\rho p} |\tilde{\sigma}^n(r, x_{\kappa(n,r)}^n)|^p dr \Big]^\frac{1}{2} ds
 \\
 & + K n^{-\frac{p-2}{2}}   \int_{0}^t \Big[n^{-1}E\int_{\kappa(n,s)}^s |b(x_{\kappa(n,r)}^n)-b^{n}(x_{\kappa(n,r)}^n) |^p dr \Big]^\frac{1}{2}
\\
& \qquad \times \Big[n^{-\frac{p}{2}+1}E\int_{\kappa(n,s)}^s (1+|x_{\kappa(n,r)}^n|)^{\rho p} |\tilde{\sigma}^n(r, x_{\kappa(n,r)}^n)|^p dr \Big]^\frac{1}{2} ds
\end{align*}
for any $t \in [0,T]$. Again, by using the H\"older's inequality along with Lemmas [\ref{lem:mb:true}, \ref{lem:mbound}, \ref{lem:b-bn:rate}],  one obtains the following estimates,
\begin{align*}
T_{32} & \leq K   \int_{0}^t \sup_{0 \leq r \leq s} E|e_r^n|^{p}  ds
\\
& +  K n^{-p}   \int_{0}^t \Big[n^{-\frac{pp_0}{2p_0-2 \rho p}+1}\int_{\kappa(n,s)}^s \{E(1+|x_{\kappa(n,r)}^n|)^{p_0}\}^\frac{\rho p}{p_0-\rho p}
\\
& \qquad \times \{E|\tilde{\sigma}^n(r, x_{\kappa(n,r)}^n)|^\frac{pp_0}{p_0-2\rho p}\}^{\frac{p_0-2\rho p}{p_0-\rho p}} dr \Big]^\frac{p_0-\rho p}{p_0} ds
\\
&  + K n^{-\frac{p-2}{2}}   \int_{0}^t  \Big[n^{-1} \int_{\kappa(n,s)}^s \{E(1+|x_r^n|+|x_{\kappa(n,r)}^n|)^{p_0}\}^\frac{\rho p}{p_0}
\\
&  \times \{E |x_r^n-x_{\kappa(n,r)}^n|^{\frac{pp_0}{p_0-\rho p}}\}^\frac{p_0-\rho p}{p_0} dr \Big]^\frac{1}{2}
\\
& \quad \times \Big[ n^{-\frac{p}{2}+1} \int_{\kappa(n,s)}^s \{E(1+|x_{\kappa(n,r)}^n|)^{p_0}\}^\frac{\rho p}{p_0}
\\
& \qquad \times \{E|\tilde{\sigma}^n(r, x_{\kappa(n,r)}^n)|^{\frac{pp_0}{p_0-\rho p}}\}^\frac{p_0-\rho p}{ p_0} dr \Big]^\frac{1}{2} ds
 \\
 & + K n^{-p}   \int_{0}^t \Big[n^{-\frac{p}{2}+1}\int_{\kappa(n,s)}^s \{E(1+|x_{\kappa(n,r)}^n|)^{p_0}\}^\frac{\rho p}{p_0}
 \\
 & \qquad \times \{E|\tilde{\sigma}^n(r, x_{\kappa(n,r)}^n)|^{\frac{p_0p}{p_0-\rho p}}\}^\frac{p_0-\rho p}{p_0} dr \Big]^\frac{1}{2} ds
\end{align*}
for any $t \in [0,T]$. Hence, on using Lemmas [\ref{lem:mbound}, \ref{lem:one:step}] and Corollary \ref{cor:tilde:sig}, one obtains,
\begin{align}
T_{32} & \leq  K n^{-p}+K \int_{0}^t  \sup_{0 \leq r \leq s}E|e_r^n|^p  ds\label{eq:T32}
\end{align}
for any $t\in [0,T]$. Further, one observes that the estimation of  $T_{33}$ and $T_{34}$ can be done together as described below. First, one observes that $T_{33}$ can be expressed as
\begin{align*}
T_{33} & :=\sum_{k=1}^d E\int_{0}^t \int_{\kappa(n,s)}^s |e_r^n|^{p-2} \sum_{j=1}^m  \big(\sigma^{(k,j)}(x_r)-\tilde{\sigma}^{n,(k,j)}(r,x_{\kappa(n,r)}^n)\big)dw_r^j
\\
& \qquad \times  \int_{\kappa(n,s)}^s \sum_{l=1}^m \sum_{i=1}^d\frac{\partial b^k(x_{\kappa(n,r)}^n)}{\partial x^i} \tilde{\sigma}^{n,(i,l)}(r, x_{\kappa(n,r)}^n)dw_r^l ds
\\
& = \sum_{k=1}^d E\int_{0}^t \int_{\kappa(n,s)}^s |e_r^n|^{p-2} \sum_{j=1}^m  \big(\sigma^{(k,j)}(x_r)-\tilde{\sigma}^{n,(k,j)}(r, x_{\kappa(n,r)}^n)\big)
\\
& \qquad \times \sum_{i=1}^d\frac{\partial b^k(x_{\kappa(n,r)}^n)}{\partial x^i} \tilde{\sigma}^{n,(i,j)}(r, x_{\kappa(n,r)}^n)dr ds
\end{align*}
which due to Schwartz inequality and Remark \ref{rem:poly:b} yields
\begin{align*}
T_{33} & \leq K E\int_{0}^t \int_{\kappa(n,s)}^s |e_r^n|^{p-2} |\sigma(x_r)-\tilde{\sigma}^{n}(r, x_{\kappa(n,r)}^n)| (1+|x_{\kappa(n,r)}^n|)^{\rho}
\\
& \qquad \times |\tilde{\sigma}^{n}(r, x_{\kappa(n,r)}^n)|dr ds
\end{align*}
for any $t \in [0,T]$. Similarly, $T_{34}$ can be estimated as
\begin{align*}
& T_{34}  :=K \sum_{k=1}^d E\int_{0}^t  \int_{\kappa(n,s)}^s e_r^{n,k} |e_r^n|^{p-4} e_r^n (\sigma(x_r)-\tilde{\sigma}^n(r, x_{\kappa(n,r)}^n))  dw_r
\\
& \qquad \times \int_{\kappa(n,s)}^s \sum_{l=1}^m \sum_{i=1}^d\frac{\partial b^k(x_{\kappa(n,r)}^n)}{\partial x^i}  \tilde{\sigma}^{n,(i,l)}(r, x_{\kappa(n,r)}^n)dw_r^l ds \notag
\\
& =K \sum_{k=1}^d    E\int_{0}^t  \int_{\kappa(n,s)}^s \sum_{j=1}^m \sum_{u=1}^d e_r^{n,k} |e_r^n|^{p-4} e_r^{n,u} (\sigma^{(u,j)}(x_r)-\tilde{\sigma}^{n,(u,j)}(r, x_{\kappa(n,r)}^n))  dw_r^j
\\
& \qquad \times \int_{\kappa(n,s)}^s \sum_{l=1}^m\sum_{i=1}^d \frac{\partial b^k(x_{\kappa(n,r)}^n)}{\partial x^i}  \tilde{\sigma}^{n,(i,l)}(r, x_{\kappa(n,r)}^n)dw_r^l ds \notag
\\
& =K \sum_{k=1}^d    E\int_{0}^t  \int_{\kappa(n,s)}^s \sum_{j=1}^m \sum_{u=1}^d e_r^{n,k} |e_r^n|^{p-4} e_r^{n,u} (\sigma^{(u,j)}(x_r)-\tilde{\sigma}^{n,(u,j)}(r, x_{\kappa(n,r)}^n))
\\
& \qquad \times \sum_{i=1}^d \frac{\partial b^k(x_{\kappa(n,r)}^n)}{\partial x^i}  \tilde{\sigma}^{n,(i,j)}(r, x_{\kappa(n,r)}^n)dr ds \notag
\end{align*}
which on using Remark \ref{rem:poly:b} gives
\begin{align*}
T_{34} & \leq  K E\int_{0}^t \int_{\kappa(n,s)}^s |e_r^n|^{p-2} |\sigma(x_r)-\tilde{\sigma}^{n}(r, x_{\kappa(n,r)}^n)| (1+|x_{\kappa(n,r)}^n|)^{\rho}
\\
& \qquad \times |\tilde{\sigma}^{n}(r, x_{\kappa(n,r)}^n)|dr ds
\end{align*}
for any $t \in [0,T]$. For estimating $T_{33}+T_{34}$, one uses the following splitting,
\begin{align}
\sigma(& x_r)  -\tilde{\sigma}^{n}(r,x_{\kappa(n,r)}^n)=\sigma(x_r) -\sigma^{n}(x_{\kappa(n,r)}^n)- \sigma^{n}_1(r, x_{\kappa(n,r)}^n) \notag
\\
&=(\sigma(x_r)-\sigma(x_r^n)) +(\sigma(x_r^n)-\sigma(x_{\kappa(n,r)}^n)-\sigma_1^{n}(r,x_{\kappa(n,r)}^n)) \notag
\\
& \qquad + (\sigma( x_{\kappa(n,r)}^n)- \sigma^{n}(x_{\kappa(n,r)}^n)) \label{eq:sig:plit}
\end{align}
and hence obtains the following estimates,
\begin{align*}
T_{33}+T_{34}  &\leq  K E\int_{0}^t \int_{\kappa(n,s)}^s |e_r^n|^{p-2}   |\sigma(x_r)-\sigma(x_r^n)| (1+|x_{\kappa(n,r)}^n|)^{\rho}
\\
& \qquad \times |\tilde{\sigma}^{n}(r, x_{\kappa(n,r)}^n)|dr ds
\\
& +K E\int_{0}^t \int_{\kappa(n,s)}^s |e_r^n|^{p-2}  |\sigma(x_r^n)-\sigma(x_{\kappa(n,r)}^n) - \sigma_1^{n}(r, x_{\kappa(n,r)}^n)|
\\
& \qquad \times (1+|x_{\kappa(n,r)}^n|)^{\rho}|\tilde{\sigma}^{n}(r, x_{\kappa(n,r)}^n)|dr ds
\\
&+ K E\int_{0}^t \int_{\kappa(n,s)}^s |e_r^n|^{p-2} |\sigma(x_{\kappa(n,r)}^n)-\sigma^{n}(x_{\kappa(n,r)}^n)|
\\
& \qquad \times (1+|x_{\kappa(n,r)}^n|)^{\rho} |\tilde{\sigma}^{n}(r, x_{\kappa(n,r)}^n)|dr ds
\end{align*}
which also gives the following expressions,
\begin{align*}
T_{33} & +T_{34} \leq  K E\int_{0}^t \int_{\kappa(n,s)}^s (n^\frac{1}{p}|e_r^n|)^{p-1} n^{-\frac{p-1}{p}} (1+ |x_r|+|x_r^n|)^\frac{\rho}{2}
\\
& \qquad \times (1+|x_{\kappa(n,r)}^n|)^{\rho}  |\tilde{\sigma}^{n}(r, x_{\kappa(n,r)}^n)|dr ds
\\
& +K E\int_{0}^t \int_{\kappa(n,s)}^s (n^\frac{1}{p}|e_r^n|)^{p-2}  n^{-\frac{p-2}{p}}|\sigma(x_r^n)-\sigma(x_{\kappa(n,r)}^n) - \sigma_1^{n}(r, x_{\kappa(n,r)}^n)|
\\
& \qquad \times (1+|x_{\kappa(n,r)}^n|)^{\rho}|\tilde{\sigma}^{n}(r, x_{\kappa(n,r)}^n)|dr ds
\\
&+K E\int_{0}^t \int_{\kappa(n,s)}^s (n^\frac{1}{p}|e_r^n|)^{p-2} n^{-\frac{p-2}{p}}|\sigma(x_{\kappa(n,r)}^n)-\sigma^{n}(x_{\kappa(n,r)}^n)|
\\
& \qquad \times (1+|x_{\kappa(n,r)}^n|)^{\rho} |\tilde{\sigma}^{n}(r, x_{\kappa(n,r)}^n)|dr ds
\end{align*}
for any $t \in [0,T]$. Also, on the application of Young's inequality, one obtains
\begin{align*}
T_{33} & +T_{34} \leq  K E\int_{0}^t n\int_{\kappa(n,s)}^s |e_r^n|^p ds
\\
& + K n^{-p+1} E\int_{0}^t \int_{\kappa(n,s)}^s (1+ |x_r|+|x_r^n|)^\frac{\rho p}{2}  (1+|x_{\kappa(n,r)}^n|)^{\rho p}|\tilde{\sigma}^{n}(r, x_{\kappa(n,r)}^n)|^p dr ds
\\
& +K n^{-\frac{p-2}{2}} E\int_{0}^t \int_{\kappa(n,s)}^s   |\sigma(x_r^n)-\sigma(x_{\kappa(n,r)}^n) - \sigma_1^{n}(r, x_{\kappa(n,r)}^n)|^\frac{p}{2}
\\
& \qquad \times (1+|x_{\kappa(n,r)}^n|)^\frac{\rho p}{2}|\tilde{\sigma}^{n}(r, x_{\kappa(n,r)}^n)|^\frac{p}{2} dr ds
\\
&+K n^{-\frac{p-2}{2}} E\int_{0}^t \int_{\kappa(n,s)}^s  |\sigma(r, x_{\kappa(n,r)}^n)-\sigma^{n}(x_{\kappa(n,r)}^n)|^\frac{p}{2}
\\
& \qquad \times (1+|x_{\kappa(n,r)}^n|)^\frac{\rho p}{2} |\tilde{\sigma}^{n}(r,x_{\kappa(n,r)}^n)|^\frac{p}{2} dr ds
\end{align*}
for any $t \in [0,T]$. Moreover, one uses H\"older's inequality to get the following  estimates,
\begin{align*}
& T_{33}  +T_{34} \leq  K E\int_{0}^t \sup_{0 \leq r \leq s} E|e_r^n|^p ds
\\
& + Kn^{-p+1} \int_{0}^t \int_{\kappa(n,s)}^s \{E(1+ |x_r|+|x_r^n|)^{p_0}\}^\frac{p \rho }{2p_0}\{\{E(1+|x_{\kappa(n,r)}^n|)^{p_0}\}^\frac{2\rho p}{2p_0-\rho p}
\\
&\qquad \times \{E|\tilde{\sigma}^n(x_{\kappa(n,r)}^n)|^\frac{2pp_0}{2p_0-3\rho p}\}^\frac{2p_0-3\rho p}{2p_0-\rho p}\}^\frac{2p_0-\rho p}{2p_0}  dr ds
\\
& +K n^{-\frac{p-2}{2}} \int_{0}^t \int_{\kappa(n,s)}^s   \Big\{E|\sigma(x_r^n)-\sigma(x_{\kappa(n,r)}^n) - \sigma_1^{n}(r, x_{\kappa(n,r)}^n)|^{p}\Big\}^\frac{1}{2}
\\
& \qquad \times \Big\{E(1+|x_{\kappa(n,r)}^n|)^{p_0}\}^\frac{\rho p}{p_0} \{E|\tilde{\sigma}^{n}(r, x_{\kappa(n,r)}^n)|^{\frac{pp_0}{p_0-\rho p}}\}^\frac{p_0-\rho p}{p_0}\Big\}^\frac{1}{2} dr ds
\\
&+K n^{-\frac{p-2}{2}} \int_{0}^t \int_{\kappa(n,s)}^s  \Big\{E|\sigma(x_{\kappa(n,r)}^n)-\sigma^{n}(r, x_{\kappa(n,r)}^n)|^p\Big\}^\frac{1}{2}
\\
& \qquad \times \Big\{ \{E(1+|x_{\kappa(n,r)}^n|)^{p_0}\}^\frac{\rho p}{p_0} \{E|\tilde{\sigma}^{n}(r, x_{\kappa(n,r)}^n)|^{\frac{pp_0}{p_0-\rho p}}\}^\frac{p_0-\rho p}{p_0}\Big\}^\frac{1}{2} dr ds
\end{align*}
and then Lemmas [\ref{lem:mbound},  \ref{lem:si-sin:rate}, \ref{lem:sig-sig:rate}] and Corollary \ref{cor:tilde:sig} yield
\begin{align} \label{eq:T33+T34}
T_{33} & +T_{34} \leq K  n^{-p}+ K \int_{0}^t \sup_{0 \leq r \leq s} E |e_r^n|^p ds
\end{align}
for any $t \in [0,T]$. For $T_{35}$, due to \eqref{eq:sig:plit},
\begin{align*}
T_{35}& :=K \sum_{k=1}^d E\int_{0}^t   \int_{\kappa(n,s)}^s |e_r^n|^{p-3}|\sigma(x_r)-\tilde{\sigma}^n(r,x_{\kappa(n,r)}^n)|^2 dr
\\
& \qquad \times \Big|\int_{\kappa(n,s)}^s \sum_{i=1}^d \sum_{l=1}^m \frac{\partial b^k(x_{\kappa(n,r)}^n)}{\partial x^i}  \tilde{\sigma}^{n,(i,l)}(r,x_{\kappa(n,r)}^n)dw_r^l \Big|ds \notag
\\
& \leq K \sum_{k=1}^d E\int_{0}^t   \int_{\kappa(n,s)}^s (n^\frac{1}{p}|e_r^n|)^{p-1}(1+|x_r|+|x_r^n|)^{\rho} dr
\\
&\qquad \times n^{-\frac{p-1}{p}}\Big|\int_{\kappa(n,s)}^s \sum_{l=1}^m \sum_{i=1}^d \frac{\partial b^k(x_{\kappa(n,r)}^n)}{\partial x^i}  \tilde{\sigma}^{n,(i,l)}(r, x_{\kappa(n,r)}^n)dw_r^l \Big|ds \notag
\\
&+ K \sum_{k=1}^d E\int_{0}^t   \int_{\kappa(n,s)}^s (n^\frac{1}{p}|e_r^n|)^{p-3}|\sigma(x_r^n)-\sigma(x_{\kappa(n,r)}^n)- \sigma^n_1(r,x_{\kappa(n,r)}^n)|^2 dr
\\
&\qquad \times n^{-\frac{p-3}{p}}\Big|\int_{\kappa(n,s)}^s \sum_{l=1}^m \sum_{i=1}^d \frac{\partial b^k(x_{\kappa(n,r)}^n)}{\partial x^i}  \tilde{\sigma}^{n,(i,l)}(r, x_{\kappa(n,r)}^n)dw_r^l \Big|ds \notag
\\
&+ K \sum_{k=1}^d E\int_{0}^t   \int_{\kappa(n,s)}^s (n^\frac{1}{p}|e_r^n|)^{p-3}|\sigma(x_{\kappa(n,r)}^n)- \sigma^n(x_{\kappa(n,r)}^n)|^2 dr
\\
&\qquad \times n^{-\frac{p-3}{p}}\Big|\int_{\kappa(n,s)}^s \sum_{l=1}^m \sum_{i=1}^d \frac{\partial b^k(x_{\kappa(n,r)}^n)}{\partial x^i}  \tilde{\sigma}^{n,(i,l)}(r, x_{\kappa(n,r)}^n)dw_r^l \Big|ds \notag
\end{align*}
which on using Young's inequality yields,
\begin{align*}
T_{35} & \leq K  E\int_{0}^t  n \int_{\kappa(n,s)}^s |e_r^n|^{p} ds + K n^{-p+1}\sum_{k=1}^d E\int_{0}^t \int_{\kappa(n,s)}^s (1+|x_r|+|x_r^n|)^{\rho p} dr
\\
&\qquad \times \Big|\int_{\kappa(n,s)}^s \sum_{l=1}^m \sum_{i=1}^d \frac{\partial b^k(x_{\kappa(n,r)}^n)}{\partial x^i}  \tilde{\sigma}^{n,(i,l)}(r, x_{\kappa(n,r)}^n)dw_r^l \Big|^p ds \notag
\\
& + K n^{-\frac{p-3}{3}} \sum_{k=1}^d E\int_{0}^t   \int_{\kappa(n,s)}^s |\sigma(x_r^n)-\sigma(x_{\kappa(n,r)}^n)- \sigma^n_1(r, x_{\kappa(n,r)}^n)|^\frac{2p}{3} dr
\\
&\qquad \times \Big|\int_{\kappa(n,s)}^s \sum_{l=1}^m \sum_{i=1}^d \frac{\partial b^k(x_{\kappa(n,r)}^n)}{\partial x^i}  \tilde{\sigma}^{n,(i,l)}(r, x_{\kappa(n,r)}^n)dw_r^l \Big|^\frac{p}{3} ds \notag
\\
&+ K n^{-\frac{p-3}{3}} \sum_{k=1}^d E\int_{0}^t   \int_{\kappa(n,s)}^s |\sigma(x_{\kappa(n,r)}^n)- \sigma^n(x_{\kappa(n,r)}^n)|^{\frac{2p}{3}} dr
\\
&\qquad \times \Big|\int_{\kappa(n,s)}^s \sum_{l=1}^m \sum_{i=1}^d \frac{\partial b^k(x_{\kappa(n,r)}^n)}{\partial x^i}  \tilde{\sigma}^{n,(i,l)}(r,x_{\kappa(n,r)}^n)dw_r^l \Big|^\frac{p}{3} ds \notag
\end{align*}
and then on applying H\"older's inequality, one obtains
\begin{align*}
& T_{35}  \leq K  \int_{0}^t  \sup_{0 \leq r \leq s} E|e_r^n|^{p} ds
\\
& + K n^{-p+1} \sum_{k=1}^d \int_{0}^t \Big\{ n^{-\frac{p_0}{\rho p}+1}E\int_{\kappa(n,s)}^s (1+|x_r|+|x_r^n|)^{p_0} dr \Big\}^\frac{\rho p}{p_0}
\\
&\qquad \times \Big\{E\Big|\int_{\kappa(n,s)}^s \sum_{l=1}^m \sum_{i=1}^d \frac{\partial b^k(x_{\kappa(n,r)}^n)}{\partial x^i}  \tilde{\sigma}^{n,(i,l)}(r, x_{\kappa(n,r)}^n)dw_r^l \Big|^{\frac{pp_0}{p_0-\rho p}}\Big\}^\frac{p_0-\rho p}{p_0} ds \notag
\\
& + K n^{-\frac{p-3}{3}} \sum_{k=1}^d \int_{0}^t   \Big\{E\Big(\int_{\kappa(n,s)}^s |\sigma(x_r^n)-\sigma(x_{\kappa(n,r)}^n)- \sigma^n_1(r, x_{\kappa(n,r)}^n)|^\frac{2p}{3} dr \Big)^\frac{3}{2}\Big\}^\frac{2}{3}
\\
&\qquad \times \Big\{E\Big|\int_{\kappa(n,s)}^s \sum_{l=1}^m \sum_{i=1}^d \frac{\partial b^k(x_{\kappa(n,r)}^n)}{\partial x^i}  \tilde{\sigma}^{n,(i,l)}(r, x_{\kappa(n,r)}^n)dw_r^l \Big|^{p}\Big\}^\frac{1}{3} ds \notag
\\
&+ K  n^{-\frac{p-3}{3}} \sum_{k=1}^d \int_{0}^t  \Big\{ E\Big(\int_{\kappa(n,s)}^s |\sigma(x_{\kappa(n,r)}^n)- \sigma^n(x_{\kappa(n,r)}^n)|^{\frac{2p}{3}} dr \Big)^\frac{3}{2}\Big\}^\frac{2}{3}
\\
&\qquad \times\Big\{E\Big|\int_{\kappa(n,s)}^s \sum_{l=1}^m \sum_{i=1}^d \frac{\partial b^k(x_{\kappa(n,r)}^n)}{\partial x^i}  \tilde{\sigma}^{n,(i,l)}(r, x_{\kappa(n,r)}^n)dw_r^l \Big|^p\Big\}^\frac{1}{3} ds \notag
\end{align*}
for any $t \in [0,T]$. Further, one uses Remark \ref{rem:poly:b}, an elementary inequality of stochastic integrals and H\"older's inequality to obtain the following estimates,
\begin{align*}
&  T_{35} \leq K  \int_{0}^t  \sup_{0 \leq r \leq s} E|e_r^n|^{p} ds + K  n^{-p} \int_{0}^t \Big\{n^{-\frac{pp_0}{2p_0-2\rho p}+1}
\\
& \qquad \times E\int_{\kappa(n,s)}^s (1+|x_{\kappa(n,r)}^n|)^{\frac{p \rho p_0}{p_0-\rho p}}  |\tilde{\sigma}^{n}(r,x_{\kappa(n,r)}^n)|^{\frac{pp_0}{p_0-\rho p}} dr \Big\}^\frac{p_0-\rho p}{p_0} ds \notag
\\
& + K n^{-\frac{p-3}{3}} \int_{0}^t   \Big\{n^{-\frac{1}{2}}E \int_{\kappa(n,s)}^s |\sigma(x_r^n)-\sigma(x_{\kappa(n,r)}^n)- \sigma^n_1(r,x_{\kappa(n,r)}^n)|^p dr \Big\}^\frac{2}{3}
\\
&\qquad \times \Big\{n^{-\frac{p}{2}+1}E\int_{\kappa(n,s)}^s (1+|x_{\kappa(n,r)}^n|)^{\rho p}  |\tilde{\sigma}^{n}(r,x_{\kappa(n,r)}^n)|^p dr \Big\}^\frac{1}{3} ds \notag
\\
&+ K n^{-\frac{p-3}{3}} \int_{0}^t  \Big\{ n^{-\frac{1}{2}}E\int_{\kappa(n,s)}^s |\sigma(x_{\kappa(n,r)}^n)- \sigma^n(x_{\kappa(n,r)}^n)|^{p} dr \Big\}^\frac{2}{3}
\\
&\qquad \times \Big\{n^{-\frac{p}{2}+1}E\int_{\kappa(n,s)}^s (1+|x_{\kappa(n,r)}^n|)^{\rho p}  |\tilde{\sigma}^{n}(r, x_{\kappa(n,r)}^n)|^p dr \Big\}^\frac{1}{3} ds \notag
\end{align*}
which due to further application of Young's inequality gives,
\begin{align*}
& T_{35}  \leq K  \int_{0}^t  \sup_{0 \leq r \leq s} E|e_r^n|^{p} ds+ K  n^{-p} \int_{0}^t \Big\{n^{-\frac{pp_0}{2p_0-2\rho p}+1}
\\
& \times \int_{\kappa(n,s)}^s \{E(1+|x_{\kappa(n,r)}^n|)^{p_0}\}^{\frac{\rho p}{p_0-\rho p}}  \{E|\tilde{\sigma}^{n}(r,x_{\kappa(n,r)}^n)|^{\frac{pp_0}{p_0-2\rho p}}\}^\frac{p_0-2\rho p}{p_0-\rho p} dr \Big\}^\frac{p_0-\rho p}{p_0} ds \notag
\\
& + K n^{-\frac{p-3}{3}} \int_{0}^t   \Big\{n^{-\frac{1}{2}}E \int_{\kappa(n,s)}^s |\sigma(x_r^n)-\sigma(x_{\kappa(n,r)}^n)- \sigma^n_1(r,x_{\kappa(n,r)}^n)|^p dr \Big\}^\frac{2}{3}
\\
& \times \Big\{n^{-\frac{p}{2}+1}\int_{\kappa(n,s)}^s \{E(1+|x_{\kappa(n,r)}^n|)^{p_0}\}^\frac{\rho p}{p_0}  \{E|\tilde{\sigma}^{n}(r,x_{\kappa(n,r)}^n)|^{\frac{p_0p}{p_0-\rho p}}\}^\frac{p_0-\rho p}{p_0} dr \Big\}^\frac{1}{3} ds \notag
\\
&+ K n^{-\frac{p-3}{3}} \int_{0}^t  \Big\{ n^{-\frac{1}{2}}E\int_{\kappa(n,s)}^s |\sigma(x_{\kappa(n,r)}^n)- \sigma^n(x_{\kappa(n,r)}^n)|^{p} dr \Big\}^\frac{2}{3}
\\
&\times \Big\{n^{-\frac{p}{2}+1}E\int_{\kappa(n,s)}^s \{E(1+|x_{\kappa(n,r)}^n|)^{p_0}\}^\frac{\rho p}{p_0}  \{E|\tilde{\sigma}^{n}(r,x_{\kappa(n,r)}^n)|^{\frac{p_0p}{p_0-\rho p}}\}^\frac{p_0-\rho p}{p_0} dr \Big\}^\frac{1}{3} ds \notag
\end{align*}
and finally on the application of Lemmas [\ref{lem:mbound}, \ref{lem:si-sin:rate}, \ref{lem:sig-sig:rate}] and  Corollary \ref{cor:tilde:sig}, one obtains
\begin{align} \label{eq:T35}
T_{35} \leq  K n^{-p} + K\int_{0}^t  \sup_{0 \leq r \leq s}E|e_r^n|^p   ds
\end{align}
for any $t \in [0,T]$. Hence, on substituting estimates from \eqref{eq:T31}, \eqref{eq:T32}, \eqref{eq:T33+T34} and \eqref{eq:T35} in \eqref{eq:T31+T35}, one obtains
\begin{align} \label{eq:T3}
T_{3} \leq    K n^{-p}+K\int_{0}^t  \sup_{0 \leq r \leq s}E|e_r^n|^p   ds
\end{align}
for any $t \in [0,T]$. Thus, the proof is completed by combining estimates from \eqref{eq:T1}, \eqref{eq:T2} and \eqref{eq:T3} in \eqref{eq:T1+T2+T3}.
\end{proof}
\begin{proof}[\bf  Proof of Theorem \ref{thm:main}]
Let $\bar{b}^n(s):=b(x_s)-b^n(x_{\kappa(n,s)}^n)$ and $\bar{\sigma}^n(s):=\sigma(x_s)-\tilde{\sigma}^n(x_{\kappa(n,s)}^n)$ and then one writes
\begin{align*}
e_t^n:=x_t-x_t^n= \int_0^t \bar{b}^n(s) ds + \int_0^t \bar{\sigma}^n(s)dw_s
\end{align*}
for any $t \in [0,T]$. By the application of It\^o's formula,
\begin{align*}
|e_t^n|^p & =p \int_0^t |e_s^n|^{p-2} e_s^n \bar b^n(s)ds+p \int_0^t |e_s^n|^{p-2} e_s^n \bar \sigma^n(s)dw_s
\\
& +\frac{p(p-2)}{2} \int_0^t |e_s^n|^{p-4} |\bar \sigma^{n*}(s)e_s^n|^2 ds + \frac{p}{2} \int_0^t |e_s^n|^{p-2} |\bar \sigma^{n}(s)|^2 ds
\end{align*}
for any $t \in [0,T]$. As before, when $p=2$ the third term does appear on the right hand side of the above equation and $|e_s^n|^{p-2}$ is absent from the rest of the terms.  Due to Cauchy-Bunyakovsky-Schwartz inequality, one obtains
\begin{align*}
E|e_t^n|^p & \leq  p E\int_0^t |e_s^n|^{p-2} e_s^n \bar b^n(s)ds +\frac{p(p-1)}{2} E \int_0^t |e_s^n|^{p-2} |\bar \sigma^{n}(s)|^2 ds
\end{align*}
for any $t \in [0,T]$. Furthermore, one observes that for $z_1,z_2 \in \mathbb{R}^{d \times m}$, $|z_1+z_2|^2 = |z_1|^2+2\sum_{i=1}^{d}\sum_{j=1}^{m} z_1^{(i,j)} z_2^{(i,j)}+|z_2|^2$, which  on using Young's inequality further implies  $|z_1+z_2|^2 \leq (1+\epsilon)|z_1|^2+(1+1/\epsilon)|z_2|^2$ for every $\epsilon>0$. Let us now fix $\epsilon>0$. Hence, one can use this arguments for estimating $|\sigma(x_s)-\sigma(x_s^n)|^2$ when using the splitting given in equation \eqref{eq:sig:plit}. This along with the splitting of equation \eqref{eq:b:split} gives
\begin{align*}
E|e_t^n|^p & \leq p E\int_0^t |e_s^n|^{p-2} e_s^n\{b(x_s)-b(x_s^n)\}ds
\\
& + p E\int_0^t |e_s^n|^{p-2} e_s^n \{b(x_s^n)-b(x_{\kappa(n,s)}^n)\} ds
\\
& + p E\int_0^t |e_s^n|^{p-2} e_s^n \{b(x_{\kappa(n,s)}^n)-b^n(x_{\kappa(n,s)}^n)\} ds
\\
& +\frac{\epsilon+1}{2}p(p-1) E\int_0^t |e_s^n|^{p-2} |\sigma(x_s)-\sigma(x_s^n)|^2 ds
\\
& + K E\int_0^t |e_s^n|^{p-2} |\sigma(x_s^n)-\sigma(x_{\kappa(n,s)}^n)-\sigma^{n}_1(x_{\kappa(n,s)}^n)|^2 ds
\\
& + K E\int_0^t |e_s^n|^{p-2} |\sigma(x_{\kappa(n,s)}^n)-\sigma^n(x_{\kappa(n,s)}^n)|^2 ds
\end{align*}
for any $t \in [0,T]$. Notice that the constant $K>0$ (a large constant) in the last two terms of the above inequality depends on $\epsilon$. Also, one obtains the following estimates,
\begin{align}
E|e_t^n|^p & \leq \frac{p}{2} E\int_0^t |e_s^n|^{p-2} \Big[e_s^n\{b(x_s)-b(x_s^n)\}+ (1+\epsilon)(p-1) |\sigma(x_s)-\sigma(x_s^n)|^2 \Big] ds \notag
\\
& + p E\int_0^t |e_s^n|^{p-2} e_s^n \{b(x_s^n)-b(x_{\kappa(n,s)}^n)\} ds \notag
\\
& + p E\int_0^t |e_s^n|^{p-2} e_s^n \{b(x_{\kappa(n,s)}^n)-b^n(x_{\kappa(n,s)}^n)\} ds \notag
\\
& + K E\int_0^t |e_s^n|^{p-2} |\sigma(x_s^n)-\sigma(x_{\kappa(n,s)}^n)-\sigma^{n}_1(x_{\kappa(n,s)}^n)|^2 ds \notag
\\
& + K E\int_0^t |e_s^n|^{p-2} |\sigma(x_{\kappa(n,s)}^n)-\sigma^n(x_{\kappa(n,s)}^n)|^2 ds. \notag
\end{align}
for any $t \in [0,T]$. Since $p < p_1$, thus on using Assumption A-\ref{as:sde:lipschitz}, Lemmas [\ref{lem:sig-sig:rate}, \ref{lem:a-tilde a:rate:new}] and Young's inequality, one obtains
\begin{align}
E|e_t^n|^p & \leq K E\int_0^t |e_s^n|^{p}  ds  + Kn^{-p}  + K E\int_0^t |b(x_{\kappa(n,s)}^n)-b^n(x_{\kappa(n,s)}^n)|^p ds \notag
\\
& \quad+ K E\int_0^t  |\sigma(x_{\kappa(n,s)}^n)-\sigma^n(x_{\kappa(n,s)}^n)|^p ds \notag
\end{align}
and hence  Lemmas [\ref{lem:b-bn:rate}, \ref{lem:si-sin:rate}] give
\begin{align}
\sup_{0 \leq s \leq t}E|e_s^n|^p & \leq K \int_0^t \sup_{0 \leq r \leq s}E|e_r^n|^{p}  ds  + Kn^{-p} <\infty
\end{align}
for any $t \in [0,T]$. Finally, the application of Gronwall's lemma completes the proof.
\end{proof}

\end{document}